\definecolor{hrefcolor}{rgb}{0.0,0.5,0.8}
\definecolor{hlgreen}{rgb}{0,0.7,0}
\newenvironment{enumroman}
    {
     
     \begin{enumerate}
    }
    {\end{enumerate}}
\newcounter{remcount}
\newtheorem{theorem}{Theorem}
\newtheorem{lemma}{Lemma}
\newtheorem{proposition}{Proposition}
\theoremstyle{definition}
\newtheorem*{assumption*}{Assumption}
\newtheorem{remark}{Remark}
\newtheorem*{remark*}{Remark}
\newtheorem*{definition*}{Definition}
\newtheorem{example}{Example}
\newtheorem{algorithm}{Algorithm}
\numberwithin{equation}{section}
\numberwithin{lemma}{section}
\numberwithin{theorem}{section}
\numberwithin{proposition}{section}
\numberwithin{definition}{section}
\numberwithin{remark}{section}
\numberwithin{example}{section}
\numberwithin{assumption}{section}
\numberwithin{algorithm}{section}
\numberwithin{corollary}{section}
\newcommand{\term}{\emph}
\newcommand{\field}[1]{\mathbb{#1}}
\newcommand{\R}{\field{R}}
\newcommand{\B}{B}
\newcommand{\C}{\field{C}}
\newcommand{\norm}[1]{\|#1\|}
\newcommand{\abs}[1]{|#1|}
\newcommand{\adaptabs}[1]{\left|#1\right|}
\newcommand{\inv}[1]{#1^{-1}}
\newcommand{\grad}[1]{\nabla #1}
\newcommand{\Union}\bigcup
\newcommand{\Isect}\bigcap
\newcommand{\union}\cup
\newcommand{\isect}\cap
\newcommand{\bigunion}\bigcup
\newcommand{\bigisect}\bigcap
\newcommand{\defeq}{:=}
\newcommand{\downto}{\searrow}
\newcommand{\upto}{\nearrow}
\newcommand{\subdiff}{\partial}
\newcommand{\FF}{\mathcal{F}}
\DeclareMathOperator*{\lip}{lip}
\DeclareMathOperator*{\argmin}{arg\,min}
\DeclareMathOperator{\interior}{int}
\DeclareMathOperator{\Dom}{dom}
\DeclareMathOperator{\graph}{Graph}
\def \uminus@sym{\setbox0=\hbox{$\cup$}\rlap{\hbox 
        to\wd0{\hss\raise0.5ex\hbox{$\scriptscriptstyle{-}$}\hss}}\box0}
    \def \uminus    {\mathrel{\uminus@sym}}
\newcommand{\mathvar}[1]{\textup{#1}}
\newcommand*{\doi}[1]{doi:\detokenize{#1}}
\newcommand{\raisemath}[1]{\mathpalette{\raisem@th{#1}}}
\newcommand{\raisem@th}[3]{\raisebox{#1}{$#2#3$}}
\def\opt#1{\widetilde #1}
\def\realopt#1{\widehat #1}
\def\NL{\mathvar{NL}}
\def\LIN{\mathvar{L}}
\def\Ynl{Y_{\NL}}
\def\Ylin{Y_{\LIN}}
\def\Pnl{P_{\NL}}
\def\barG{{\overline G}}
\def\barF{{\overline F}}
\def\FF{\mathcal{F}}
\newcommand{\setto}{\rightrightarrows}
\newcommand{\varsym}{\bullet}
\newcommand{\alphavec}{{\vec\alpha}}
\def\ee{\text{\textsc{e}}}
\def\extR{\overline \R}
\def\realoptu{{\realopt{u}}}
\def\realoptx{{\realopt{x}}}
\def\realopty{{\realopt{y}}}
\def\nextu{{{u}^{i+1}}}
\def\nextx{{{x}^{i+1}}}
\def\nexty{{{y}^{i+1}}}
\def\thisu{{{u}^i}}
\def\thisx{{{x}^i}}
\def\thisy{{{y}^i}}
\def\thisoptu{{\opt{u}^i}}
\def\thisoptx{{\opt{x}^i}}
\def\thisopty{{\opt{y}^i}}
\def\nextrealoptu{{\realopt{u}^{i+1}}}
\def\thisrealoptu{{\realopt{u}^i}}
\def\overx{x_\OverRelax}
\def\overnextx{x_\OverRelax^{i+1}}
\def\basex{{\bar x}}
\def\baseu{{\bar u}}
\def\basextwo{{\bar x'}}
\def\baseutwo{{\bar u'}}
\def\MMax{\Theta}
\def\MMin{\theta}
\def\Rad{R}
\def\OverRelax{\omega}
\def\lipnum{\ell}
\renewcommand{\lip}[1]{\lipnum_{#1}}
\def\lipopt{\lip{\inv H_\realoptx}}
\def\lipoptF{\lipnum^*}
\def\MCond{\kappa}
\def\Ti{S_i}
\def\EE{E}
\def\KL{K_\LIN}
\def\gammalin{\gamma}
\def\rhoC{r}
\def\yLIN{\varphi}
\def\yNL{\lambda}
\renewcommand{\tilde}{\widetilde}
\newcommand{\iprod}[2]{\langle #1,#2\rangle}
\DeclareMathOperator{\Sym}{Sym}
\newcommand{\Meas}{\mathcal{M}}
\newcommand{\BD}{\partial}
\newcommand{\TGV}{\mathvar{TGV}}
\newcommand{\TV}{\mathvar{TV}}
\def \weaktostar@sym{\setbox0=\hbox{$\rightharpoonup$}\rlap{\hbox 
        to\wd0{\hss\raise1ex\hbox{$\scriptscriptstyle{*\,}$}\hss}}\box0}
    \def \weaktostar    {\mathrel{\weaktostar@sym}}
\begin{document}

\title{A primal-dual hybrid gradient method for non-linear operators with applications to MRI}

\author{Tuomo Valkonen\thanks{
    Department of Applied Mathematics and Theoretical Physics,
    University of Cambridge, United Kingdom.
    \\
    E-mail: \texttt{tuomo.valkonen@iki.fi}.
    }
}

\maketitle

\begin{abstract}
    We study the solution of minimax
    problems $\min_x \max_y G(x) + \iprod{K(x)}{y} - F^*(y)$
    in finite-dimensional Hilbert spaces. The functionals $G$ and
    $F^*$ we assume to be convex, but the operator $K$ we allow to
    be non-linear.
    We formulate a natural extension of the modified primal-dual hybrid 
    gradient method (PDHGM), originally for linear $K$, due to
    Chambolle and Pock. We prove the local convergence of the method,
    provided various technical conditions are satisfied. These include
    in particular the Aubin property of the inverse of a monotone operator
    at the solution. Of particular interest to us is the case
    arising from Tikhonov type regularisation of inverse problems with 
    non-linear forward operators. Mainly we are interested in total variation 
    and second-order total generalised variation priors.
    For such problems, we show that our general local
    convergence result holds when the noise level of the data $f$ is low,
    and the regularisation parameter $\alpha$ is correspondingly small.
    We verify the numerical performance of the 
    method by applying it to problems from magnetic resonance imaging
    (MRI) in chemical engineering and medicine. The specific applications
    are in diffusion tensor imaging (DTI) and MR velocity imaging. 
    These numerical studies show very promising performance.
    
    \paragraph{Mathematics subject classification:}
        49M29, 
        90C26, 
        92C55. 

    \paragraph{Keywords:} 
        primal-dual, 
        non-linear,
        non-convex, 
        convergence, 
        MRI.
    
\end{abstract}

\section{Introduction}

Let us be given convex, proper, lower semicontinuous functionals
$G: X \to \extR$ and $F^*: Y \to \extR$ on finite-dimensional Hilbert
spaces $X$ and $Y$. We then wish to solve the minimax problem
\begin{equation}
    \label{eq:nonlinear-problem-intro}
    \min_x \max_y\ G(x) + \iprod{K(x)}{y} - F^*(y),
\end{equation}
where we allow the operator $K \in C^2(X; Y)$ to be non-linear.
If $K$ were linear, this problem could be solved, among others, 
by the primal-dual method due to Chambolle and Pock \cite{chambolle2010first}. 
In Section \ref{sec:basics} of this paper, we derive two
extensions of the method for non-linear $K$. 

The aforementioned Chambolle-Pock algorithm is an inertial primal-dual backward-backward 
splitting method, classified in \cite{esser2010general} as the modified primal-dual hybrid 
gradient method (PDHGM). It can also seen as a preconditioned ADMM (alternating
directions method of multipliers). In the linear case, for step sizes $\tau,\sigma>0$,
each iteration of the algorithm consists of the updates
\begin{align*}
    \nextx & \defeq (I+\tau \subdiff G)^{-1}(\thisx - \tau K^* y^{i}),
    \\
    \overnextx & \defeq \nextx + \OverRelax(\nextx-\thisx),
    \\
    \nexty & \defeq (I+\sigma \subdiff F^*)^{-1}(\thisy + \sigma K \overnextx).
\end{align*}
The first and last update are the backward (proximal) steps for the primal 
($x$) and dual ($y$) variables, respectively, keeping the other fixed. 
However, the dual step is not taken with the primal variable fixed at $\nextx$
but at the point $\overnextx$. This includes some ``inertia'' or over-relaxation
from the previous iterate $\thisx$, as specified by the parameter $\OverRelax$.
Doing so, the algorithm can to some extend avoid the problem common to first-order
methods that the steps become increasingly shorter. If $G$ or $F^*$ is
uniformly convex, by smartly choosing for each iteration the step length
 parameters $\tau,\sigma$, and the inertia $\OverRelax$, the method can be
shown to have convergence rate $O(1/N^2)$. This is similar to Nesterov's
optimal gradient method \cite{nesterov1983method}. In the general case the
rate is $O(1/N)$. In practise the method has rather good properties on
imaging problems, producing solutions of acceptable visual quality 
relatively quickly.

Our first, simpler extension of the algorithm for non-linear $K$ 
consists of the analogous updates
\begin{align*}
    \nextx & \defeq (I+\tau \subdiff G)^{-1}(\thisx - \tau [\grad K(\thisx)]^* y^{i}),
    \\
    \overnextx & \defeq \nextx + \OverRelax(\nextx-\thisx),
    \\
    \nexty & \defeq (I+\sigma \subdiff F^*)^{-1}(\thisy + \sigma K(\overnextx)).
\end{align*}
The second variant linearises $K(\overnextx)$.
Through a technical analysis in Section \ref{sec:detailed}, we prove 
local convergence of both variants of the method to critical points $(\realoptx, \realopty)$ 
of the system \eqref{eq:nonlinear-problem-intro}. To do this, in addition 
to trivial conditions familiar from the linear case, we need two non-trivial
estimates. For one, defining the set-valued map
\begin{equation}
    \notag
    H_\realoptx(x, y) \defeq
        \begin{pmatrix}
            \subdiff G(x) + \grad K(\realoptx)^* y \\
            \subdiff F^*(y) -\grad K(\realoptx) x - c_\realoptx,
        \end{pmatrix},
    \quad
    \text{where}
    \quad
    c_\realoptx \defeq \grad K(\realoptx)\realoptx-K(\realoptx),
\end{equation}
we require that the inverse $\inv H_\realoptx$ is
pseudo-Lipschitz \cite{aubin1990sva}, a condition
also known as the Aubin property \cite{rockafellar-wets-va}.
The second, more severe, estimate is that the dual variable
$\realopty$ has to be small in the range of the non-linear 
part of $K$. In Section \ref{sec:lipschitz}, we study the 
satisfaction of these estimates for $G$, $F^*$ and $K$ of 
forms most relevant to image processing applications
that we study numerically in Section \ref{sec:appl}.

Problems of the form \eqref{eq:nonlinear-problem-intro} with non-linear
$K$ arise, for instance, from various inverse problems in magnetic resonance 
imaging (MRI). As a motivating example, we introduce the following problem 
from velocity-encoded MRI. Other applications include the modelling of
the Stejskal-Tanner equation in diffusion tensor imaging (DTI).
We will discuss this application in more detail in Section \ref{sec:appl}.
In velocity-encoded MRI, we seek to reconstruct a complex image
$v=\rhoC \exp(i\varphi) \in L^1(\Omega; \C)$ from sub-sampled $k$-space
(Fourier transform) data $f$. In this application we are chiefly interested in
the  phase $\varphi$, and eventually the difference of phases of two
suitably acquired images, as the velocity of an imaged fluid can be
encoded into the phase difference \cite{Holland2010}.
Let us denote by $S$ the sub-sampling operator, and by $\FF$ 
the Fourier transform.
We observed in \cite{tuomov-phaserec} that instead of, let's say, 
defining total variation ($\TV$) for complex-valued functions $v$ 
similarly to vector-valued functions, and solving
\begin{equation}
    \label{eq:phase-recon-lin}
    \min_v \frac{1}{2}\norm{f-S\FF v}^2 + \alpha \TV(v),
\end{equation}
it may be better to regularise $\rhoC$ and $\varphi$ differently.
This leads us to the problem
\begin{equation}
    \label{eq:phase-recon}
    \min_{\rhoC, \varphi} \frac{1}{2}\norm{f-T(\rhoC, \varphi)} + \alpha_\rhoC R_\rhoC(\rhoC) + \alpha_\varphi R_\varphi(\varphi).
\end{equation}
Here $R_\rhoC$ and $R_\varphi$ are suitable regularisation functionals
for the amplitude map $\rhoC$ and phase map $\varphi$, respectively, of a complex
image $v=\rhoC \exp(i\varphi)$. Correspondingly, we define the operator $T$ by
\[
    T(\rhoC, \varphi) \defeq S\FF\bigl(x \mapsto \rhoC(x) \exp(i\varphi(x))\bigr).
\]
Observe that we may rewrite
\[
    \frac{1}{2}\norm{f-T(\rhoC,\varphi)}^2
    =
    \max_\lambda\Bigl( \iprod{T(\rhoC, \varphi)}{\lambda}
        - \iprod{f}{\lambda}
        - \frac{1}{2}\norm{\lambda}^2\Bigr).
\]
Generally also the regularisation terms can be written in terms of an
indicator function and a bilinear part in the form
\[
    \alpha_\rhoC R_\rhoC(\rhoC) = \max_{\psi}\, \iprod{K_\rhoC \rhoC}{\psi} - \delta_{C_\rhoC}(\psi).
\]
In case of total variation regularisation, $R_\rhoC(\rhoC)=\TV(\rhoC)$, we
have $C_\rhoC=\{\psi \mid \sup_x \norm{\varphi(x)} \le \alpha_\rhoC\}$ and $K_\rhoC=\grad$.
With these transformations, the problem \eqref{eq:phase-recon} 
can be written in the form \eqref{eq:nonlinear-problem-intro} with $G \equiv 0$,
\[
    K(\rhoC, \varphi)=(T(\rhoC, \varphi), K_\rhoC \rhoC, K_\varphi \varphi),
\]
and
\[
    F^*(\lambda, \psi_r, \psi_\varphi)=\iprod{f}{\lambda} + \frac{1}{2}\norm{\lambda}^2
        + \delta_{C_\rhoC}(\psi_r) + \delta_{C_\rhoC}(\psi_\varphi).
\]
Observe that $F^*$ is strongly convex in the range of the non-linear
part of $K$, corresponding to $T$. Under exactly this kind of structural
assumptions, along with strict complementarity and non-degeneracy
assumptions from the solution, we can show in Section \ref{sec:lipschitz} 
that $\inv H_\realoptx$ possesses the Aubin property required for the 
general convergence theorem, Theorem \ref{thm:conv}, to hold. 
Moreover, in this case the condition on $\realopty$ being small in
the non-linear range of $K$ corresponds to $\norm{f-T(\realopt\rhoC, \realopt\varphi)}$ 
being small. This can be achieved under low noise and a small regularisation parameter.

Computationally \eqref{eq:phase-recon} is significantly more demanding than
\eqref{eq:phase-recon-lin}, as it is no longer a convex problem due to the
non-linearity of $T$. One option for locally solving problems of the
form \eqref{eq:phase-recon} is the Gauss-Newton scheme.
In this, one linearises $T$ at the current iterate, solves the resulting
convex problem, and repeats until a stopping criterion is satisfied. 
Computationally such schemes combining inner and outer iterations are
expensive, unless one can solve the inner iterations to a very low accuracy.
Moreover, the Gauss-Newton scheme is not guaranteed to
converge even locally -- a fact that we did occasionally observe when
performing the numerical experiments for Section \ref{sec:appl}. 
The scheme is however very  useful when combined with iterative 
regularisation, and behaves in that case well for almost linear $K$ 
\cite{blaschke1997convergence,schuster2012regularization}.
It can even be combined with Bregman iterations for contrast 
enhancement \cite{bachmayr2009iterative}.
Unfortunately, our operators of interest are not almost linear.
Another possibility for the numerical solution of \eqref{eq:phase-recon}
would be an infeasible semismooth Newton method,
along the lines of \cite{hintermuller2006infeasible}, extended to
non-linear operators. However, second-order methods quickly become
prohibitively expensive as the image size increases, unless one
can employ domain decomposition techniques -- something that to
our knowledge has not yet been done for semismooth Newton methods
relevant to total variation type regularisation.
Based on this, we find it desirable to start developing more efficient 
and provably convergent methods for non-convex problems on large 
data sets. We now study one possibility.

\section{The basics}
\label{sec:basics}

We describe the proposed method, Algorithm \ref{algorithm:nl-cp},
in Section \ref{sec:proposed} below. To begin its analysis, we study 
in Section \ref{sec:linearised} the application of the Chambolle-Pock
method to linearisations of our original
problem \eqref{eq:nonlinear-problem-intro}. We then derive in 
Section \ref{sec:basic-est} basic descent estimates that motivate a 
general convergence result, Theorem \ref{thm:conv-idea}, stated in 
Section \ref{sec:idea}. This result will form the basis of the proof
of convergence of Algorithm \ref{algorithm:nl-cp}. 
Our task in the  following Section \ref{sec:detailed} will be to to 
derive the estimates required by Theorem \ref{thm:conv-idea}. 
We follow the theorem with a collection of remarks in 
Section \ref{sec:basics-remarks}.

\subsection{The proposed method}
\label{sec:proposed}

Let $X$ and $Y$ be finite-dimensional Hilbert spaces. Suppose
we are given two convex, proper, lower-semicontinuous functionals
$G: X \to \extR$ and $F^*: X \to \extR$, and a possibly non-linear
operator $K \in C^2(X; Y)$. 
We are interested  in solving the problem
\begin{equation}
    \label{eq:nonlinear-problem}
    \tag{P}
    \min_x \max_y\, G(x) + \iprod{K(x)}{y} - F^*(y).
\end{equation}
The first-order optimality conditions
for $(\hat x, \hat y)$ to solve \eqref{eq:nonlinear-problem} 
may be formally derived as
\begin{subequations}
\label{eq:nl-oc}
\begin{align}
    -[\grad K(\hat x)]^* \hat y & \in \subdiff G(\hat x), 
    \\
    K(\hat x) & \in \subdiff F^*(\hat y).
\end{align}
\end{subequations}
Under a constraint qualification, which is satisfied for example
when $G$ is $C^1$ and either $[\grad K(x)]^*$ has empty nullspace or $\Dom F=X$, 
these conditions can be seen to be necessary; cf. \cite[10.8]{rockafellar-wets-va}.
For linear $K$ in particular, the conditions are necessary and reduce to
\begin{subequations}
\label{eq:lin-oc}
\begin{align}
    -K^* \hat y & \in \subdiff G(\hat x), 
    \\
    K \hat x & \in \subdiff F^*(\hat y).
\end{align}
\end{subequations}
The modified primal-dual hybrid gradient method (PDHGM) due to \citet{chambolle2010first},
solves this problem by iterating for $\sigma, \tau > 0$ satisfying
$\sigma\tau\norm{K}^2 < 1$ the system
\begin{subequations}
\label{eq:cp-lin}
\begin{align}
    \nextx & \defeq (I+\tau \subdiff G)^{-1}(\thisx - \tau K^* y^{i}),
    \\
    \overnextx & \defeq \nextx + \OverRelax(\nextx-\thisx),
    \\
    \nexty & \defeq (I+\sigma \subdiff F^*)^{-1}(\thisy + \sigma K \overnextx).
\end{align}
\end{subequations}
As such, the method is closely related to a large class
of methods including in particular the Uzawa method and the alternating direction 
method of multipliers (ADMM). For an overview, we recommend \cite{esser2010general}. 

In case the reader is wondering, the order of the primal ($x$) and dual ($y$)
updates in \eqref{eq:cp-lin} is reversed from the original presentation 
in \cite{chambolle2010first}. The reason is that reordered the updates can,
as discovered in \cite{he2012convergence}, be easily written in a proximal 
point form. We will exploit this. Indeed, \eqref{eq:cp-lin} already contains 
two proximal point sub-problems, specifically the computation of the resolvents
$(I+\tau \subdiff G)^{-1}$ and $(I+\sigma \subdiff F^*)^{-1}$. We recall that they may be written
as
\[
    (I+\tau \subdiff G)^{-1}(x) 
    = \argmin_{x'}\left\{\frac{\norm{x'-x}^2}{2\tau} + G(x')\right\}.
\]
For the good performance of \eqref{eq:cp-lin}, it is crucial that 
these sub-problems can be solved efficiently. Usually in applications,
they turn out to be simple projections or linear operations.
Resolvents reducing to small pointwise quadratic semidefinite 
problems have also been studied \cite{ipmsproc,tuomov-scaleproj-report}.

Observe the correspondence between the (merely necessary) optimality 
conditions \eqref{eq:nl-oc} for the problem \eqref{eq:nonlinear-problem}
with non-linear $K$ and the optimality conditions \eqref{eq:lin-oc} for 
the linear case. It suggests that we could obtain a numerical method
for solving \eqref{eq:nl-oc} by replacing the applications
$K^* \thisy$ and $K\overnextx$ in \eqref{eq:cp-lin}
by $[\grad K(\thisx)]^* \thisy$ and $K(\overnextx)$.
We would thus linearise the dual application, but keep the primal
application non-linear. We do exactly that and propose 
the following method.

\begin{algorithm}[Exact NL-PDHGM]
    \label{algorithm:nl-cp}
    Choose $\OverRelax \ge 0$ and $\tau, \sigma > 0$.
    Repeat the following steps until a convergence criterion is satisfied.
    \begin{subequations}
    \label{eq:nl-cp}
    \begin{align}
        \label{eq:nl-cp1}
        \nextx & \defeq (I+\tau \subdiff G)^{-1}(\thisx - \tau [\grad K(\thisx)]^* y^{i}),
        \\
        \overnextx & \defeq \nextx + \OverRelax(\nextx-\thisx),
        \\
        \label{eq:nl-cp3}
        \nexty & \defeq (I+\sigma \subdiff F^*)^{-1}(\thisy + \sigma K(\overnextx)).
    \end{align}
    \end{subequations}
\end{algorithm}

In practise we require $\OverRelax=1$. 
Exact conditions on the step length parameters $\tau$ and $\sigma$ will be 
derived in Section \ref{sec:detailed} along the course of the proof of 
local convergence; in the numerical experiments of Section \ref{sec:appl}, 
we make $\tau$ and $\sigma$ depend on the iteration, choosing 
$\tau^i$ and $\sigma^i$ to satisfy
\[
    \sigma^i \tau^i \left(\sup_{k=1,\ldots,i} \norm{\grad{K(x^k)}}^2\right) < 1
\]
with the ratio $\sigma^i/\tau^i$ unaltered. We discuss the justification for this kind of 
strategies in Remark \ref{rem:steplength} after the convergence proof.

We will base the convergence proof on the following fully linearised 
method, where we replace the application $K(\overnextx)$ 
also by a linearisation. 



\begin{algorithm}[Linearised NL-PDHGM]
    \label{algorithm:nl-cp-lin}
    Choose $\OverRelax \ge 0$ and $\tau, \sigma > 0$.
    Repeat the following steps until a convergence criterion is satisfied.
    \begin{subequations}
    \label{eq:nl-cp-lin}
    \begin{align}
        \label{eq:nl-cp1-lin}
        \nextx & \defeq (I+\tau \subdiff G)^{-1}(\thisx - \tau [\grad K(\thisx)]^* y^{i}),
        \\
        \overnextx & \defeq \nextx + \OverRelax(\nextx-\thisx),
        \\
        \label{eq:nl-cp3-lin}
        \nexty & \defeq (I+\sigma \subdiff F^*)^{-1}(\thisy + \sigma[K(\thisx)+\grad K(\thisx)(\overnextx-\thisx)]).
    \end{align}
    \end{subequations}
\end{algorithm}

In numerical practise, as we will see in Section \ref{sec:appl}, the convergence
rate of both variants of the algorithm is the same. Algorithm \ref{algorithm:nl-cp}
is however faster in terms of computational time, as it needs less operations per iteration
in the evaluation of $K(\overnextx)$ versus $K(\thisx)+\grad K(\thisx)(\overnextx-\thisx)$.

\subsection{Linearised problem and proximal point formulation}
\label{sec:linearised}

To start the convergence analysis of Algorithm \ref{algorithm:nl-cp},
we study the application of the standard Chambolle-Pock
method \eqref{eq:cp-lin} to linearisations of problem
\eqref{eq:nonlinear-problem} at a base point $\basex \in X$.
Specifically, we define
\[
    K_\basex \defeq \grad K(\basex),
    \quad
    \text{and}
    \quad
    c_\basex \defeq K(\basex) - K_\basex \basex.
\]
Then we consider
\begin{equation}
    \label{eq:linearised-problem}
    \min_x \max_y G(x) + \iprod{c_\basex+K_\basex x}{y} - F^*(y).
\end{equation}
This problem is of the form required by the method \eqref{eq:cp-lin}, 
if we write $F_\basex^*(y)=F^*(y)-\iprod{c_\basex}{y}$.
Indeed, we may write the updates \eqref{eq:cp-lin} for this problem as
\begin{subequations}
\label{eq:nl-lin-cp}
\begin{align}
    \label{eq:nl-lin-cp1}
    \nextx & \defeq (I+\tau \subdiff G)^{-1}(\thisx - \tau K_\basex^* y^{i}),
    \\
    \overnextx & \defeq \nextx + \OverRelax(\nextx-\thisx),
    \\
    \label{eq:nl-lin-cp3}
    \nexty & \defeq (I+\sigma \subdiff F^*)^{-1}(\thisy + \sigma (c_\basex + K_\basex \overnextx)).
\end{align}
\end{subequations}
Observe how \eqref{eq:nl-cp3-lin} corresponds to \eqref{eq:nl-lin-cp3} with $\basex=\thisx$.

From now on we use the general notation
\[
    u=(x, y),
\]
and define
\[
    H_\basex(u) \defeq
        \begin{pmatrix}
            \subdiff G(x) + K_\basex^* y \\
            \subdiff F^*(y) -K_\basex x - c_\basex
        \end{pmatrix}
\]
as well as
\[
    M_\basex \defeq
        \begin{pmatrix}
            I/\tau & -K_\basex^* \\
            -\OverRelax K_\basex & I/\sigma
        \end{pmatrix}.
\]
With these operators, $0 \in H_\basex(\realoptu)$ characterises solutions
$\realoptu$ to \eqref{eq:linearised-problem}, and $\nextu$ computed 
by \eqref{eq:nl-lin-cp} is according to \cite{he2012convergence}
characterised as the unique solution to the proximal point problem 
\begin{equation}
    \label{eq:prox-update}
    0 \in H_\basex(\nextu) + M_\basex(\nextu-\thisu).
\end{equation}
%
In fact, returning to the original problem \eqref{eq:nonlinear-problem},
the optimality conditions \eqref{eq:nl-oc} may be written
\[
    0 \in H_\realoptx(\realoptu),
\]
and \eqref{eq:prox-update} with $\basex=\thisx$ characterises the 
update \eqref{eq:nl-cp-lin} of Linearised NL-PDHGM (Algorithm \ref{algorithm:nl-cp-lin}). 
For the update \eqref{eq:nl-cp} of Exact NL-PDHGM (Algorithm \ref{algorithm:nl-cp}),
we derive the characterisation
\begin{equation}
    \label{eq:nl-pseudoprox-update}
    0 \in H_{\thisx}(\nextu) + D_{\thisx}(\nextu) + M_{\thisx}(\nextu-\thisu)
\end{equation}
with
\begin{equation}
    \label{eq:dbasex}
    D_{\basex}(x, y) \defeq 
        \begin{pmatrix}
            0 \\
            K_\basex \overx + c_\basex - K(\overx)
        \end{pmatrix}
        =
        \begin{pmatrix}
            0 \\
            K(\basex) + \grad K(\basex)(\overx -\basex) - K(\overx)
        \end{pmatrix}
\end{equation}
and $\overx \defeq x + \OverRelax(x - \basex)$.
We therefore study next basic estimates that can be obtained 
from \eqref{eq:prox-update} with an additional general discrepancy term $\nu^i$.
These form the basis of our convergence proof.

\subsection{Basic descent estimate}
\label{sec:basic-est}

We now fix $\OverRelax=1$ in order to force $M_\basex$ symmetric,
and study solutions $\nextu$ to the general system
\begin{equation}
    \label{eq:prox-update-discrepancy}
    0 \in H_{\basex}(\nextu) + \nu^i + M_{\basex}(\nextu-\thisu).
\end{equation}
In Lemma \ref{lemma:descent} below, we show that $\nextu$ is better
than $\thisu$ in terms of distance to the ``perturbed
local solution'' $\thisoptu$ solving $0 \in H_\basex(\thisoptu) + \nu^i$.
Here we use the word \emph{perturbation} to refer to $\nu^i$, and 
\emph{local} to refer to the linearisation point $\basex$.
Observe that $\thisoptu$ depends on both $\thisu$ and $\nextu$ in case of
Algorithm \ref{algorithm:nl-cp}, resp.~\eqref{eq:nl-pseudoprox-update}.
In Section \ref{sec:detailed} we will lessen these dependencies, and
convert the statement to be in terms of local (unperturbed) optimal 
solutions $\thisrealoptu$, satisfying $0 \in H_\basex(\thisrealoptu)$.

For the statement of the lemma, we use the notation
\[
    \iprod{a}{b}_M \defeq \iprod{a}{Mb},
    \quad
    \text{and}
    \quad
    \norm{a}_M \defeq \sqrt{\iprod{a}{a}_M},
\]
and denote by $P_V$ the linear projection operator into a subspace $V$ of $Y$.
We also say that $F^*$ is strongly convex on the subspace $V$ with constant $\gamma > 0$ if
\[
    F^*(y')-F^*(y) \ge \iprod{z}{y'-y} + \frac{\gamma}{2}\norm{P_V(y'-y)}^2
    \quad
    \text{for all } y, y' \in Y \text{ and } z \in \subdiff F^*(y).
\]
This is equivalent to saying that the operator $\subdiff F^*$ is strongly monotone 
on $V$ in the sense that
\[
    \iprod{\subdiff F^*(y') - \subdiff F^*(y)}{y'-y}
    \ge
     \frac{\gamma}{2}\norm{P_V(y'-y)}^2
    \quad
    \text{for all } y, y' \in Y.
\]

\begin{lemma}
    \label{lemma:descent}
    Fix $\OverRelax=1$.
    Let $\thisu \in X \times Y$ and $\basex \in X$.
    Suppose $\nextu \in X \times Y$ solves \eqref{eq:prox-update-discrepancy} 
    for some $\nu^i \in X \times Y$, and that $\thisoptu \in X \times Y$ is 
    a solution to
    \begin{equation}
        \label{eq:optu-di}
        0 \in H_\basex(\thisoptu) + \nu^i.
    \end{equation}
    Then
    \begin{equation}
        \label{eq:linear-descent}
        \tag{$\mathrm{\opt{D}^2}$-loc}
        \norm{\thisu-\thisoptu}_{M_\basex}^2
        \ge \norm{\nextu-\thisu}_{M_\basex}^2
            + \norm{\nextu-\thisoptu}_{M_\basex}^2.
    \end{equation}
    If $F^*$ is additionally strongly convex on a subspace $V$ of $Y$ 
    with constant $\gamma > 0$, then we have
    \begin{equation}
        \label{eq:linear-descent-strong}
        \tag{$\mathrm{\opt{D}^2}$-loc-$\gamma$}
        \norm{\thisu-\thisoptu}_{M_\basex}^2
        \ge \norm{\nextu-\thisu}_{M_\basex}^2
            + \norm{\nextu-\thisoptu}_{M_\basex}^2
            + \frac{\gamma}{2}\norm{P_V(\nexty-\thisopty)}^2.
    \end{equation}
\end{lemma}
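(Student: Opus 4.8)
The plan is to read \eqref{eq:prox-update-discrepancy} and \eqref{eq:optu-di} as two inclusions for the single operator $H_\basex$ and to combine them through its monotonicity, exploiting that $\OverRelax=1$ renders $M_\basex$ symmetric. First I would verify that $H_\basex$ is monotone. It decomposes as the block-diagonal map $(x,y)\mapsto(\subdiff G(x),\subdiff F^*(y))$, which is monotone because $G$ and $F^*$ are convex, plus the skew-adjoint linear part sending $(x,y)$ to $(K_\basex^* y,-K_\basex x)$, whose contribution to $\iprod{H_\basex(u)-H_\basex(u')}{u-u'}$ is $\iprod{K_\basex^*(y-y')}{x-x'}-\iprod{K_\basex(x-x')}{y-y'}=0$, plus the constant $-c_\basex$, which drops out of differences. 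When $F^*$ is strongly convex on $V$, the same computation yields the sharper estimate $\iprod{H_\basex(u)-H_\basex(u')}{u-u'}\ge\frac{\gamma}{2}\norm{P_V(y-y')}^2$, since only the $\subdiff F^*$ block carries the surplus.

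Next I would rewrite the hypotheses as $-\nu^i-M_\basex(\nextu-\thisu)\in H_\basex(\nextu)$ and $-\nu^i\in H_\basex(\thisoptu)$, and feed these two selections into monotonicity evaluated at the pair $(\nextu,\thisoptu)$. The perturbations $\nu^i$ cancel, leaving
\[
    \iprod{-M_\basex(\nextu-\thisu)}{\nextu-\thisoptu}\ge \tfrac{\gamma}{2}\norm{P_V(\nexty-\thisopty)}^2,
\]
with the right-hand side replaced by $0$ in the plain convex case. The central identity is then the polarization (three-point) rule for the symmetric bilinear form $\iprod{\freevar}{\freevar}_{M_\basex}$, namely $\iprod{\nextu-\thisu}{\nextu-\thisoptu}_{M_\basex}=\tfrac12\bigl(\norm{\nextu-\thisu}_{M_\basex}^2+\norm{\nextu-\thisoptu}_{M_\basex}^2-\norm{\thisu-\thisoptu}_{M_\basex}^2\bigr)$, which holds for any symmetric $M_\basex$; positive definiteness is not needed, as $\norm{\freevar}_{M_\basex}^2$ is here only shorthand for $\iprod{\freevar}{M_\basex\freevar}$. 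Substituting this identity into the displayed inequality and rearranging immediately gives \eqref{eq:linear-descent}, and in the strongly convex case the surplus term survives to yield \eqref{eq:linear-descent-strong}.

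I expect the only point needing genuine care to be the monotonicity bookkeeping: confirming that the skew-adjoint coupling contributes exactly zero, and that the strong-convexity surplus attaches to the correct variable $P_V(\nexty-\thisopty)$ with the constant as stated — tracking this coefficient through the factor $\tfrac12$ in the polarization identity is where an off-by-two is easiest to introduce. Everything else, namely the cancellation of $\nu^i$ and the final rearrangement, is routine. I would also flag explicitly that symmetry of $M_\basex$, equivalently $\OverRelax=1$, is precisely what licenses the polarization identity, so this hypothesis is used once and crucially.
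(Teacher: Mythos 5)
Your proof is correct and follows essentially the same route as the paper's: monotonicity of $H_\basex$ applied to the two inclusions (with $\nu^i$ cancelling), followed by quadratic expansion of the symmetric $M_\basex$-form — your single polarization identity merely packages the paper's step-by-step expansion, and in fact delivers the strong-convexity surplus with constant $\gamma$ rather than the weaker $\gamma/2$ stated in the lemma, which of course still implies \eqref{eq:linear-descent-strong}. Your explicit verification that $H_\basex$ is monotone via the skew-adjoint splitting is a detail the paper takes for granted.
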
    
\begin{proof}
    Since the operator $H_\basex$ is monotone, we have
    \begin{equation}
        \label{eq:h-monotonicity-iterate}
        \iprod{(H_\basex(\nextu)+\nu^i)-(H_\basex(\thisoptu)+\nu^i)}{\nextu-\thisoptu} 
        =
        \iprod{H_\basex(\nextu)-H_\basex(\thisoptu)}{\nextu-\thisoptu} 
        \ge 0.
    \end{equation}
    It thus follows from \eqref{eq:prox-update-discrepancy}, \eqref{eq:optu-di},
    and the symmetricity of $M_\basex$ that
    \[
        0 \ge \iprod{\nextu-\thisoptu}{\nextu-\thisu}_{M_\basex} = \iprod{\nextu-\thisu}{\nextu-\thisoptu}_{M_\basex}.
    \]
    Consequently
    \begin{equation}
        \label{eq:initial-descent-estimate}
        \begin{split}
        0
        &
        \ge \norm{\nextu-\thisu}_{M_\basex}^2
             + \iprod{\thisu-\thisoptu}{\nextu-\thisu}_{M_\basex}
        \\
        &
        = \norm{\nextu-\thisu}_{M_\basex}^2
            - \norm{\thisu-\thisoptu}_{M_\basex}^2
             + \iprod{\thisu-\thisoptu}{\nextu-\thisoptu}_{M_\basex},
        \\
        &
        = \norm{\nextu-\thisu}_{M_\basex}^2
            - \norm{\thisu-\thisoptu}_{M_\basex}^2
            + \norm{\nextu-\thisoptu}_{M_\basex}^2
             + \iprod{\thisu-\nextu}{\nextu-\thisoptu}_{M_\basex},
        \\
        &
        \ge \norm{\nextu-\thisu}_{M_\basex}^2
            - \norm{\thisu-\thisoptu}_{M_\basex}^2
            + \norm{\nextu-\thisoptu}_{M_\basex}^2.
        \end{split}
    \end{equation}
    This yields \eqref{eq:linear-descent}.
    The strong convexity estimate \eqref{eq:linear-descent-strong} is proved analogously,
    using the fact that instead of \eqref{eq:h-monotonicity-iterate}, we have the stronger
    estimate
    \[
        \iprod{H_\basex(\nextu)-H_\basex(\thisoptu)}{\nextu-\thisoptu} 
        \ge \frac{\gamma}{2}\norm{P_V(\nexty-\thisopty)}^2.
        \qedhere
    \]
\end{proof}

Following \cite{konnov2003proximal}, see also \cite{rockafellar1976monotone},
if $\nu^i=0$, then it is not difficult to show from \eqref{eq:linear-descent} the 
convergence of the iterates $\{\thisu\}_{i=0}^\infty$ generated by \eqref{eq:prox-update-discrepancy} 
to a solution 
of the linearised problem \eqref{eq:linearised-problem}.
The estimate \eqref{eq:linear-descent} also forms the basis of our proof of
local convergence of Algorithm \ref{algorithm:nl-cp} and Algorithm \ref{algorithm:nl-cp-lin}.
However, we have to improve upon it to take into account 
that $\basex=\thisx$ changes on each iteration in \eqref{eq:nl-cp}, and
that the dual update in \eqref{eq:nl-cp3} is not linearised.
The consequence of these changes is that also the weight operator
$M_\thisx$ of the local norm $\norm{\cdot}_{M_\thisx}$ changes
on each iteration, as do the local perturbed solution $\thisoptu$
and the local (unperturbed) solution $\thisrealoptu$.
Taking these differences into account, it turns out that the correct 
estimate that we have to derive is \eqref{eq:nonlinear-descent-general}
in the next theorem. There we have improved \eqref{eq:linear-descent} 
by these changes and additionally, due to proof-technical reasons, 
by the removal of the squares on the norms.


\subsection{Idea of convergence proof}
\label{sec:idea}

\begin{theorem}
    \label{thm:conv-idea}
    Suppose that the operator $K \in C^1(X; Y)$ and the constants
    $\sigma,\tau >0$ satisfy for some $\MMax > \MMin > 0$ the bounds
    \begin{equation}
        \label{eq:m-bounds}
        \tag{C-M}
        \MMin^2 I \le M_{\thisx} \le \MMax^2 I,
        \quad (i=1,2,3,\ldots).
    \end{equation}
    Let the sequence $\{\thisu\}_{i=1}^\infty$ solve
    \eqref{eq:prox-update-discrepancy}
    for $\basex=\thisx$ and some $\{\nu^i\}_{i=1}^\infty \subset X \times Y$ satisfying
    \begin{equation}
        \label{eq:conv-idea-di}
        \tag{C-$\nu^i$}
        \lim_{i \to \infty} \nu^i = 0.
    \end{equation}
    Suppose, moreover, that for some constant $\zeta > 0$ and points $\{\thisrealoptu\}_{i=1}^\infty$ 
    we have the estimate
    \begin{equation}
        \label{eq:nonlinear-descent-general}
        \tag{$\mathrm{\realopt{D}}$}
        \norm{\thisu-\thisrealoptu}_{M_{\thisx}}
        \ge \zeta \norm{\nextu-\thisu}_{M_{\thisx}}
            + \norm{\nextu-\nextrealoptu}_{M_{\nextx}}.
    \end{equation}
    Then the iterates $\thisu \to \realoptu$ for some $\realopt u=(\realopt x, \realopt y)$
    that solves \eqref{eq:nl-oc}.
\end{theorem}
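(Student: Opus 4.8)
The plan is to treat \eqref{eq:nonlinear-descent-general} as a one-dimensional recursion in the scalars $a_i \defeq \norm{\thisu - \thisrealoptu}_{M_{\thisx}}$ and to extract from it both monotonicity and summability. Writing also $b_i \defeq \norm{\nextu - \thisu}_{M_{\thisx}}$, the estimate reads $a_i \ge \zeta b_i + a_{i+1}$, since the third term $\norm{\nextu - \nextrealoptu}_{M_{\nextx}}$ is exactly $a_{i+1}$. As $\zeta > 0$ and $b_i \ge 0$, the sequence $\{a_i\}$ is nonincreasing and bounded below by zero, hence convergent; telescoping $a_i - a_{i+1} \ge \zeta b_i$ over $i = 1, \dots, N$ gives $\zeta \sum_{i=1}^N b_i \le a_1 - a_{N+1} \le a_1$, so that $\sum_{i=1}^\infty b_i < \infty$.

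Next I would pass from summability in the varying local norms $\norm{\freevar}_{M_{\thisx}}$ to summability in the ambient Euclidean norm. The lower bound $\MMin^2 I \le M_{\thisx}$ in \eqref{eq:m-bounds} gives $\MMin \norm{\nextu - \thisu} \le b_i$, whence $\sum_i \norm{\nextu - \thisu} \le \MMin^{-1} \sum_i b_i < \infty$. A sequence with summable consecutive differences is Cauchy, and as $X \times Y$ is finite-dimensional it converges: $\thisu \to \realoptu$ for a limit $\realoptu = (\realoptx, \realopty)$. In particular $\thisx \to \realoptx$, $\nexty \to \realopty$, and $\nextu - \thisu \to 0$.

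The remaining task is to show that $\realoptu$ satisfies \eqref{eq:nl-oc}, equivalently $0 \in H_{\realoptx}(\realoptu)$, by taking limits in the defining inclusion \eqref{eq:prox-update-discrepancy} with $\basex = \thisx$. Unwinding the block structure of $H_{\thisx}$, that inclusion furnishes subgradients $\xi^i \in \subdiff G(\nextx)$ and $\eta^i \in \subdiff F^*(\nexty)$ with $\xi^i + [\grad K(\thisx)]^* \nexty = -(\nu^i)_X - [M_{\thisx}(\nextu - \thisu)]_X$ and $\eta^i - \grad K(\thisx)\nextx - c_{\thisx} = -(\nu^i)_Y - [M_{\thisx}(\nextu - \thisu)]_Y$, where the subscripts $X, Y$ denote the two components. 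The right-hand sides tend to zero: $\nu^i \to 0$ by \eqref{eq:conv-idea-di}, while $\norm{M_{\thisx}(\nextu - \thisu)} \le \MMax^2 \norm{\nextu - \thisu} \to 0$ by the upper bound in \eqref{eq:m-bounds}. Since $K \in C^1$, both $\grad K(\thisx) \to \grad K(\realoptx)$ and $c_{\thisx} \to c_{\realoptx}$, so $\xi^i \to -[\grad K(\realoptx)]^* \realopty$ and $\eta^i \to \grad K(\realoptx)\realoptx + c_{\realoptx} = K(\realoptx)$. Finally, the closedness of the graphs of $\subdiff G$ and $\subdiff F^*$ (valid as $G$ and $F^*$ are convex, proper, lower semicontinuous), combined with $\nextx \to \realoptx$ and $\nexty \to \realopty$, yields $-[\grad K(\realoptx)]^*\realopty \in \subdiff G(\realoptx)$ and $K(\realoptx) \in \subdiff F^*(\realopty)$, which are precisely \eqref{eq:nl-oc}.

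The telescoping and the Cauchy step are routine; the real care lies in the final limit passage, which is also where I expect the main obstacle. Three limiting effects must be controlled at once inside a set-valued inclusion: the linearisation point $\thisx$ drifts (absorbed by the $C^1$-continuity of $\grad K$, and hence of the shift $c_{\thisx}$), the proximal weight $M_{\thisx}$ simultaneously varies and multiplies $\nextu - \thisu$ (absorbed by the two-sided bound \eqref{eq:m-bounds}, the upper half forcing $M_{\thisx}(\nextu - \thisu) \to 0$), and the subgradient selections $\xi^i, \eta^i$ must converge to admissible subgradients at the limit, which rests on the outer semicontinuity of $\subdiff G$ and $\subdiff F^*$ rather than on any pointwise continuity. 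I would emphasise that the perturbed solutions $\thisrealoptu$ enter only through \eqref{eq:nonlinear-descent-general} and vanish from the limit argument entirely: the whole force of that hypothesis is the single scalar recursion $a_i \ge \zeta b_i + a_{i+1}$ it produces.
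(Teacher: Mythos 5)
Your proposal is correct and follows essentially the same route as the paper: telescoping \eqref{eq:nonlinear-descent-general} to get summability of the steps, converting to the ambient norm via \eqref{eq:m-bounds} to obtain a Cauchy sequence, and then passing to the limit in \eqref{eq:prox-update-discrepancy} using the outer semicontinuity of $\subdiff G$ and $\subdiff F^*$ together with $K \in C^1$. The only cosmetic difference is that you unwind the block structure of $H_{\thisx}$ explicitly where the paper phrases the same limit passage via the outer limit $\limsup_i H_{x^i}(\nextu) \subset H_{\realoptx}(\realoptu)$.
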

\begin{proof}
    %
    It follows from \eqref{eq:nonlinear-descent-general} that
    \[
        \sum_{i=1}^\infty \norm{\nextu-\thisu}_{M_{\thisx}} < \infty.
    \]
    Consequently, an application of \eqref{eq:m-bounds} shows that
    \begin{equation}
        \sum_{i=1}^\infty \norm{\nextu-\thisu}
        \le \MMax \sum_{i=1}^\infty \norm{\nextu-\thisu}_{M_{\thisx}} < \infty.
    \end{equation}
    This says that $\{\thisu\}_{i=1}^\infty$ is a Cauchy sequence, and hence
    converges to some $\realoptu$.
    
    
    Let
    \[
        z^i \defeq \nu^i + M_{\thisx}(\nextu-\thisu).
    \]
    Since $\nu^i \to 0$ by \eqref{eq:conv-idea-di}, and
    \[
        \norm{M_{\thisx}(\nextu-\thisu)} \le \MMax \norm{\nextu-\thisu} \to 0,
    \]
    it follows that $z_i \to 0$.
    By \eqref{eq:prox-update-discrepancy}, we moreover have $-z^i \in H_{x^{i}}(\nextu)$.
    Using $K \in C^1(X; Y)$ and the outer semicontinuity of the subgradient mappings
    $\subdiff G$ and $\subdiff F^*$, we see that
    \[
        \limsup_{i \to \infty} H_{x^{i}}(\nextu) \subset H_{\realoptx}(\realoptu).
    \]
    Here the $\limsup$ is in the sense of an outer limit \cite{rockafellar-wets-va},
    consisting of the limits of all converging subsequences of elements $v^i \in H_{x^{i}}(\nextu)$.
    As by \eqref{eq:prox-update-discrepancy} we have $-z^i \in H_{x^{i}}(\nextu)$,
    it follows in particular that $0 \in H_{\realoptx}(\realoptu)$.
    This says says precisely that \eqref{eq:nl-oc} holds.
\end{proof}

\subsection{A few remarks}
\label{sec:basics-remarks}

\begin{remark}[Reference points]
    Observe that we did not need to assume the reference points
    $\{\thisrealoptu\}_{i=1}^\infty$ in the above proof to solve anything.
    We did not even need to assume boundedness, which follows from
    \eqref{eq:nonlinear-descent-general} and \eqref{eq:m-bounds}.
\end{remark}

\begin{remark}[Gauss-Newton]
    Let $\nu^i=-M_{\thisx}(\nextu-\thisu)$ and $\basex=\thisx$.
    In fact, we can even replace $M_{\thisx}$ by the identity $I$. 
    Then $\nextu$ solves the local linearised 
    problem \eqref{eq:linearised-problem}, that is
    \[
        0 \in H_{\thisx}(\nextu),
    \]
    and Lemma \ref{lemma:descent} together with
    Theorem \ref{thm:conv-idea} show the convergence of the Gauss-Newton method
    to a critical point of \eqref{eq:nonlinear-problem}, provided $\nu^i \to 0$.
    That is, either the iterates diverge, or the Gauss-Newton method converges
    to a solution.
\end{remark}

\begin{remark}[Varying over-relaxation parameter]
    We have assumed that $\OverRelax=1$. It is however possible to
    accommodate a varying parameter $\OverRelax^i \to 1$ through $\nu^i$.
    In particular, one can easily show the convergence (albeit merely sublinear)
    of the accelerated algorithm of \cite{chambolle2010first} this way.
    In this variant, dependent on the strong convexity of $F$ or $G$, 
    one updates the parameters at each step as $\OverRelax^i = 1/\sqrt{1+2\gamma\tau^i}$,
    $\tau^{i+1}=\OverRelax^i \tau_i$, and, $\sigma^{i+1}=\sigma^i/\tau^i$
    for $\gamma$ the factor of strong convexity of either $F^*$ or $G$.
\end{remark}

\begin{remark}[Interpolated PDHGM for non-linear operators]
    \label{remark:nl-cp-interp}
    Our analysis, forthcoming and to this point, applies to a yet further variant of
    Algorithm \ref{algorithm:nl-cp}. Here we replace $K(\overnextx)$
    in \eqref{eq:nl-cp3} by
    \[
        (1+\OverRelax)K(\nextx)-\OverRelax K(\thisx) = K(\thisx)+(1+\OverRelax)(K(\nextx)-K(\thisx)) \approx K(\overnextx).
    \]
    For this method
    \[
        \nu^i 
        = \bigl(0, \grad K(\thisx)(\overnextx-\thisx) - (1+\OverRelax)(K(\nextx)-K(\thisx))\bigr)
        =(1+\OverRelax)D_{\thisx}(\nextx).
    \]
\end{remark}

\begin{remark}[An alternative update]
    It is also interesting to consider using $[\grad K(\overx^i)]^*$ instead of
    $[\grad K(\thisx)]^*$ in \eqref{eq:nl-cp1}. From the point of view of the
    convergence proof, this however introduces major difficulties, as 
    ($\nextx, \nexty$) no longer depends on just ($\thisx, \thisy$),
    but also on $x^{i-1}$ through $\overx^i$. This kind of dependence 
    also makes analysis using the original ordering of the PDHGM updates 
    in \cite{chambolle2010first} difficult, but is avoided by the reordering
    due to \cite{he2012convergence} that we employ in \eqref{eq:cp-lin} and \eqref{eq:nl-cp}.
\end{remark}

\section{Detailed analysis of the non-linear method}
\label{sec:detailed}

We now proceed to verifying the assumptions of Theorem \ref{thm:conv-idea}
for Algorithm \ref{algorithm:nl-cp} and Algorithm \ref{algorithm:nl-cp-lin},
provided our initial iterate is close enough to a solution which satisfies 
certain technical conditions, to be derived along the course of the proof.
We will begin with the formal statement of our running assumptions in 
Section \ref{sec:ass}, after which we prove some auxiliary results 
in Section \ref{sec:aux}.
Our first task in verifying the assumptions of Theorem \ref{thm:conv-idea} 
is to show that the discrepancy term $\nu^i = D_{\thisx}(\nextu) \to 0$.
This we do in Section \ref{sec:discrepancy}. 
Then in Section \ref{sec:local-norms} we begin deriving the estimate
\eqref{eq:nonlinear-descent-general} by analysing the switch to 
the new local norm at the next iterate. In Section \ref{sec:aubin} 
we introduce and study Lipschitz type estimates on $\inv H_\realoptx$.
We then then use these in Section \ref{sec:rm-squares} and 
Section \ref{sec:bridging}, respectively, to remove the squares from 
the estimate \eqref{eq:linear-descent-strong} and to bridge
from one local solution to the next one.
The Lipschitz type estimates themselves we will derive
in Section \ref{sec:lipschitz} to follow. We state and prove 
our main convergence theorem, combining all the above-mentioned 
estimates, in Section \ref{sec:combined-estimate}. This we 
follow by a collection of remarks in Section \ref{sec:details-remarks}.

\subsection{General assumptions}
\label{sec:ass}

We take $G: X \to \extR$ and $F^*: Y \to \extR$ 
to be convex, proper, lower semicontinuous functionals
on finite-dimensional Hilbert spaces $X$ and $Y$,
satisfying $\interior \Dom G, \interior \Dom F^* \ne \emptyset$.
We fix $\OverRelax=1$ and study the sequence of iterates 
generated by solving
\begin{equation}
    \notag
    0 \in \Ti(\nextu) \defeq H_{\thisx}(\nextu) + D^i(\nextu) + M_{\thisx}(\nextu-\thisu),
\end{equation}
where we expect the discrepancy functional
\[
    D^i: X \times Y \to \{0\} \times Y,\quad (i=1,2,3,\ldots),
\]
to satisfy for any fixed $i$, $C, \epsilon > 0$, 
the existence of $\rho > 0$ such that
\begin{equation}
    \tag{A-$\mathrm{D}^i$}
    \label{eq:ass-di}
    \norm{D^i(u)} \le \epsilon\norm{\thisx-x},
    \quad (\norm{\thisx-x} \le \rho, \norm{\thisu} \le C).
\end{equation}
For brevity, we denote
\[
    \nu^i \defeq D^i(\nextu).
\]
We then aim to take $\thisoptu$ as a solution of the perturbed linearised 
problem
\[
    0 \in H_{\thisx}(\thisoptu) + \nu^i,
\]
and $\thisrealoptu$ as a solution of the linearised problem
\[
    0 \in H_{\thisx}(\thisrealoptu).
\]
Note that these solutions do not necessarily exist.
We will later prove that they exist near a solution $\realoptu$
for which $\inv H_\realoptx$ satisfies the Aubin property.
As Theorem \ref{thm:conv-idea} makes no particular
requirement on $\thisrealoptu$, we will begin with 
$\thisoptu$ and $\thisrealoptu$ arbitrary elements 
of $X \times Y$, and state the above inclusions
explicitly when we require or have them.

Regarding the operator $K: X \to Y$ and the step length
parameters $\sigma, \tau > 0$, we require that
\begin{equation}
    \label{eq:ass-k}
    \tag{A-K}
    K \in C^2(X; Y)
    \quad
    \text{and}
    \quad
    \sigma\tau\left(\sup_{\norm{x} \le \Rad}\norm{\grad K(x)}^2\right) < 1.
\end{equation}
\begin{subequations}
\label{eq:constants}
Here we fix $\Rad > 0$ such that there exists a solution $\realoptu$ to
\begin{equation}
    \label{eq:rad}
    0 \in H_{\realoptx}(\realoptu) \quad \text{with} \quad \norm{\realoptu} \le \Rad/2.
\end{equation}
We then denote by $L_2$ the Lipschitz factor of $x \mapsto \grad K(x)$ on the
closed ball $\B(0, \Rad) \subset X$, namely
\begin{equation}
    \label{eq:ltwo}
    L_2 := \sup_{\norm{x} \le \Rad} \norm{\grad^2 K(x)}
\end{equation}
in operator norm. By \eqref{eq:ass-k}, the supremum is bounded.
Finally, we define the ``linear'' and ``non-linear'' subspaces
\[
    \Ylin \defeq  \{ z \in Y \mid \text{the map } x \mapsto \iprod{z}{K(x)} \text{ is linear} \},
    \quad
    \text{and}
    \quad
    \Ynl \defeq \Ylin^\perp,
\]
and denote by $\Pnl$ the orthogonal projection into $\Ynl$.

\subsection{Auxiliary results}
\label{sec:aux}

The assumption \eqref{eq:ass-k} guarantees in particular that
the weight operators $M_{x^i}$ are uniformly bounded, as we
state in the next lemma. For $\MMax$ and $\MMin$ as in the lemma, 
we also define the uniform condition number
\begin{equation}
    \label{eq:mcond}
    \MCond \defeq \MMax/\MMin.
\end{equation}
Observe that $\MCond \to 1$ as $\tau,\sigma \to 0$.

\begin{lemma}
    \label{lemma:m}
    Suppose \eqref{eq:ass-k} holds. Then
    there exist $\MMax \ge \MMin > 0$ such that
    \begin{equation}
        \label{eq:m-bounds-lemma}
        \MMin^2 I \le M_{x} \le \MMax^2 I,
        \quad
        (\norm{x} \le \Rad).
    \end{equation}
\end{lemma}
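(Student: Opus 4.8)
The plan is to exploit the fact that, with $\OverRelax=1$, the weight operator
\[
    M_x = \begin{pmatrix} I/\tau & -K_x^* \\ -K_x & I/\sigma \end{pmatrix},
    \qquad K_x \defeq \grad K(x),
\]
is \emph{symmetric}, so the claimed two-sided bound is simply the assertion that its spectrum lies in a fixed interval $[\MMin^2,\MMax^2] \subset (0,\infty)$, uniformly over $\norm{x} \le \Rad$. First I would reduce the operator bound to a scalar estimate by expanding the quadratic form on a generic test vector $w=(\xi,\eta) \in X \times Y$:
\[
    \iprod{M_x w}{w}
    = \frac{1}{\tau}\norm{\xi}^2 + \frac{1}{\sigma}\norm{\eta}^2 - 2\iprod{K_x \xi}{\eta}.
\]
The only term that is not manifestly sign-definite is the cross term, which I would control by Cauchy--Schwarz together with the uniform bound
\[
    L_1 \defeq \sup_{\norm{x} \le \Rad} \norm{\grad K(x)} < \infty,
\]
finite because $\grad K$ is continuous (as $K \in C^2$) and the closed ball $\B(0,\Rad)$ is compact. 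This gives $\abs{2\iprod{K_x \xi}{\eta}} \le 2 L_1 \norm{\xi}\norm{\eta}$, so that $\iprod{M_x w}{w}$ is sandwiched between the two quadratic forms in $(\norm{\xi},\norm{\eta})$ associated with the $2\times 2$ coefficient matrices $\begin{pmatrix} 1/\tau & \mp L_1 \\ \mp L_1 & 1/\sigma \end{pmatrix}$.

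Next, I would read off the (common) eigenvalues of these $2\times 2$ matrices,
\[
    \lambda_\pm = \frac{1}{2}\Bigl(\frac{1}{\tau}+\frac{1}{\sigma}\Bigr) \pm \sqrt{\frac{1}{4}\Bigl(\frac{1}{\tau}-\frac{1}{\sigma}\Bigr)^2 + L_1^2},
\]
and set $\MMin^2 \defeq \lambda_-$ and $\MMax^2 \defeq \lambda_+$. The upper bound $\iprod{M_x w}{w} \le \MMax^2\norm{w}^2$ is then immediate and, notably, independent of the step-length condition. The decisive point is $\MMin^2 > 0$: since the trace $1/\tau+1/\sigma$ is automatically positive, both eigenvalues are positive exactly when the determinant $\tfrac{1}{\sigma\tau}-L_1^2$ is positive, i.e. when $\sigma\tau L_1^2 < 1$, which is precisely the step-length hypothesis in \eqref{eq:ass-k}. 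This is where that assumption enters, and it is the crux of the lemma: positivity of $M_x$ is equivalent to the step-length restriction, just as in the linear Chambolle--Pock theory.

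Because $L_1$, $\sigma$, $\tau$ are constants independent of $x$, the resulting bounds $\MMin^2,\MMax^2$ are uniform on $\norm{x}\le\Rad$, which is the claim. I do not expect a genuine obstacle here: the whole argument is a uniform $2\times 2$ eigenvalue estimate, equivalent to checking positivity of a Schur complement. The only point requiring a moment's care is to observe that the cross-term bound relies on the \emph{operator-norm} bound $\norm{K_x}\le L_1$ holding uniformly in $x$, which is exactly where compactness of the ball and continuity of $\grad K$ are used; everything else is the standard reduction of a symmetric saddle-point weight matrix to its determinant-positivity condition.
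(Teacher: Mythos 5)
Your proof is correct and is essentially the fully written-out version of the paper's argument: the paper simply asserts that the bounds follow from the boundedness of $\grad K$ on $\B(0,\Rad)$, while you supply the standard details (Cauchy--Schwarz on the cross term, reduction to the $2\times 2$ eigenvalue/determinant condition, with positivity of the lower bound coming from the step-length hypothesis $\sigma\tau L_1^2<1$ in \eqref{eq:ass-k}). No gap; the approaches coincide.
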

\end{subequations}
\begin{proof}
    This follows immediately from the fact that $\grad K$ is bounded 
    on $\B(0, \Rad)$.
\end{proof}

In case of Algorithm \ref{algorithm:nl-cp-lin}, showing 
\eqref{eq:ass-di}
is trivial because $\nu^i=0$. For Algorithm \ref{algorithm:nl-cp}, we show
this in the next lemma.

\begin{lemma}
    \label{lemma:discrepancy-estimate}
    Suppose \eqref{eq:ass-k} holds, and let $D^i = D_{\thisx}$,
    i.e., suppose $\{\thisu\}_{i=1}^\infty$ is generated by
    Algorithm \ref{algorithm:nl-cp}. 
    Then \eqref{eq:ass-di} holds.
    The same is the case with $D^i = 0$, i.e., Algorithm \ref{algorithm:nl-cp-lin}.
\end{lemma}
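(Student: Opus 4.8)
The plan is to recognise the only nonzero (second) component of $D^i(u) = D_{\thisx}(x,y)$ as the first-order Taylor remainder of $K$ at $\thisx$, and to bound it by the square of the increment $\norm{\thisx-x}$. From \eqref{eq:dbasex} with $\basex = \thisx$, we have
\[
    D^i(u) = \bigl(0,\ K(\thisx) + \grad K(\thisx)(\overx - \thisx) - K(\overx)\bigr),
\]
so that $\norm{D^i(u)}$ is exactly the norm of the remainder of the affine approximation of $K$ about $\thisx$, evaluated at the over-relaxed point $\overx$. Since $\OverRelax = 1$, this point satisfies $\overx - \thisx = (1+\OverRelax)(x - \thisx) = 2(x - \thisx)$, so the displacement driving the remainder is fully controlled by $\norm{x - \thisx}$.

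First I would invoke Taylor's theorem in integral form, which for $K \in C^2(X;Y)$ gives the second-order bound
\[
    \norm{K(\thisx) + \grad K(\thisx)(\overx - \thisx) - K(\overx)}
    \le \frac{1}{2}\Bigl(\sup_{z \in [\thisx, \overx]}\norm{\grad^2 K(z)}\Bigr)\norm{\overx - \thisx}^2.
\]
Combining this with $\norm{\overx - \thisx} = 2\norm{x - \thisx}$ yields $\norm{D^i(u)} \le 2 L \norm{x - \thisx}^2$, where $L$ bounds the Hessian along the segment $[\thisx, \overx]$. Because the hypothesis of \eqref{eq:ass-di} fixes $\norm{\thisu} \le C$ and $\norm{\thisx - x} \le \rho$, both endpoints lie in the fixed ball $\B(0, C + 2\rho)$, and $C^2$-smoothness furnishes a finite Hessian bound $L = L(C)$ there; this reduces to the constant $L_2$ of \eqref{eq:ltwo} whenever the relevant points remain inside $\B(0, \Rad)$.

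It then remains only to absorb one factor of the increment into $\epsilon$. Given fixed $i$, $C$, and $\epsilon > 0$, I would set $\rho := \min\{1,\ \epsilon/(2L(C))\}$, so that for all $u=(x,y)$ with $\norm{\thisx - x} \le \rho$ and $\norm{\thisu} \le C$,
\[
    \norm{D^i(u)} \le 2 L(C)\,\norm{x - \thisx}^2 \le \epsilon\, \norm{\thisx - x},
\]
which is precisely the estimate \eqref{eq:ass-di}. The case $D^i = 0$ corresponding to Algorithm \ref{algorithm:nl-cp-lin} is then immediate, as the required bound holds with any $\rho$.

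The only delicate point is keeping the Hessian bound uniform over the admissible $u$: since $C$ is prescribed in advance, the points $\thisx$ and $\overx$ are confined to a fixed bounded ball on which $C^2$-regularity supplies a local Lipschitz constant for $\grad K$, and the quadratic decay of the Taylor remainder then dominates the linear right-hand side once $\rho$ is taken small enough. Beyond this bookkeeping, the argument is a direct application of the second-order Taylor estimate, so I do not anticipate any substantial obstruction.
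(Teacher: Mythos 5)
Your proof is correct and follows essentially the same route as the paper's: the paper likewise isolates the Taylor remainder $Q_{\thisx}(x) = K(\thisx) + \grad K(\thisx)(x-\thisx) - K(x)$, bounds it by $L\norm{x-\thisx}^2$ using $K \in C^2$ on a bounded set, accounts for the over-relaxation factor $(1+\OverRelax)$ in the displacement, and then shrinks $\rho$ to absorb one factor of the increment into $\epsilon$. Your explicit integral-form Taylor constant and the choice $\rho = \min\{1, \epsilon/(2L(C))\}$ are just a more concrete instance of the same argument.
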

\begin{proof}
    We recall the definition of $D_\basex$ from \eqref{eq:dbasex}, and notice
    that
    \[
        D_\basex(x,y)=(0, Q_\basex(x+\OverRelax(x-\basex)))
    \]
    for
    \[
        Q_\basex(x) \defeq K(\basex) + \grad K(\basex)(x-\basex) - K(x).
    \]
    To show \eqref{eq:ass-di}, we observe that
    thanks to $K$ being twice continuously differentiable, 
    for any $C > 0$ there exist $L > 0$ and $\rho_1 > 0$ such that
    \[
        \norm{Q_\basex(x)} \le L \norm{x-\basex}^2,
        \quad (\norm{x-\basex} \le \rho_1,\, \norm{\basex} \le C).
    \]
    In particular, minding that
    \[
        [\nextx+\OverRelax(\nextx-\thisx)]-\thisx=(1+\OverRelax)(\nextx-\thisx),
    \]
    setting $\rho_2 \defeq \rho_1/(1+\OverRelax)$, we find
    \[
        \norm{D_{\thisx}(\nextu)}
        \le L(1+\OverRelax)^2 \norm{\nextx-\thisx}^2,
        \quad
        (\norm{\nextx-\thisx} \le \rho_2,\, \norm{\thisx} \le C).
    \]
    Choosing $\rho \in (0, \rho_2)$ small enough, 
    \eqref{eq:ass-di} follows.
\end{proof}

We will occasionally use the auxiliary mapping $\EE$ defined in the following 
lemma. The motivation behind it is that if
$v \in H_{\basextwo}(u)$ then $v + \EE(u; \baseu, \baseutwo) \in H_{\basex}(u)$. 

\begin{lemma}
    \label{lemma:e-term}
    Suppose \eqref{eq:ass-k} holds. Let $\baseu, \baseutwo$ satisfy
    $\norm{\baseu}, \norm{\baseutwo} \le \Rad$. Define
    \begin{equation}
        \label{eq:def-e}        
        \EE(u) \defeq 
            \EE(u; \baseu, \baseutwo)
            \defeq
            \begin{pmatrix}
            ( K_{\basex} - K_{\basextwo})^* y \\
            (K_{\basextwo} - K_{\basex}) x + c_\basextwo - c_\basex
            \end{pmatrix}.
    \end{equation}
    Then $\EE$ is Lipschitz with Lipschitz factor $\lip \EE \le L_2 \norm{\baseu-\baseutwo}$, and
    \begin{equation}
        \label{eq:e-estim0}
        \norm{\EE(u)} \le L_2 \norm{\basex - \basextwo}
            \bigl(
                \norm{\Pnl y} + \norm{x-\basextwo} + \norm{\basex - \basextwo}
            \bigr).
    \end{equation}
\end{lemma}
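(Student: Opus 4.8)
The plan is to exploit the fact that $\EE$ is an \emph{affine} map in $u=(x,y)$, whose linear part is the off-diagonal block operator $(x,y) \mapsto \bigl((K_\basex - K_\basextwo)^* y,\ (K_\basextwo - K_\basex)x\bigr)$. Since on the product Hilbert space $X \times Y$ we have $\norm{(x,y)}^2 = \norm{x}^2 + \norm{y}^2$, this operator has norm $\norm{K_\basex - K_\basextwo} = \norm{\grad K(\basex) - \grad K(\basextwo)}$, the two blocks being $\pm(K_\basex - K_\basextwo)$ up to adjoints. As $\grad K$ is Lipschitz on $\B(0,\Rad)$ with constant $L_2$ by \eqref{eq:ltwo}, and $\norm{\basex}, \norm{\basextwo} \le \Rad$, this is at most $L_2\norm{\basex - \basextwo} \le L_2 \norm{\baseu - \baseutwo}$, which is the Lipschitz claim $\lip\EE \le L_2\norm{\baseu - \baseutwo}$.

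For the sharper pointwise bound \eqref{eq:e-estim0} I would estimate the two components of $\EE(u)$ separately and combine them through $\norm{\EE(u)} \le \sqrt{\norm{\EE(u)_1}^2 + \norm{\EE(u)_2}^2} \le \norm{\EE(u)_1} + \norm{\EE(u)_2}$. The first component is $(K_\basex - K_\basextwo)^* y$. The key observation, and the one place where the subspaces $\Ylin, \Ynl$ enter, is that for any $z \in \Ylin$ the map $x \mapsto \iprod{z}{K(x)}$ is affine, so its gradient $[\grad K(x)]^* z$ is independent of $x$; hence $(K_\basex - K_\basextwo)^* z = 0$ for every $z \in \Ylin$. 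Writing $y = \Pnl y + P_\Ylin y$ therefore collapses the first component to $(K_\basex - K_\basextwo)^*\Pnl y$, bounded by $L_2\norm{\basex - \basextwo}\,\norm{\Pnl y}$.

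For the second component $(K_\basextwo - K_\basex)x + c_\basextwo - c_\basex$, I would use the identity $c_\basex + K_\basex x = K(\basex) + K_\basex(x-\basex)$ to recognise it as the difference of the first-order Taylor polynomials of $K$ at $\basextwo$ and at $\basex$, each evaluated at $x$. This difference is affine in $x$ with constant slope $K_\basextwo - K_\basex$, of norm at most $L_2\norm{\basex - \basextwo}$, so it suffices to control its value at the convenient point $x = \basextwo$. There it equals $K(\basextwo) - K(\basex) - K_\basex(\basextwo - \basex)$, the first-order Taylor remainder of $K$ at $\basex$, which by the bound \eqref{eq:ltwo} on $\grad^2 K$ has norm at most $\tfrac{L_2}{2}\norm{\basex - \basextwo}^2$ (the segment joining $\basex$ and $\basextwo$ lying in $\B(0,\Rad)$ by convexity). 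Expanding the affine map about $\basextwo$ then gives the bound $L_2\norm{\basex - \basextwo}\,\norm{x - \basextwo} + \tfrac{L_2}{2}\norm{\basex - \basextwo}^2$ for the second component, and summing with the first yields \eqref{eq:e-estim0}, in fact with the slightly better constant $\tfrac12$ in front of the last term.

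I expect the only genuinely delicate step to be the first component: one must notice that the \emph{difference} $K_\basex - K_\basextwo$ of the two linearisations annihilates $\Ylin$, which is precisely what permits replacing $y$ by its nonlinear projection $\Pnl y$. Without this reduction the estimate would carry the full $\norm{y}$ and be useless in the later application. The Lipschitz bound and the second-component estimate are then routine Taylor expansions, once one chooses to expand the difference of linearisations about the base point $\basextwo$ rather than about $x$ or $\basex$.
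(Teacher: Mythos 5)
Your proposal is correct and follows essentially the same route as the paper: the Lipschitz factor comes from the operator norm of the off-diagonal linear part together with $\norm{K_\basex-K_\basextwo}\le L_2\norm{\basex-\basextwo}$, and the pointwise bound comes from rewriting the second component as $(K_\basextwo-K_\basex)(x-\basextwo)$ plus the Taylor remainder $Q_\basex(\basextwo)$ and observing that $(K_\basex-K_\basextwo)^*$ annihilates $\Ylin$ so that $y$ may be replaced by $\Pnl y$. Your constant $L_2/2$ on the quadratic term is marginally sharper than the paper's $L_2$, but the argument is otherwise identical.
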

\begin{proof}
    The fact that $\EE$ is Lipschitz with the claimed factor is easy to see; 
    indeed
    \[
        \EE(u)-\EE(u') = \begin{pmatrix}
            ( K_{\basex} - K_{\basextwo})^* (y-y') \\
            (K_{\basextwo} - K_{\basex})  (x-x')
            \end{pmatrix}.
    \]
    Since \eqref{eq:ass-k} holds and $\norm{\basex}, \norm{\basextwo} \le \Rad$, 
    as we have assumed, we have
    \begin{equation}
        \label{eq:k-estim}
        \norm{K_\basex-K_{\basextwo}} \le L_2 \norm{\basex-\basextwo},
    \end{equation}
    It follows that
    \[
        \norm{\EE(u)-\EE(u')} \le L_2 \norm{\basex-\basextwo} \norm{u-u'}.
    \]
    That is, the claimed Lipschitz estimate holds.
    
    Regarding \eqref{eq:e-estim0}, we may rewrite
    \[
        \EE(u)=
        \begin{pmatrix}
            (K_{\basex} - K_{\basextwo})^* y \\
            (K_{\basextwo} - K_{\basex}) (x - \basextwo) - Q_{\basex}(\basextwo)
        \end{pmatrix}.
    \]
    As in the proof of Lemma \ref{lemma:discrepancy-estimate}, the quantity
    \[
        Q_{\basex}(\basextwo) 
        = (K_{\basex} - K_{\basextwo}) \basextwo + c_{\basex} - c_{\basextwo} \\
        = K(\basex) + \grad K(\basex)(\basextwo-\basex) - K(\basextwo) 
    \]
    satisfies for some $L_2'>0$ that
    \[
        \norm{Q_{\basex}(\basextwo)} \le L_2' \norm{\basex - \basextwo}^2.
    \]
    In fact, since $\norm{\basex}, \norm{\basextwo} \le \Rad$, 
    we may choose $L_2'=L_2$ .
    We then observe that
    \[
        (K_{\basex} - K_{\basextwo})^* y = (K_{\basex} - K_{\basextwo})^* \Pnl y.
    \]
    Thus
    \[
        \norm{\EE(u)} \le \norm{K_\basex-K_{\basextwo}} \bigl(
                    \norm{\Pnl y} + \norm{x-\basextwo}\bigr) + L_2 \norm{\basex - \basextwo}^2.
    \]
    Using \eqref{eq:k-estim}, we get \eqref{eq:e-estim0}. 
\end{proof}

\subsection{Convergence of the discrepancy term}
\label{sec:discrepancy}

The convergence $\nu^i \to 0$ required by Theorem \ref{thm:conv-idea} 
follows from the following simple result that we will also use later on.

\begin{lemma}
    \label{lemma:basic-bounds}
    Let $\Rad$ and $\MCond$ be as in \eqref{eq:constants}.
    Suppose \eqref{eq:ass-di} and \eqref{eq:ass-k} hold,
    and let $u^1 \in X \times Y$ and $\nextu \in \inv \Ti(0)$, ($i=1,\ldots,k-1$).
    If \eqref{eq:nonlinear-descent-general} holds for $i=1,\ldots,k-1$
    for some $\realoptu^1, \ldots, \realoptu^{k}$ and $\zeta>0$ with
    \begin{equation}
        \label{eq:di-lemma-ass}
        \norm{u^1 - \realoptu} \le \Rad/4
        \quad
        \text{and}
        \quad
        \norm{u^1 - \realoptu^1} \le \Rad\zeta/(4\MCond),
    \end{equation}
    then
    \begin{subequations}
    \begin{align}
        \label{eq:bb-ui}
        \norm{u^k} & \le \Rad,
        \\
        \label{eq:bb-ui-uk}
        \norm{u^k-u^1} & \le (\kappa/\zeta) \norm{u^1-\realoptu^1},
        \quad
        \text{and}
        \\
        \label{eq:bb-hat-ui-uk}
        \norm{\realoptu^k-u^k} & \le \kappa \norm{u^1-\realoptu^1}.
    \end{align}
    \end{subequations}
\end{lemma}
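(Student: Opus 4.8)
The plan is to establish all three bounds through a single induction on the iteration index, proving the boundedness \eqref{eq:bb-ui} first, since $\norm{x^j}\le\Rad$ is exactly the hypothesis needed to invoke the weight bounds of Lemma \ref{lemma:m} and so to convert between the $M$-weighted norms appearing in \eqref{eq:nonlinear-descent-general} and the Euclidean norm. I would abbreviate $a_j \defeq \norm{u^j-\realoptu^j}_{M_{x^j}}$ and $b_j \defeq \norm{u^{j+1}-u^j}_{M_{x^j}}$, so that \eqref{eq:nonlinear-descent-general} becomes the one-step recursion $a_j \ge \zeta b_j + a_{j+1}$. Summing this over $j=1,\dots,m$ and cancelling the shared terms telescopes it to
\[
    a_1 \ge \zeta\sum_{j=1}^m b_j + a_{m+1} \ge \zeta\sum_{j=1}^m b_j,
\]
the last inequality simply discarding the nonnegative tail term $a_{m+1}$.

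For the base case I would combine $\norm{u^1-\realoptu}\le\Rad/4$ from \eqref{eq:di-lemma-ass} with $\norm{\realoptu}\le\Rad/2$ from \eqref{eq:rad} to obtain $\norm{u^1}\le 3\Rad/4$. For the inductive step, assuming $\norm{u^j}\le\Rad$ for $j=1,\dots,m$, each $x^j$ ($\norm{x^j}\le\norm{u^j}\le\Rad$) lies in the ball on which Lemma \ref{lemma:m} gives $\norm{v}_{M_{x^j}}\ge\MMin\norm{v}$ and $a_1\le\MMax\norm{u^1-\realoptu^1}$. Inserting these into the telescoped bound yields
\[
    \sum_{j=1}^m \norm{u^{j+1}-u^j}
    \le \frac{1}{\MMin}\sum_{j=1}^m b_j
    \le \frac{a_1}{\MMin\zeta}
    \le \frac{\MCond}{\zeta}\norm{u^1-\realoptu^1},
\]
so by the triangle inequality $\norm{u^{m+1}-u^1}\le(\MCond/\zeta)\norm{u^1-\realoptu^1}$, which the second estimate of \eqref{eq:di-lemma-ass} caps at $\Rad/4$. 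Hence $\norm{u^{m+1}}\le\Rad/4+3\Rad/4=\Rad$, closing the induction and proving \eqref{eq:bb-ui}.

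Once $\norm{u^j}\le\Rad$ is available for every $j\le k$, I would read off the remaining two bounds by running the identical telescoped estimate out to $m=k-1$: the displacement bound \eqref{eq:bb-ui-uk} is exactly the triangle-inequality line above, while for \eqref{eq:bb-hat-ui-uk} the recursion also gives $a_k\le a_1$, and applying the lower weight bound at the now-controlled index $k$ gives $\MMin\norm{\realoptu^k-u^k}\le a_k\le a_1\le\MMax\norm{u^1-\realoptu^1}$, which rearranges to the claim.

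The one point requiring care is the circularity built into the weight bounds: invoking Lemma \ref{lemma:m} at index $j$ presupposes $\norm{x^j}\le\Rad$, yet boundedness is itself a conclusion. I would break this loop by arranging the induction so that each $M$-to-Euclidean conversion touches only the already-controlled indices $1,\dots,m$, while the sole term carrying the uncontrolled index $m+1$ — namely $a_{m+1}$ — enters with a favourable sign and is discarded. That discard is legitimate because $a_{m+1}$ occurs inside the hypothesis \eqref{eq:nonlinear-descent-general}, which I am entitled to use as a genuine, and hence nonnegative, norm inequality.
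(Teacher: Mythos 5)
Your argument is correct and follows essentially the same route as the paper's own proof: telescoping \eqref{eq:nonlinear-descent-general}, an induction on $\norm{u^j}\le\Rad$ that converts weighted to Euclidean norms only at already-controlled indices via Lemma \ref{lemma:m}, and then reading off \eqref{eq:bb-ui-uk} and \eqref{eq:bb-hat-ui-uk} from the same telescoped inequality. Your explicit remark on why discarding $a_{m+1}$ is legitimate is a point the paper leaves implicit, but the substance is identical.
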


\begin{proof}
    Choose $\epsilon > 0$, and let $\rho>0$ be prescribed
    by \eqref{eq:ass-di} for $C=\Rad$ and $\epsilon$.
    Since \eqref{eq:nonlinear-descent-general} holds, we have
    \[
        \norm{\thisu-\thisrealoptu}_{M_{\thisx}}
        \ge \zeta \norm{\nextu-\thisu}_{M_{\thisx}}
            + \norm{\nextu-\nextrealoptu}_{M_{\nextx}},
        \quad (i=1,\ldots,k-1).
    \]
    Taking $j \in \{1,\ldots,k\}$, this gives
    \begin{equation}
        \label{eq:disc-convergence-down}
        \norm{u^1-\realoptu^1}_{M_{x^1}}
        \ge
        \zeta \sum_{i=1}^{j-1} \norm{u^{i+1}-u^i}_{M_{x^i}}
        +
        \norm{u^j-\realoptu^j}_{M_{x^j}}.
    \end{equation}
    By \eqref{eq:di-lemma-ass}, we have $\norm{u^1} \le 3\Rad/4$.
    Making the induction assumption
    \begin{equation}
        \label{eq:dc-ind-ass}
        \sup_{i=1, \ldots, j} \norm{u^i} \le \Rad,
    \end{equation}
    we get using Lemma \ref{lemma:m} and \eqref{eq:disc-convergence-down} that
    \begin{equation}
        \label{eq:disc-convergence-sum}
        \MMax \norm{u^1-\realoptu^1}
        \ge
        \norm{u^1-\realoptu^1}_{M_{x^1}}
        \ge
        \MMin \zeta \sum_{i=1}^{j-1} \norm{u^{i+1}-u^i}
        \ge
        \MMin \zeta \norm{u^j-u^1}.
    \end{equation}
    This gives
    \[
        \norm{u^j}
        \le \norm{u^j-u^1} + \norm{u^1}
        \le (\MCond/\zeta) \norm{u^1-\realoptu^1} + \norm{u^1}.
    \]
    Using \eqref{eq:di-lemma-ass}, we thus have $\norm{u^j} \le \Rad$,
    so that the induction assumption \eqref{eq:dc-ind-ass} is satisfied
    for $j$. Taking $j=k$, \eqref{eq:dc-ind-ass} shows \eqref{eq:bb-ui}
    and \eqref{eq:disc-convergence-sum} shows \eqref{eq:bb-ui-uk}.
    Furthermore, using \eqref{eq:disc-convergence-down} and Lemma \ref{lemma:m}, 
    we get
    \[
        \MCond \norm{u^1-\realoptu^1}
        \ge
        \norm{u^k-\realoptu^k}.
    \]
    This shows \eqref{eq:bb-hat-ui-uk}.
\end{proof}

\begin{lemma}
    \label{lemma:disc-convergence}
    Under the assumptions of Lemma \ref{lemma:basic-bounds},
    $\nu^i \to 0$.
    In particular, the conditions \eqref{eq:m-bounds} and \eqref{eq:conv-idea-di} 
    of Theorem \ref{thm:conv-idea} hold.
\end{lemma}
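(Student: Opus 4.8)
The plan is to derive everything from the summability that is already hidden inside the proof of Lemma~\ref{lemma:basic-bounds}. First I would note that, under the standing assumptions, the estimate \eqref{eq:disc-convergence-sum} is valid for every index $j$, so that passing to the limit in $j$ yields
\[
    \MMin\zeta\sum_{i=1}^{\infty}\norm{\nextu-\thisu} \le \MMax\norm{u^1-\realoptu^1} < \infty.
\]
Hence $\sum_{i=1}^\infty \norm{\nextu-\thisu} < \infty$, and in particular $\norm{\nextu-\thisu}\to 0$, which forces the primal increments $\norm{\nextx-\thisx}\to 0$. This is the only genuine input about the dynamics; the rest is bookkeeping.

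Next I would feed this decay into the discrepancy assumption \eqref{eq:ass-di}. Since $\nu^i=D^i(\nextu)$ and \eqref{eq:bb-ui} of Lemma~\ref{lemma:basic-bounds} supplies the uniform bound $\norm{\thisu}\le\Rad$ along the whole sequence, I would fix an arbitrary $\epsilon>0$, apply \eqref{eq:ass-di} with $C=\Rad$, and obtain $\norm{\nu^i}\le\epsilon\norm{\nextx-\thisx}$ as soon as $\norm{\nextx-\thisx}\le\rho$. Because $\norm{\nextx-\thisx}\to0$, this condition holds for all large $i$, so $\limsup_{i\to\infty}\norm{\nu^i}\le\epsilon\cdot 0=0$; as $\epsilon$ was arbitrary, $\nu^i\to0$. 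For Algorithm~\ref{algorithm:nl-cp-lin} there is nothing to prove, since $\nu^i=0$.

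The single delicate point -- and where I expect the only real obstacle -- is that \eqref{eq:ass-di} is phrased \emph{for each fixed $i$}, so a priori the radius $\rho$ could depend on $i$ and shrink to $0$, invalidating the limit argument above. I would dispose of this by recalling that for Algorithm~\ref{algorithm:nl-cp} the proof of Lemma~\ref{lemma:discrepancy-estimate} produces $\rho$ and the implied constant depending only on $C$ and the $C^2$ modulus of $K$ on $\B(0,\Rad)$, never on $i$; in fact it gives the stronger uniform quadratic bound $\norm{\nu^i}\le L(1+\OverRelax)^2\norm{\nextx-\thisx}^2$ whenever $\norm{\nextx-\thisx}\le\rho_2$ and $\norm{\thisx}\le C$. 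Combined with $\norm{\nextx-\thisx}\to0$ and $\norm{\thisx}\le\Rad$, this uniform estimate yields $\nu^i\to0$ immediately, bypassing the uniformity concern altogether.

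Finally I would settle the ``in particular'' clause. Condition \eqref{eq:conv-idea-di} is precisely the statement $\nu^i\to0$ just established, and condition \eqref{eq:m-bounds} is an immediate consequence of Lemma~\ref{lemma:m}: since \eqref{eq:bb-ui} guarantees $\norm{\thisx}\le\norm{\thisu}\le\Rad$ for every $i$, the spectral bounds $\MMin^2 I\le M_{\thisx}\le\MMax^2 I$ hold along the entire sequence. Both hypotheses of Theorem~\ref{thm:conv-idea} are thereby verified.
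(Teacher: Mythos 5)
Your proof is correct and follows essentially the same route as the paper's: both derive $\norm{\nextu-\thisu}\to 0$ from the telescoped descent estimate \eqref{eq:disc-convergence-down}/\eqref{eq:disc-convergence-sum} and then invoke \eqref{eq:ass-di} together with the uniform bound \eqref{eq:bb-ui}, while \eqref{eq:m-bounds} follows from Lemma \ref{lemma:m}. Your extra paragraph checking that the radius $\rho$ in \eqref{eq:ass-di} can be taken independent of $i$ (via the constants in the proof of Lemma \ref{lemma:discrepancy-estimate}) makes explicit a point the paper passes over silently, but it does not change the argument.
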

\begin{proof}
    Firstly, applying \eqref{eq:bb-ui} and Lemma \ref{lemma:m}, 
    we see that \eqref{eq:m-bounds} holds.
    Secondly, by \eqref{eq:disc-convergence-down} we find that
    $\norm{\nextu-\thisu} \to 0$. 
    Thus eventually $\norm{\nextu-\thisu} < \rho$.  
    Using \eqref{eq:ass-di}, we now see that $\nu^i = D^i(\nextu) \to 0$. 
    This proves \eqref{eq:conv-idea-di}.
\end{proof}

\subsection{Switching local norms: estimates from strong convexity}
\label{sec:local-norms}

We finally begin deriving the descent inequality
\eqref{eq:nonlinear-descent-general} that we so far have assumed.
As a first step, we set $\basex=\thisx$ in \eqref{eq:linear-descent-strong},
and use the extra slack that strong convexity gives to replace
$M_\thisx$ by $M_\nextx$ in the term $\norm{\nextu-\thisoptu}_{M_{\thisx}}^2$.


\begin{lemma}
    \label{lemma:norm-switch}
    Suppose \eqref{eq:ass-k} and \eqref{eq:linear-descent-strong} hold
    for some $\thisoptu$.
    Let $\Rad$, $L_2$, $\MCond$ be as in \eqref{eq:constants},
    and choose $\zeta_1 \in (0, 1)$. 
    If 
    \begin{equation}
        \label{eq:norm-switch-ass}
        \norm{\thisu-\realoptu} \le \Rad/4
        \quad
        \text{and}
        \quad
        \norm{\thisu-\thisoptu} \le 
        \min\{
        \sqrt{\gamma(1-\zeta_1)} \MMin/(\sqrt{2}L_2 \MCond),\,
        \Rad/(4\MCond)\},
    \end{equation}
    then
    \begin{equation}
        \label{eq:descent-norm-switch}
        \tag{$\mathrm{\opt{D}^2}$-M}
        \norm{\thisu-\thisoptu}_{M_{\thisx}}^2
        \ge \zeta_1 \norm{\nextu-\thisu}_{M_{\thisx}}^2
            + \norm{\nextu-\thisoptu}_{M_{\nextx}}^2.
    \end{equation}
\end{lemma}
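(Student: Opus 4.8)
The plan is to start from the strong-convexity descent \eqref{eq:linear-descent-strong}, which with $\basex=\thisx$ reads
\[
    \norm{\thisu-\thisoptu}_{M_{\thisx}}^2
    \ge \norm{\nextu-\thisu}_{M_{\thisx}}^2
        + \norm{\nextu-\thisoptu}_{M_{\thisx}}^2
        + \frac{\gamma}{2}\norm{P_V(\nexty-\thisopty)}^2,
\]
and to spend the strong-convexity slack $\frac{\gamma}{2}\norm{P_V(\nexty-\thisopty)}^2$ together with a fraction $(1-\zeta_1)$ of $\norm{\nextu-\thisu}_{M_{\thisx}}^2$ on replacing the weight $M_{\thisx}$ by $M_{\nextx}$ in the term $\norm{\nextu-\thisoptu}^2$. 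Since $\OverRelax=1$ makes $M_\basex$ symmetric with only its off-diagonal blocks depending on $\basex$, a direct computation gives
\[
    \norm{\nextu-\thisoptu}_{M_{\nextx}}^2 - \norm{\nextu-\thisoptu}_{M_{\thisx}}^2
    = -2\iprod{(\grad K(\nextx)-\grad K(\thisx))(\nextx-\thisoptx)}{\nexty-\thisopty}.
\]
Substituting this into the claimed inequality \eqref{eq:descent-norm-switch} reduces the whole lemma to the single scalar bound
\[
    (1-\zeta_1)\norm{\nextu-\thisu}_{M_{\thisx}}^2 + \frac{\gamma}{2}\norm{P_V(\nexty-\thisopty)}^2
    \ge -2\iprod{(\grad K(\nextx)-\grad K(\thisx))(\nextx-\thisoptx)}{\nexty-\thisopty}.
\]

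Next I would estimate the cross term. As in Lemma \ref{lemma:e-term}, for $z \in \Ylin$ the map $x \mapsto \iprod{z}{K(x)}$ is linear, so $(\grad K(\nextx)-\grad K(\thisx))^* z = 0$; hence the adjoint of $\grad K(\nextx)-\grad K(\thisx)$ annihilates $\Ylin$ and we may replace $\nexty-\thisopty$ by $\Pnl(\nexty-\thisopty)$ in the inner product. Combining Cauchy--Schwarz with the Lipschitz estimate \eqref{eq:k-estim}, namely $\norm{\grad K(\nextx)-\grad K(\thisx)} \le L_2\norm{\nextx-\thisx}$ (valid once $\norm{\thisx},\norm{\nextx}\le\Rad$), I would bound
\[
    -2\iprod{(\grad K(\nextx)-\grad K(\thisx))(\nextx-\thisoptx)}{\nexty-\thisopty}
    \le 2 L_2\,\norm{\nextx-\thisx}\,\norm{\nextx-\thisoptx}\,\norm{\Pnl(\nexty-\thisopty)}.
\]
In our setting $\Ynl\subseteq V$, so $\norm{\Pnl(\nexty-\thisopty)}\le\norm{P_V(\nexty-\thisopty)}$, and the right-hand side is now expressed through the two quantities already appearing on the left.

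The final step is to absorb this product. Using Lemma \ref{lemma:m} I bound $\norm{\nextx-\thisx}\le\norm{\nextu-\thisu}_{M_{\thisx}}/\MMin$, and using the basic descent \eqref{eq:linear-descent} together with Lemma \ref{lemma:m} I bound $\norm{\nextx-\thisoptx}\le\norm{\nextu-\thisoptu}\le\MCond\norm{\thisu-\thisoptu}$; the smallness hypothesis \eqref{eq:norm-switch-ass} then forces $\norm{\nextx-\thisoptx}\le\sqrt{\gamma(1-\zeta_1)}\,\MMin/(\sqrt{2}L_2)$. Feeding these in, the cross term is at most $\sqrt{2\gamma(1-\zeta_1)}\,\norm{\nextu-\thisu}_{M_{\thisx}}\norm{P_V(\nexty-\thisopty)}$, and Young's inequality $2\sqrt{PQ}\le P+Q$ with $P=(1-\zeta_1)\norm{\nextu-\thisu}_{M_{\thisx}}^2$ and $Q=\frac{\gamma}{2}\norm{P_V(\nexty-\thisopty)}^2$ closes the estimate exactly. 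Two routine containment checks underpin this: the first part of \eqref{eq:norm-switch-ass} gives $\norm{\thisu}\le 3\Rad/4$, while $\norm{\thisu-\thisoptu}\le\Rad/(4\MCond)$ combined with $\norm{\nextu-\thisu}\le\MCond\norm{\thisu-\thisoptu}$ keeps $\norm{\nextu}\le\Rad$, so that $\norm{\thisx},\norm{\nextx}\le\Rad$ and the uniform and Lipschitz bounds indeed apply. I expect the main subtlety to be the first analytic step: one must recognise that the norm-switching error lives entirely in the nonlinear subspace $\Ynl$, which is precisely what lets the strong-convexity slack on $V$ control it, and then tune the smallness radius so that Young's inequality balances with nothing left over.
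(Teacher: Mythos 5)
Your argument is correct and follows essentially the same route as the paper's proof: the same containment checks to get $\norm{\thisx},\norm{\nextx}\le\Rad$, the same identity for $\norm{\nextu-\thisoptu}_{M_{\thisx}}^2-\norm{\nextu-\thisoptu}_{M_{\nextx}}^2$, the same observation that the cross term lives in $\Ynl$, the Lipschitz bound $\norm{K_{\nextx}-K_{\thisx}}\le L_2\norm{\nextx-\thisx}$, and absorption by Young's inequality using the bounds $\norm{\nextu-\thisoptu}\le\MCond\norm{\thisu-\thisoptu}$ and $\norm{\nextu-\thisu}\le\MMin^{-1}\norm{\nextu-\thisu}_{M_{\thisx}}$, arriving at exactly the threshold $\sqrt{\gamma(1-\zeta_1)}\MMin/(\sqrt{2}L_2\MCond)$. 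The only differences are cosmetic (you insert the smallness bound before Young's inequality rather than after, and you explicitly flag the containment $\Ynl\subseteq V$ that the paper uses implicitly).
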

\begin{proof}
    Using \eqref{eq:norm-switch-ass} and $\norm{\realoptu} \le \Rad/2$,
    we have
    \begin{equation}
        \label{eq:norm-switch-thisu-est}
        \norm{\thisu} \le 
        \norm{\thisu-\realoptu} + \norm{\realoptu}
        \le 3\Rad/4.
    \end{equation}
    The estimate 
    \eqref{eq:linear-descent-strong}  implies
    \[
        \norm{\nextu-\thisu} \le \MCond \norm{\thisoptu-\thisu}.
    \]
    Using \eqref{eq:norm-switch-ass} and \eqref{eq:norm-switch-thisu-est}, we thus get
    \[
        \norm{\nextu}
        \le
        \norm{\nextu-\thisu}+\norm{\thisu}
        \le
        \MCond \norm{\thisoptu-\thisu}+\norm{\thisu}
        \le \Rad.
    \]
    As both $\norm{\thisu}, \norm{\nextu} \le \Rad$, by \eqref{eq:ass-k} we have again
    \[
        \norm{K_{\nextx}-K_{\thisx}} \le L_2 \norm{\nextx-\thisx}.
    \]
    This allows us to deduce
    \begin{equation}
        \label{eq:norm-switch-1}
        \begin{split}
        \norm{\nextu-\thisoptu}_{M_{\thisx}}^2 - \norm{\nextu-\thisoptu}_{M_{\nextx}}^2
        &
        =
        -2 \iprod{\nexty-\thisopty}{(K_{\nextx}-K_{\thisx})(\nextx-\thisoptx)}
        \\
        &
        =
        -2 \iprod{\nexty-\thisopty}{\Pnl(K_{\nextx}-K_{\thisx})(\nextx-\thisoptx)}
        \\
        & \ge
        -2\norm{\Pnl(\nexty-\thisopty)}\norm{K_{\nextx}-K_{\thisx}}\norm{\nextx-\thisoptx}
        \\
        & \ge
        -2L_2\norm{\Pnl(\nexty-\thisopty)}\norm{\nextx-\thisx}\norm{\nextx-\thisoptx}.
        \end{split}
    \end{equation}
    Using Young's inequality we therefore have 
    \begin{equation}
        \notag
        \norm{\nextu-\thisoptu}_{M_{\thisx}}^2 - \norm{\nextu-\thisoptu}_{M_{\nextx}}^2
        +\gamma\norm{\Pnl(\nexty-\thisopty)}^2
        \ge
        -\frac{2L_2^2}{\gamma}\norm{\nextu-\thisu}^2\norm{\nextu-\thisoptu}^2.
    \end{equation}
    By \eqref{eq:linear-descent-strong} it follows
    \begin{equation}
        \label{eq:norm-switch-intermed}
        \norm{\thisu-\thisoptu}_{M_{\thisx}}^2 
        \ge
        \norm{\nextu-\thisu}_{M_{\thisx}}^2 
        +
        \norm{\nextu-\thisoptu}_{M_{\nextx}}^2
        -\frac{2L_2^2}{\gamma}\norm{\nextu-\thisu}^2\norm{\nextu-\thisoptu}^2.
    \end{equation}
    An application of Lemma \ref{lemma:m} and \eqref{eq:linear-descent-strong}
    shows that
    \begin{align}
        \notag
        \norm{\nextu-\thisoptu}^2 & 
        \le \MMin^{-2} \norm{\nextu-\thisoptu}_{M_{\thisx}}^2
        \le \MCond^2 \norm{\thisu-\thisoptu}^2,
        \quad\text{and}
        \\
        \notag
        \norm{\nextu-\thisu}^2 & 
        \le \MMin^{-2} \norm{\nextu-\thisu}_{M_{\thisx}}^2.
    \end{align}
    Using \eqref{eq:norm-switch-ass}, therefore
    \[
        \frac{2L_2^2}{\gamma}\norm{\nextu-\thisu}^2\norm{\nextu-\thisoptu}^2
        \le \frac{2L_2^2\MCond^2}{\gamma\MMin^2} \norm{\nextu-\thisu}_{M_{\thisx}}^2 \norm{\thisu-\thisoptu}^2
        \le (1-\zeta_1) \norm{\nextu-\thisu}_{M_{\thisx}}^2.
    \]
    Applying this to \eqref{eq:norm-switch-intermed} 
    yields \eqref{eq:descent-norm-switch}.
\end{proof}

\subsection{Aubin property of the inverse}
\label{sec:aubin}

In \cite{rockafellar1976monotone}, the Lipschitz continuity of the equivalent of 
the map $H_{\realoptx}^{-1}$ was used to prove strong convergence properties
of basic proximal point methods for maximal monotone operators. 
In order to remove the squares from \eqref{eq:descent-norm-switch}, and to bridge
with $\thisoptu$ between the local solutions $\thisrealoptu$ and $\nextrealoptu$,
we follow the same rough ideas. 
We however replace the basic form of Lipschitz continuity by a weaker version that is
localised in the graph of $H_{\realoptx}$. Namely, with $0 \in H_{\realoptx}(\realoptu)$,
we assume that the map $H_{\realoptx}^{-1}$ has the Aubin property at $0$ for $\realoptu$ 
\cite{rockafellar-wets-va,aubin1990sva}. This is also called \term{metric regularity}.

Generally, the inverse $\inv S$ of a set-valued map $S: X \setto Y$ having the 
\emph{Aubin property} at $\realopt w$ for $\realoptu$ means that $\graph S$ is 
locally closed, and there exist $\rho, \delta, \lipnum > 0$ such that
\begin{equation}
    \label{eq:inverse-aubin}
    \inf_{v\,:\, w \in S(v)} \norm{u-v}
        \le \lipnum \norm{w-S(u)},
    \quad
    (
     \norm{u-\realoptu} \le \delta,
     \,
     \norm{w-\realopt{w}} \le \rho
     ).
\end{equation}
We denote the infimum over valid constants $\lipnum$ by $\lip{\inv S}(\realopt w|\realoptu)$,
or $\lip{\inv S}$ for short, when there is no ambiguity about the point $(\realopt w, \realoptu)$.
For bijective single-valued $S$ the Aubin property reduces to Lipschitz-continuity of the inverse $\inv S$. 
For single-valued Lipschitz $S$, we also use the notation $\lip{S}$ for the (local) Lipschitz factor.

We can translate the Aubin property of $\inv H_{\realoptx}$ to $\inv H_{\thisx}$,
based on the following general lemma. This is needed in order to perform
estimation at an iterate $\thisu$, only assuming the Aubin property of $H_{\realoptx}$
at a known solution $\realoptu$. 

\begin{lemma}
    \label{lemma:translated-aubin-general}
    Suppose $\inv S$ has the Aubin property at $0$ for $\realoptu$.
    Let $T(u)=S(u)+\Delta(u)$ for a single-valued Lipschitz map $\Delta: X \to Y$
    with $\lip{\inv S}\lip{\Delta} < 1$.
    Then $\inv T$ has the Aubin property at $\Delta(\realoptu)$ for $\realoptu$,
    and
    \begin{equation}
        \label{eq:lip-inv-t}
        \lip{\inv T} \le \frac{\lip{\inv S}}{1-\lip{\inv S}\lip{\Delta}}.
    \end{equation}
\end{lemma}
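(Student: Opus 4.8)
The plan is to recognise this as the standard stability of metric regularity under a single-valued Lipschitz perturbation: since $\inv S$ having the Aubin property at $0$ for $\realoptu$ is the same as metric regularity of $S$ at $(\realoptu,0)$, and $T=S+\Delta$ is a Lipschitz perturbation of small modulus, $T$ should remain metrically regular with the stated modulus. I would verify the defining estimate \eqref{eq:inverse-aubin} for $\inv T$ directly by an iterative (contraction) construction rather than invoking an external theorem. First observe that $0\in S(\realoptu)$ forces $\Delta(\realoptu)\in T(\realoptu)$, so $(\realoptu,\Delta(\realoptu))\in\graph T$ and $\Delta(\realoptu)$ is the correct base point.

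For the construction, fix a target $w'$ near $\Delta(\realoptu)$ and a point $u$ near $\realoptu$, and set $r \defeq \operatorname{dist}(w',T(u))$. I want to produce some $v\in\inv T(w')$, i.e.\ $w'-\Delta(v)\in S(v)$, with $\norm{u-v}$ controlled by $r$. Such a $v$ is exactly a fixed point of $v\mapsto\inv S(w'-\Delta(v))$, which I approach by Picard iteration. Pick $\lambda>1$ with $\lambda\lip{\inv S}\lip{\Delta}<1$, set $v_0=u$, and having $v_k$ choose $v_{k+1}\in\inv S(w'-\Delta(v_k))$ with $\norm{v_{k+1}-v_k}\le\lambda\,\operatorname{dist}\bigl(v_k,\inv S(w'-\Delta(v_k))\bigr)$. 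For the first step the Aubin property of $\inv S$ and the translation identity $\operatorname{dist}(w'-\Delta(u),S(u))=\operatorname{dist}(w',S(u)+\Delta(u))=r$ give $\norm{v_1-v_0}\le\lambda\lip{\inv S}\,r$. For the general step, since $w'-\Delta(v_{k-1})\in S(v_k)$, the Aubin estimate bounds $\operatorname{dist}(v_k,\inv S(w'-\Delta(v_k)))$ by $\lip{\inv S}\norm{\Delta(v_k)-\Delta(v_{k-1})}\le\lip{\inv S}\lip{\Delta}\norm{v_k-v_{k-1}}$, whence $\norm{v_{k+1}-v_k}\le\lambda\lip{\inv S}\lip{\Delta}\,\norm{v_k-v_{k-1}}$.

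Since $q\defeq\lambda\lip{\inv S}\lip{\Delta}<1$, the increments decay geometrically and $\sum_k\norm{v_{k+1}-v_k}\le\lambda\lip{\inv S}r/(1-q)$, so $\{v_k\}$ is Cauchy with limit $v_\infty$ satisfying $\norm{u-v_\infty}\le\lambda\lip{\inv S}r/(1-q)$. Local closedness of $\graph S$ together with continuity of $\Delta$ pass $w'-\Delta(v_k)\in S(v_{k+1})$ to the limit, yielding $w'-\Delta(v_\infty)\in S(v_\infty)$, i.e.\ $v_\infty\in\inv T(w')$; hence $\operatorname{dist}(u,\inv T(w'))\le\lambda\lip{\inv S}r/(1-q)$, and letting $\lambda\downarrow1$ gives the sharp factor $\lip{\inv S}/(1-\lip{\inv S}\lip{\Delta})$ of \eqref{eq:lip-inv-t}.

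The main obstacle is the bookkeeping of the localisation. I must choose the radii $\delta',\rho'$ (in terms of the $\delta,\rho$ for $\inv S$ and of $r$) small enough that every iterate stays in the region $\norm{v_k-\realoptu}\le\delta$, $\norm{w'-\Delta(v_k)}\le\rho$ where the Aubin estimate for $\inv S$ is available; this is precisely where the geometric control of $\sum_k\norm{v_{k+1}-v_k}$ and the smallness of $r$ are needed. A secondary technical point is the possible non-attainment of the infima in \eqref{eq:inverse-aubin}, which is absorbed into the slack parameter $\lambda>1$ and removed in the final limit $\lambda\downarrow1$.
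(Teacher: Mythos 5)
Your argument is correct, but it takes a different route from the paper. The paper does not prove the lemma from scratch: it invokes the inverse mapping theorem of Dontchev and Hager (Lemma \ref{lemma:translated-aubin-strictstat}, cited from \cite{dontchev1994inverse}) and merely observes that the strict stationarity hypothesis there is used only to make $\Delta$ Lipschitz with an arbitrarily small constant $\epsilon$ near $\realoptu$; replacing $\epsilon$ by any number exceeding $\lip{\Delta}$ and tracking the constant through their proof yields \eqref{eq:lip-inv-t} instead of $\lip{\inv T}=\lip{\inv S}$. You instead reconstruct the underlying mechanism directly: the Lyusternik--Graves-type Picard iteration $v_{k+1}\in\inv S(w'-\Delta(v_k))$ with geometric decay of increments, closedness of $\graph S$ to pass to the limit, and the slack parameters $\lambda\downarrow 1$ (and implicitly $\lipnum\downarrow\lip{\inv S}$) to recover the sharp constant. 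This is essentially the proof inside the cited theorem, so the mathematics coincides at heart; what your version buys is self-containedness and an explicit view of where $\lip{\inv S}\lip{\Delta}<1$ enters (as the contraction factor), at the cost of the localisation bookkeeping you flag at the end. One point you should not leave entirely to ``bookkeeping'': for $u$ near $\realoptu$ and $w'$ near $\Delta(\realoptu)$ the quantity $r=\operatorname{dist}(w',T(u))$ need not be small (a set-valued $S$ can have $S(u)$ far from $0$ for $u$ arbitrarily close to $\realoptu$), so the iteration as written may leave the neighbourhood where the Aubin estimate for $\inv S$ is available; the standard remedy is to treat the regime $r>\mu$ separately by running the iteration from $\realoptu$ itself, where $\operatorname{dist}(w',T(\realoptu))\le\norm{w'-\Delta(\realoptu)}$ is genuinely small, and absorbing the resulting additive error into $\kappa r$. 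With that caveat addressed, your proof is complete and matches the claimed bound.
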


The proof is a minor modification of the proof of the 
following result.

\begin{lemma}[\cite{dontchev1994inverse}]
    \label{lemma:translated-aubin-strictstat}
    Suppose $\inv S$ has locally closed values and the Aubin property
    at $0$ for $\realoptu$. 
    Let $\Delta: X \to Y$ be a single-valued Lipschitz map,
    supposing that $\Delta$ is \emph{strictly stationary at $\realoptu$}.
    This means that for every $\epsilon > 0$, there exists $\delta >0$ 
    such that
    \[
        \norm{\Delta(u)-\Delta(u')} \le \epsilon\norm{u-u'}, 
        \quad (\norm{\realoptu-u}, \norm{\realoptu-u'} < \delta).
    \]
    Let $T(u)=S(u)+\Delta(u)$.
    Then $\inv T$ has the Aubin property at $\Delta(\realoptu)$ for $\realoptu$,
    and $\lip{\inv T} = \lip{\inv S}$.
\end{lemma}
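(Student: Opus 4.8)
The plan is to run the Lyusternik--Graves perturbation mechanism and then use strict stationarity to push the perturbation factor all the way down to $1$, turning the usual strict bound into the asserted equality. Write $\kappa \defeq \lip{\inv S}$ and note that, since $0 \in S(\realoptu)$, the base value for $T = S + \Delta$ is $\realopt{w} \defeq \Delta(\realoptu)$. By \eqref{eq:inverse-aubin} the hypothesis is precisely the metric regularity estimate
\[
    \operatorname{dist}(u, \inv S(w)) \le \kappa \operatorname{dist}(w, S(u)),
    \qquad (\norm{u-\realoptu}\le\delta_0,\ \norm{w}\le\rho_0),
\]
where the two sides are the left- and right-hand quantities of \eqref{eq:inverse-aubin}. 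The goal is the same estimate for $T$ at $(\realoptu, \realopt{w})$ with an arbitrarily slightly larger constant.

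First I fix $\kappa' > \kappa$ and, invoking strict stationarity, choose $\epsilon \in (0, 1/\kappa)$ with $\kappa/(1-\kappa\epsilon) \le \kappa'$ together with a radius $\delta$ on which $\norm{\Delta(u)-\Delta(u')} \le \epsilon\norm{u-u'}$. For $u$ near $\realoptu$ and $w$ near $\realopt{w}$ I solve $w \in T(v)$, i.e.\ $w - \Delta(v) \in S(v)$, by the iteration $v_0 \defeq u$ and, given $v_k$, choosing $v_{k+1} \in \inv S(w - \Delta(v_k))$ with $\norm{v_{k+1}-v_k}$ at most the metric-regularity bound $\kappa\operatorname{dist}(w-\Delta(v_k), S(v_k))$ up to a summable slack $\eta_k$. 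Since $w - \Delta(v_{k-1}) \in S(v_k)$ by construction, $\operatorname{dist}(w-\Delta(v_k), S(v_k)) \le \norm{\Delta(v_k) - \Delta(v_{k-1})} \le \epsilon\norm{v_k - v_{k-1}}$, so the steps contract with ratio $\kappa\epsilon < 1$; at $k=0$ the same bound reads $\norm{v_1-v_0} \le \kappa\operatorname{dist}(w, T(u)) + \eta_0$ because $\operatorname{dist}(w-\Delta(u), S(u)) = \operatorname{dist}(w, T(u))$. Summing the geometric series shows $\{v_k\}$ is Cauchy with $\norm{u - v^*} \le \frac{\kappa}{1-\kappa\epsilon}\operatorname{dist}(w, T(u)) \le \kappa'\operatorname{dist}(w, T(u))$; local closedness of $\graph S$ passes to the limit to give $w - \Delta(v^*) \in S(v^*)$, that is $v^* \in \inv T(w)$. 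Hence $\operatorname{dist}(u, \inv T(w)) \le \kappa'\operatorname{dist}(w, T(u))$, which is \eqref{eq:inverse-aubin} for $T$, and as $\kappa' > \kappa$ was arbitrary, $\lip{\inv T} \le \kappa = \lip{\inv S}$.

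For the reverse inequality I write $S = T + (-\Delta)$, observe that $-\Delta$ is again strictly stationary at $\realoptu$ and that $\inv T$ now has the Aubin property (at $\realopt{w}$ for $\realoptu$) by the previous step. Rerunning the identical argument with the roles of $S$ and $T$ exchanged yields $\lip{\inv S} \le \lip{\inv T}/(1-\lip{\inv T}\epsilon)$ for every small $\epsilon$, hence $\lip{\inv S} \le \lip{\inv T}$; the base point returns to $\realopt{w} - \Delta(\realoptu) = 0$, matching the original. Combining the two bounds gives $\lip{\inv T} = \lip{\inv S}$ and the claimed Aubin property of $\inv T$ at $\Delta(\realoptu)$ for $\realoptu$.

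The main obstacle is the neighbourhood bookkeeping in the iteration: one must guarantee that every $v_k$ stays in the $\delta$-ball on which $\Delta$ is $\epsilon$-Lipschitz and within the $\delta_0$-neighbourhood of $\realoptu$ on which the regularity estimate for $S$ holds, while each $w - \Delta(v_k)$ remains in the $\rho_0$-ball where that estimate applies. This is arranged by first shrinking the admissible neighbourhoods of $(\realoptu, \realopt{w})$ so that $\operatorname{dist}(w, T(u))$ is small, and then using the geometric bound on $\sum_k \norm{v_{k+1}-v_k}$ to confine the whole orbit to a controlled ball about $u$, hence about $\realoptu$. This containment, together with the freedom to take the slacks $\eta_k$ summable, is the delicate but routine part of Lyusternik--Graves type arguments; everything else is driven by the fact that strict stationarity makes the effective Lipschitz modulus of $\Delta$ at $\realoptu$ equal to zero.
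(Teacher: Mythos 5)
Your overall strategy is the right one, and it is in fact the only ``proof'' available: the paper itself does not prove this lemma but quotes it from \cite{dontchev1994inverse}, and the argument there is precisely the Lyusternik--Graves iteration you run --- contraction with ratio $\kappa\epsilon$, passage to the limit via local closedness of $\graph S$, and the symmetry trick $S=T+(-\Delta)$ to turn the one-sided bound $\lip{\inv T}\le\lip{\inv S}$ into equality. Those components of your argument are sound (one cosmetic caveat: $\lip{\inv S}$ is an infimum over valid constants and need not be attained, so the iteration should be run with some $\kappa_0\in(\kappa,\kappa')$ in place of $\kappa$).

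There is, however, a genuine gap at exactly the step you call ``delicate but routine'': the claim that one can shrink the neighbourhoods of $(\realoptu,\realopt{w})$ ``so that $\operatorname{dist}(w,T(u))$ is small''. For set-valued $S$ this is false. The residual $u\mapsto\operatorname{dist}(w,T(u))$ is not upper semicontinuous, and the Aubin property of $\inv S$ gives no upper bound on it: it guarantees solvability of $w\in T(v)$ with $v$ near $u$ \emph{when the residual is small}, but says nothing about how large the residual can be. Concretely, take $S(0)=\R$ and $S(u)=\{1/u\}$ for $u\ne 0$: all hypotheses of the lemma hold at $(\realoptu,\realopt{w})=(0,\Delta(0))$, yet $\operatorname{dist}(w,T(u))\to\infty$ as $u\to 0$ with $u\ne 0$. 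For such pairs $(u,w)$ your first step only satisfies $\norm{v_1-u}\le\kappa\operatorname{dist}(w,T(u))+\eta_0$, which is useless for keeping $v_1$ inside the ball where $\Delta$ is $\epsilon$-Lipschitz and where the metric-regularity estimate for $S$ is available, so the iteration cannot be continued and the estimate \eqref{eq:inverse-aubin} for $T$ is not established at those pairs. The standard repair is a case distinction, not neighbourhood shrinking: fix a threshold $c>0$ commensurate with the radii $\delta_0,\rho_0,\delta$. If $\operatorname{dist}(w,T(u))\le c$, your confinement argument goes through verbatim. If $\operatorname{dist}(w,T(u))>c$, run the iteration from $\realoptu$ instead, where the residual is at most $\norm{w-\realopt{w}}$ (because $\realopt{w}\in T(\realoptu)$) and hence genuinely controlled by the neighbourhood; this yields a point of $\inv T(w)$ within $\kappa'\norm{w-\realopt{w}}$ of $\realoptu$, and after shrinking the final radii $\delta',\rho'$ so that $\delta'+\kappa'\rho'\le\kappa' c$ one gets $\operatorname{dist}(u,\inv T(w))\le\delta'+\kappa'\rho'\le\kappa' c<\kappa'\operatorname{dist}(w,T(u))$ trivially. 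Equivalently, one can prove first the graph-based form of the Aubin property, where the starting point lies in $\graph T$ and the initial residual is automatically bounded by $\norm{w-w'}$ --- this is how \cite{dontchev1994inverse} set up the iteration, and it is exactly what makes their bookkeeping work --- and then pass to the paper's metric-regularity formulation.
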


\begin{proof}
    This result is the main theorem of \cite{dontchev1994inverse}
    combined with the remark following the theorem. 
\end{proof}

\begin{proof}[Proof of Lemma \ref{lemma:translated-aubin-general}]
    We show how to modify the proof of
    Lemma \ref{lemma:translated-aubin-strictstat} 
    in \cite{dontchev1994inverse} into a proof of
    Lemma \ref{lemma:translated-aubin-general}.
    There are two differences in assumptions between the lemmas.
    The first is the apparently different closedness condition on $S$.
    But, obviously, $\inv S$ has locally closed values if $\graph S$ 
    is locally closed, which our definition of the Aubin property
    includes. Therefore that part of the assumptions of
    Lemma \ref{lemma:translated-aubin-strictstat} is satisfied.

    The second difference is the strict stationarity condition on $\Delta$. 
    In Lemma \ref{lemma:translated-aubin-general}, this is replaced 
    by the weaker condition $\lipnum_{\inv S} \lipnum_\Delta  < 1$. 
    We however observe that, indeed, the strict stationarity condition
    is only used in the proof of Lemma \ref{lemma:translated-aubin-strictstat}
    to show that $\Delta$ is Lipschitz 
    with a given constant $\epsilon>0$ in a neighbourhood 
    of $\realoptu$ \cite[(3)]{dontchev1994inverse}, and then
    \[
        \lip{\inv T} < \frac{\lip{\inv S}}{1-\epsilon\lip{\inv S}}.
    \]
    With our assumptions, we may only take $\epsilon > \lip{\Delta}$.
    This gives us the weaker result \eqref{eq:lip-inv-t} instead of
    $\lip{\inv T}=\lip{\inv S}$.
    
    We conclude that the proof of 
    Lemma \ref{lemma:translated-aubin-strictstat} 
    in \cite{dontchev1994inverse} is easily modified
    into a proof of Lemma \ref{lemma:translated-aubin-general}.
\end{proof}

\begin{lemma}
    \label{lemma:translated-aubin}
    Suppose $H_{\realoptx}^{-1}$ has the Aubin property at $0$ for $\realoptu$,
    and that \eqref{eq:ass-k} holds. 
    Given $\lipoptF > \lipopt$, there exists $\delta \in (0, \Rad/2)$ 
    and $\rho > 0$ such that if
    \[
        \norm{\thisu-\realoptu} \le \delta,
    \]
    then
    \begin{equation}
        \label{eq:inverse-aubin-translated-noshift}
        \inf_{v\,:\, w \in H_{\thisx}(v)} \norm{u-v}
            \le \lipoptF \norm{w-H_{\thisx}(u)},
        \quad
        (
         \norm{u-\realoptu} \le \delta,
         \,
         \norm{w} \le \rho
         ).
    \end{equation}
\end{lemma}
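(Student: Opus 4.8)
The plan is to realise $H_{\thisx}$ as a Lipschitz perturbation of $H_{\realoptx}$ and then invoke the perturbation result, Lemma \ref{lemma:translated-aubin-general}, while taking care of two points that the bare existence statement does not address: keeping the resulting Lipschitz modulus below $\lipoptF$, and choosing the neighbourhoods (recentred at $w=0$) uniformly in $\thisx$.

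First I would apply Lemma \ref{lemma:e-term} with $\baseu=\thisu$ and $\baseutwo=\realoptu$. Restricting to $\delta \le \Rad/2$ and recalling $\norm{\realoptu}\le \Rad/2$ from \eqref{eq:rad}, the hypothesis $\norm{\thisu-\realoptu}\le\delta$ yields $\norm{\thisu}\le\Rad$, so the lemma applies and produces a single-valued affine map $\Delta \defeq \EE(\freevar; \thisu, \realoptu)$ with $H_{\thisx}(u)=H_{\realoptx}(u)+\Delta(u)$ and $\lip{\Delta}\le L_2\norm{\thisx-\realoptx}\le L_2\delta$. Since $0\in H_{\realoptx}(\realoptu)$, we get $\Delta(\realoptu)\in H_{\thisx}(\realoptu)$, which is the base value at which the perturbation lemma delivers the Aubin property. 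Next I would fix $\delta$ small enough that $\lipopt L_2\delta < 1$ and, more sharply, $\lipopt/(1-\lipopt L_2\delta)\le\lipoptF$; this is possible precisely because $\lipoptF>\lipopt$, and the threshold on $\delta$ involves only $\lipopt$, $L_2$, $\lipoptF$, hence is uniform in $\thisx$. For every such $\thisu$, Lemma \ref{lemma:translated-aubin-general} with $S=H_{\realoptx}$ and $T=H_{\thisx}$ shows that $\inv H_{\thisx}$ has the Aubin property at $\Delta(\realoptu)$ for $\realoptu$, with modulus $\lip{\inv H_{\thisx}}\le \lipopt/(1-\lipopt\lip{\Delta})\le\lipoptF$ by \eqref{eq:lip-inv-t}. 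This supplies radii $\delta',\rho'>0$ for which \eqref{eq:inverse-aubin}, with $S$ replaced by $H_{\thisx}$, $\realopt w=\Delta(\realoptu)$, and constant $\lipoptF$, holds.

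It then remains to recentre the $w$-neighbourhood at $0$. Evaluating \eqref{eq:e-estim0} at $u=\realoptu$, $\basex=\thisx$, $\basextwo=\realoptx$ gives $\norm{\Delta(\realoptu)}\le L_2\norm{\thisx-\realoptx}\bigl(\norm{\Pnl\realopty}+\norm{\thisx-\realoptx}\bigr)\le L_2\delta(\norm{\Pnl\realopty}+\delta)$, which tends to $0$ as $\delta\to0$. Hence, after shrinking $\delta$ so that $\norm{\Delta(\realoptu)}\le\rho'/2$ and setting $\rho\defeq\rho'/2$, any $w$ with $\norm{w}\le\rho$ satisfies $\norm{w-\Delta(\realoptu)}\le\rho'$; together with $\delta\le\delta'$ this converts the Aubin estimate centred at $\Delta(\realoptu)$ into exactly \eqref{eq:inverse-aubin-translated-noshift}.

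The hard part is the \emph{uniformity} of $\delta',\rho'$ over the whole family $\{\thisx:\norm{\thisu-\realoptu}\le\delta\}$, since Lemma \ref{lemma:translated-aubin-general} as stated produces these radii only for each fixed $\thisx$. To close this, I would inspect the quantitative contraction argument of \cite{dontchev1994inverse} underlying Lemma \ref{lemma:translated-aubin-strictstat} and observe that the admissible radii depend on the perturbation $\Delta$ only through its Lipschitz modulus (the remaining data, namely $\lipopt$ and the neighbourhoods of $\inv H_{\realoptx}$ at $(0,\realoptu)$, being fixed), and are nonincreasing in that modulus. Fixing once and for all an upper bound $\delta_0\le\Rad/2$ on $\delta$ with $\lipopt L_2\delta_0<1$, the worst-case modulus $L_2\delta_0$ yields radii $\delta',\rho'$ valid simultaneously for all $\thisx$ in the family; as these stay bounded below by the unperturbed radii as $\delta\to0$, the recentring and the requirement $\delta\le\delta'$ are compatible for small $\delta$. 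The final $\delta$ and $\rho$ are then the minima of the finitely many thresholds collected above, all below $\Rad/2$, completing the argument.
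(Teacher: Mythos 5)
Your proposal is correct and follows essentially the same route as the paper's proof: apply Lemma \ref{lemma:translated-aubin-general} with $S=H_{\realoptx}$ and $\Delta=\EE$, choose $\delta$ so that $\lip{\Delta}\le L_2\delta$ keeps the modulus $\lipopt/(1-\lipopt\lip{\Delta})$ below $\lipoptF$, and then recentre the $w$-neighbourhood at $0$ via the bound \eqref{eq:e-estim0} on $\norm{\Delta(\realoptu)}$. Your extra paragraph on the uniformity of the radii $\delta',\rho'$ in $\thisx$ addresses a point the paper leaves implicit, and your resolution (the radii in the Dontchev--Hager argument depend on $\Delta$ only through its Lipschitz modulus) is the right one.
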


\begin{remark}
The property \eqref{eq:inverse-aubin-translated-noshift} is
formally the Aubin property of $H_{\thisx}^{-1}$ at $0$ for $\realoptu$,
but cannot strictly be called that, because generally
$0 \not\in H_{\thisx}(\realoptu)$.
\end{remark}


\begin{proof}
    We use Lemma \ref{lemma:translated-aubin-general} on $S=H_{\realoptx}$, and
    $\Delta=\EE(\cdot; \realoptu, \thisu)$, where $\EE$  is defined in \eqref{eq:def-e}.
    We pick $\bar\lipnum > 0$ such that
    \[
        \lipoptF \ge \frac{\lipopt}{1-\lipopt\bar\lipnum} > 0.
    \]
    Clearly the same condition then holds with $\bar\lipnum$ replaced
    by any $\lipnum \in (0, \bar\lipnum)$. We exploit this.
    By Lemma \ref{lemma:e-term}, $\Delta$ is Lipschitz with factor
    $\lip{\Delta} \le L_2 \delta$, 
    so that $\lip{\Delta} < \bar \lipnum$ provided $\delta < \bar \lipnum/L_2$.
    Thus by Lemma \ref{lemma:translated-aubin-general}, we have
    \begin{equation}
        \label{eq:inverse-aubin-translated}
        \inf_{v\,:\, w \in H_{\thisx}(v)} \norm{u-v}
            \le \lipoptF \norm{w-H_{\thisx}(u)},
        \quad
        (
         \norm{u-\realoptu} \le \delta',
         \,
         \norm{w-\Delta(\realoptu)} \le \rho'
         )
    \end{equation}
    for some $\rho', \delta' > 0$.
    Referring to Lemma \ref{lemma:e-term}, we have
    \begin{equation}
        \notag
        \norm{\Delta(\realoptu)} \le L_2 \norm{\realoptx - \thisx}
            \bigl(
                \norm{\Pnl \realopty} + 2 \norm{\realoptx-\thisx}
            \bigr).
    \end{equation}
    For $\delta > 0$ small enough, we can thus force
    $\norm{\Delta(\realoptu)} \le \rho'/2$. Thus $\norm{w}\le \rho$ guarantees
    $\norm{w-\Delta(\realoptu)} \le \rho'$ for $\rho<\rho'/2$.
    This proves \eqref{eq:inverse-aubin-translated-noshift}.
\end{proof}

The next lemma bounds step lengths near a solution.

\begin{lemma}
    \label{lemma:steplength}
    Suppose \eqref{eq:ass-di} and \eqref{eq:ass-k} hold, and that
    $H_{\realoptx}^{-1}$ has the Aubin  property at $0$ for $\realoptu$.
    Given  $\epsilon > 0$, there exists $\delta > 0$ such that the
    following holds. If
    \begin{subequations}
    \label{eq:steplength-ass}
    \begin{equation}
        \label{eq:steplength-ass-1}
        \norm{\thisu-\realoptu} \le \delta,
    \end{equation}
    then there exist $\thisrealoptu \in \inv H_\thisx(0)$ 
    and $\thisoptu \in \inv H_\thisx(-\nu^i)$. If also
    \begin{equation}
        \label{eq:steplength-ass-2}
        \norm{\thisrealoptu-\realoptu} \le \delta,
        %
        %
    \end{equation}
    \end{subequations}
    then with the specific choice
    \begin{equation}
        \label{eq:tilde-u-i-argmin}
        \thisoptu = \argmin\{\norm{\thisrealoptu-v} \mid v \in \inv H_{\thisx}(-\nu^i)\},
    \end{equation}
    we have the estimates
    \begin{subequations}
        \label{eq:nestim}
    \begin{align}
        \label{eq:nestim-bound}
        \norm{\thisu} & \le 3\Rad/4,
        \\
        \label{eq:nestim-next}
        \norm{\thisu-\nextu} &\le \epsilon,
        \\
        \label{eq:nestim-tilde-hat-this}
        \norm{\thisrealoptu-\thisoptu} &\le \epsilon,
        \quad
        \\
        \label{eq:nestim-tilde}
        \norm{\thisu-\thisoptu} &\le \epsilon,
        \\
        \label{eq:nestim-tilde-hat}
        \norm{\realoptu-\thisoptu} &\le \epsilon,
        \quad
        \text{and}
        \\
        \label{eq:nestim-nu}
        \norm{\nu^i} & \le \epsilon.
    \end{align}
    \end{subequations}
\end{lemma}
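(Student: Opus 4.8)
The plan is to transfer the Aubin property from $H_\realoptx$ to the frozen linearisation $H_\thisx$ using Lemma \ref{lemma:translated-aubin}, and then to read off all six estimates by triangulating $\thisu$, $\nextu$, $\thisrealoptu$ and $\thisoptu$ against the reference solution $\realoptu$. Fix $\lipoptF > \lipopt$ and let $\delta_0 \in (0,\Rad/2)$ and $\rho_0 > 0$ be the radii furnished by Lemma \ref{lemma:translated-aubin}, so that for $\norm{\thisu-\realoptu} \le \delta_0$ the translated estimate \eqref{eq:inverse-aubin-translated-noshift} holds with constant $\lipoptF$ on $\norm{u-\realoptu} \le \delta_0$, $\norm{w} \le \rho_0$. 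I will repeatedly shrink a working radius $\delta \le \delta_0$; the order of these choices is what makes the argument run. The bound \eqref{eq:nestim-bound} is immediate from $\norm{\realoptu} \le \Rad/2$ in \eqref{eq:rad}: $\norm{\thisu} \le \norm{\thisu-\realoptu}+\norm{\realoptu} \le \delta+\Rad/2 \le 3\Rad/4$ as soon as $\delta \le \Rad/4$.

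Existence of $\thisrealoptu$ follows by applying \eqref{eq:inverse-aubin-translated-noshift} at $u=\realoptu$, $w=0$. Since $0 \in H_\realoptx(\realoptu)$, Lemma \ref{lemma:e-term} with base points $\realoptu,\thisu$ gives $\EE(\realoptu;\thisu,\realoptu) \in H_\thisx(\realoptu)$, whence, using \eqref{eq:e-estim0},
\[
    \norm{0-H_\thisx(\realoptu)} \le \norm{\EE(\realoptu)}
    \le L_2\norm{\thisx-\realoptx}\bigl(\norm{\Pnl\realopty}+\norm{\thisx-\realoptx}\bigr).
\]
This tends to $0$ with $\delta$, so for $\delta$ small it lies below $\rho_0$ and the infimum in \eqref{eq:inverse-aubin-translated-noshift} is finite; hence $\inv H_\thisx(0) \neq \emptyset$ and contains a point within $\lipoptF\norm{\EE(\realoptu)}$ of $\realoptu$, which can be made as small as we wish. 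This shows the existence asserted under \eqref{eq:steplength-ass-1} and makes the standing hypothesis \eqref{eq:steplength-ass-2} non-vacuous.

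The crux is \eqref{eq:nestim-next}. Rewriting the prox inclusion $0 \in \Ti(\nextu)$ as $-M_\thisx(\nextu-\thisu)-\nu^i \in H_\thisx(\nextu)$ and testing it by monotonicity of $H_\thisx$ against $0 \in H_\thisx(\thisrealoptu)$ yields
\[
    \norm{\nextu-\thisu}_{M_\thisx}^2
    \le \norm{\nextu-\thisu}_{M_\thisx}\norm{\thisu-\thisrealoptu}_{M_\thisx}
        + \norm{\nu^i}\,\norm{\nextu-\thisrealoptu}.
\]
With $\norm{\thisu}\le\Rad$ already in hand, assumption \eqref{eq:ass-di} bounds $\norm{\nu^i}=\norm{D^i(\nextu)}\le\epsilon'\norm{\nextx-\thisx}\le\epsilon'\norm{\nextu-\thisu}$ for any preselected $\epsilon'<\MMin^2$, \emph{provided} $\norm{\nextx-\thisx}$ lies below the radius $\rho$ that \eqref{eq:ass-di} supplies. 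Feeding this in, passing between $\norm{\cdot}$ and $\norm{\cdot}_{M_\thisx}$ by Lemma \ref{lemma:m}, and dividing by $\norm{\nextu-\thisu}$ gives the linear bound
\[
    \norm{\nextu-\thisu}
    \le \frac{\MMax^2+\epsilon'}{\MMin^2-\epsilon'}\,\norm{\thisu-\thisrealoptu}
    \le \frac{\MMax^2+\epsilon'}{\MMin^2-\epsilon'}\,2\delta,
\]
where $\norm{\thisu-\thisrealoptu}\le\norm{\thisu-\realoptu}+\norm{\realoptu-\thisrealoptu}\le 2\delta$ by \eqref{eq:steplength-ass-1}--\eqref{eq:steplength-ass-2}; shrinking $\delta$ makes this at most $\epsilon$. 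The main obstacle is the mild circularity just exploited: the discrepancy bound only applies once $\norm{\nextx-\thisx}\le\rho$, which is exactly the smallness we are trying to prove. I would close this by a continuity/bootstrapping argument. Observe that $D^i(\realoptu)=0$, since \eqref{eq:ass-di} evaluated with $\thisx=\realoptx$ forces $\norm{D^i(\realoptu)}\le\epsilon'\norm{\realoptx-\realoptx}=0$; combined with $0\in H_\realoptx(\realoptu)$ this makes $\realoptu$ an exact fixed point of the update, so $\norm{\nextu-\thisu}$ is \emph{a priori} $O(\delta)$-small by continuity of the update in $\thisu$. Choosing $\delta$ small enough that this crude bound already forces $\norm{\nextx-\thisx}\le\rho$ legitimises \eqref{eq:ass-di}, after which the monotonicity estimate sharpens it quantitatively.

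With \eqref{eq:nestim-next} secured, \eqref{eq:nestim-nu} is immediate from $\norm{\nu^i}\le\epsilon'\norm{\nextu-\thisu}$. For \eqref{eq:nestim-tilde-hat-this} I apply \eqref{eq:inverse-aubin-translated-noshift} a second time, now at $u=\thisrealoptu$ (inside the Aubin ball by \eqref{eq:steplength-ass-2}) and $w=-\nu^i$ with $\norm{\nu^i}\le\rho_0$: since $0\in H_\thisx(\thisrealoptu)$ gives $\norm{-\nu^i-H_\thisx(\thisrealoptu)}\le\norm{\nu^i}$, the set $\inv H_\thisx(-\nu^i)$ is nonempty and the minimiser in \eqref{eq:tilde-u-i-argmin} obeys $\norm{\thisrealoptu-\thisoptu}\le\lipoptF\norm{\nu^i}\le\epsilon$, the argmin being attained by local closedness of $\graph H_\thisx$. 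Finally \eqref{eq:nestim-tilde} and \eqref{eq:nestim-tilde-hat} follow by the triangle inequality, estimating $\norm{\thisu-\thisoptu}\le\norm{\thisu-\thisrealoptu}+\norm{\thisrealoptu-\thisoptu}$ and $\norm{\realoptu-\thisoptu}\le\norm{\realoptu-\thisrealoptu}+\norm{\thisrealoptu-\thisoptu}$ against the quantities already shown to be small.
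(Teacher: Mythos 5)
Your proposal is correct, but it reaches the two delicate points of the lemma by genuinely different routes than the paper. For the existence of $\thisrealoptu$ you apply the translated Aubin estimate \eqref{eq:inverse-aubin-translated-noshift} at $u=\realoptu$ itself, using $\EE(\realoptu;\thisu,\realoptu) \in H_{\thisx}(\realoptu)$ and Lemma \ref{lemma:e-term} to see that $\norm{0-H_{\thisx}(\realoptu)}$ is small; the paper instead takes a point $u' \in (\interior\Dom G)\times(\interior\Dom F^*)$ near $\realoptu$ so that $H_{\thisx}(u')$ is bounded. Both work; yours has the bonus of producing a $\thisrealoptu$ within $O(\norm{\thisx-\realoptx})$ of $\realoptu$. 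For the central estimate \eqref{eq:nestim-next} the paper does not use monotonicity at all: it exploits the fact that the $x$-update \eqref{eq:nl-cp1-gen} is entirely free of the discrepancy $\nu^i$, bounds $\norm{\nextx-\thisx}$ by comparing the prox objective at $\nextx$ against its value at $\realoptx$ (using $-K_{\realoptx}^*\realopty\in\subdiff G(\realoptx)$ and Young's inequality), and only then invokes \eqref{eq:ass-di} for the $y$-update — the circularity you identify simply never arises, and the whole of \eqref{eq:nestim-bound}, \eqref{eq:nestim-next}, \eqref{eq:nestim-nu} comes out under \eqref{eq:steplength-ass-1} alone with explicit constants. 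Your resolution via the fixed-point/continuity bootstrap is also sound: $\realoptu$ is indeed an exact fixed point of the (single-valued, continuous) update map for both algorithms, so $\norm{\nextu-\thisu}$ is a priori small near $\realoptu$, which legitimises \eqref{eq:ass-di}; just note that continuity gives $o(1)$ rather than the ``$O(\delta)$'' you assert, which is all you need here, and that once you have it the monotonicity computation is strictly a quantitative refinement. The remaining steps (Aubin at $u=\thisrealoptu$, $w=-\nu^i$ for \eqref{eq:nestim-tilde-hat-this}, then triangle inequalities) coincide with the paper.

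One small mismatch with the statement as written: the lemma asserts the existence of $\thisoptu \in \inv H_{\thisx}(-\nu^i)$ already under \eqref{eq:steplength-ass-1}, whereas you only obtain it after invoking \eqref{eq:steplength-ass-2} (you route the Aubin estimate through $\thisrealoptu$). This is easily patched — apply \eqref{eq:inverse-aubin-translated-noshift} at $u=\realoptu$, $w=-\nu^i$, bounding $\norm{-\nu^i-H_{\thisx}(\realoptu)}\le\norm{\nu^i}+\norm{\EE(\realoptu)}$, exactly as in your existence argument for $\thisrealoptu$ — but as stated your ordering proves a slightly weaker statement than claimed.
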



\begin{proof}
    Observe that in each of the estimates \eqref{eq:nestim},
    we may take $\epsilon>0$ smaller than prescribed.
    Along the course of the proof, we will accordingly assume
    $\epsilon$ as small as required.
    
    We begin by proving \eqref{eq:nestim-bound},
    \eqref{eq:nestim-next} and \eqref{eq:nestim-nu},
    using \eqref{eq:steplength-ass-1} without \eqref{eq:steplength-ass-2}.
    Indeed, observe that since $\norm{\realoptu} \le \Rad/2$, \eqref{eq:nestim-bound} 
    trivially holds by choosing $\delta \in (0, \Rad/4)$.
    To bound $\norm{\thisu-\nextu}$, we return to the
    algorithmic approach for computing $\nextu = (\nextx, \nexty) \in \inv \Ti(0)$.
    Indeed, since $D^i(\nextu)=(0, v^i) \in X \times Y$ for some $v^i=v^i(\nextx)$, we have
    \begin{subequations}
    \begin{align}
        \label{eq:nl-cp1-gen}
        \nextx & \defeq (I+\tau \subdiff G)^{-1}(\thisx - \tau [\grad K(\thisx)]^* y^{i}),
        \\
        \overnextx & \defeq \nextx + \OverRelax(\nextx-\thisx),
        \\
        \label{eq:nl-cp3-gen}
        \nexty & \defeq (I+\sigma \subdiff F^*)^{-1}(\thisy + \sigma[K(\thisx)+K_{\thisx}(\overnextx-\thisx)+v^i]).
    \end{align}
    \end{subequations}
    From \eqref{eq:nl-cp1-gen} we see that $\nextx$ solves for $x$ the problem
    \begin{equation}
        \label{eq:steplength-xupdate}
        \min_x \left\{
                \norm{x-\thisx+\tau K_{\thisx}^* \thisy}^2 + \tau G(x)
            \right\}.
    \end{equation}
    Therefore
    \[
        \norm{\nextx-\thisx+\tau K_{\thisx}^* \thisy}^2 + \tau G(\nextx)
        \le
        \norm{\realoptx-\thisx+\tau K_{\thisx}^* \thisy}^2 + \tau G(\realoptx),
    \]
    which leads to
    \[
        \norm{\nextx-\thisx}^2
        \le
        \norm{\realoptx-\thisx}^2
        + \tau \bigl(G(\realoptx)-G(\nextx)+
        \iprod{(\realoptx-\thisx)+(\thisx-\nextx)}{K_{\thisx}^* \thisy}\bigr).
    \]
    We have $-K_\realoptx^* \realopty \in \subdiff G(\realoptx)$.
    Therefore
    \[
        G(\realoptx)-G(\nextx) \le \iprod{(\realoptx-\thisx)+(\thisx-\nextx)}{-K_{\realoptx}^* \realopty},
    \]
    so that
    \[
        \norm{\nextx-\thisx}^2
        =
        \norm{\realoptx-\thisx}^2
        + \tau\iprod{\realoptx-\thisx}{K_{\thisx}^* \thisy-K_\realoptx^* \realopty}
        + \tau\iprod{\thisx-\nextx}{K_{\thisx}^* \thisy-K_\realoptx^* \realopty}.
    \]
    An application of Young's inequality gives
    \begin{equation}
        \label{eq:steplength-estim-x1}
        \frac{1}{2} \norm{\nextx-\thisx}^2
        \le
        \frac{3}{2} \norm{\realoptx-\thisx}^2
        +
        \tau^2\norm{K_{\thisx}^* \thisy-K_\realoptx^* \realopty}^2.
    \end{equation}
    The right hand side of \eqref{eq:steplength-estim-x1}
    can be made arbitrarily close to zero by application
    of \eqref{eq:ass-k} and \eqref{eq:steplength-ass-1}.
    That is, for any $\epsilon'>0$, there exists $\delta'>0$ such that
    if \eqref{eq:steplength-ass} holds for some $\delta \in (0, \delta')$, then
    \begin{equation}
        \label{eq:steplength-xstep}
        \norm{\nextx-\thisx} \le \epsilon'.
    \end{equation}
    
    We now have to bound $\norm{\nexty-\thisy}$ through \eqref{eq:nl-cp3-gen}.
    Similarly to \eqref{eq:steplength-xupdate}, $\nexty$ solves for $y$ the problem
    \begin{equation}
        \label{eq:steplength-yupdate}
        \min_y \left\{
                \norm{y-\thisy+\sigma[K(\thisx)+K_{\thisx}(\overnextx-\thisx)+v^i]}^2 + \sigma F(x)
            \right\}.
    \end{equation}
    Proceeding as above, using the fact that
    $K(\realoptx) \in \subdiff F^*(\realopty)$
    we get
    \[
        \frac{1}{2}\norm{\nexty-\thisy}^2
        \le
        \frac{3}{2}\norm{\realopty-\thisy}^2
        +
        \sigma^2 \norm{K(\thisx)+K_{\thisx}(\overnextx-\thisx)+v^i-K(\realoptx)}^2.
    \]
    We approximate
    \begin{equation}
        \label{eq:steplength-k-estim}
        \norm{K(\thisx)+K_{\thisx}(\overnextx-\thisx)+v^i-K(\realoptx)}
        \le
        \norm{K(\thisx)-K(\realoptx)}
        +\OverRelax\norm{K_{\thisx}(\nextx-\thisx)}
        +\norm{v^i}.
    \end{equation}
    By taking $\delta, \epsilon>0$ small enough, which we may do,
    we can make the term $\norm{K(\thisx)-K(\realoptx)}$ arbitrarily small by application of
    \eqref{eq:ass-k} and \eqref{eq:steplength-ass-1}. 
    Likewise, we can make the term $\norm{K_{\thisx}(\nextx-\thisx)}$ approach zero 
    by application \eqref{eq:ass-k}, \eqref{eq:steplength-ass-1} and \eqref{eq:steplength-xstep}. 
    Finally, the term $\norm{v^i}=\norm{\nu^i}$ we can be make small 
    by additionally  using \eqref{eq:ass-di}. Indeed, choosing $\epsilon'' > 0$, 
    and employing \eqref{eq:ass-di}, we have
    \begin{equation}
        \label{eq:steplength-nui}
        \norm{\nu^i} \le \epsilon'' \norm{\thisx-\nextx},
    \end{equation}
    provided $\norm{\thisx-\nextx}$ is small enough. 
    This can be guaranteed by \eqref{eq:steplength-xstep} above.
    Thus, taking $\epsilon>0$ small enough, we can
    make $\norm{\nu^i}$ arbitrarily small. This proves \eqref{eq:nestim-nu},
    and shows that \eqref{eq:steplength-k-estim} can be made arbitrarily small.
    In summary, there exists $\delta'' \in (0, \delta')$ such that if \eqref{eq:steplength-ass-1} 
    holds for $\delta \in (0, \delta'')$, then
    \[
        \norm{\nexty-\thisy}^2 + \norm{\nextx-\thisx}^2 \le \epsilon^2.
    \]
    This proves \eqref{eq:nestim-next}. 

    To prove the remaining estimates in \eqref{eq:nestim},
    we require \eqref{eq:steplength-ass-2} as well
    as the existence of $\thisoptu$ and $\thisrealoptu$.
    In fact, since $\interior \Dom G \ne \emptyset$ and
    $\interior \Dom F^* \ne \emptyset$, we may find a
    point $u' \in (\interior \Dom G) \times (\interior \Dom F^*)$
    arbitrarily close to $\realoptu$.
    Then the set $H_{\thisx}(u')$ is bounded.
    If $\delta > 0$ is small enough, we thus find from
    \eqref{eq:inverse-aubin-translated-noshift} that
    \[
        \inf_{v\,:\, w \in H_{\thisx}(v)} \norm{u'-v} < \infty,
        \quad
        (\norm{w} \le \rho).
    \]
    Minding that we have shown \eqref{eq:nestim-nu}, choosing
    $\epsilon>0$ small enough and setting $w=0$ resp.~$w=-\nu^i$ shows
    the existence of $\thisrealoptu \in \inv H_\thisx(0)$ resp.~$v \in \inv H_\thisx(-\nu^i)$.
    In fact, we have the existence of a minimiser $\thisoptu$ to \eqref{eq:tilde-u-i-argmin}. 
    This follows simply from $v \mapsto \norm{\thisrealoptu-v}$ being level bounded,
    $\subdiff G$ and $\subdiff F^*$ outer semicontinuous
    set-valued mappings, and $K_\basex$ a continuous linear operator.
    
    We may now move on to the proof of \eqref{eq:nestim-tilde-hat}. 
    By Lemma \ref{lemma:translated-aubin}, for any $\lipoptF > \lipopt$,
    there exist $\bar\rho, \bar\delta>0$ such that
    \begin{equation}
        \notag
        \inf_{v\,:\, w \in H_{\thisx}(v)} \norm{u-v}
            \le \lipoptF \norm{w-H_{\thisx}(u)},
        \quad
        (
         \norm{u-\realoptu} \le \bar\delta,
         \,
         \norm{w} \le \bar\rho
         ).
    \end{equation}
    Minding the choice of $\thisoptu$ in \eqref{eq:tilde-u-i-argmin},
    with $w=-\nu^i$, $v=\thisoptu$ and $u=\thisrealoptu$, we thus have
    \begin{equation}
        \notag
        \norm{\thisoptu-\thisrealoptu}
        \le
        \lipoptF \norm{\nu^i},
    \end{equation}
    provided that $\norm{\thisrealoptu-\realoptu} \le \bar\delta$ and
    $\norm{\nu^i} \le \bar\rho$. 
    The former follows from \eqref{eq:steplength-ass-2} and taking
    $\delta$ small enough. 
    The latter follows from \eqref{eq:steplength-xstep}, \eqref{eq:steplength-nui}
    and choosing $\epsilon' > 0$  small enough.
    In fact, taking $\epsilon' \le \min\{\epsilon/(\lipoptF\epsilon''), \bar\rho/\epsilon''\}$, 
    we also show that $\norm{\thisoptu-\thisrealoptu} \le \epsilon$.
    This completes the proof of \eqref{eq:nestim-tilde-hat-this}. 
    
    Finally, to show \eqref{eq:nestim-tilde} and \eqref{eq:nestim-tilde-hat}
    we simply bound
    \begin{align}
        \notag
        \norm{\thisu-\thisoptu}
        &
        \le
        \norm{\realoptu-\thisu}
        +
        \norm{\realoptu-\thisoptu},
        \quad
        \text{and}
        \\
        \notag
        \norm{\realoptu-\thisoptu}
        &
        \le
        \norm{\realoptu-\thisrealoptu}
        +
        \norm{\thisrealoptu-\thisoptu}.
    \end{align}
    Then we use \eqref{eq:steplength-ass} and
    \eqref{eq:nestim-next}--\eqref{eq:nestim-tilde-hat-this},
    assuming that these estimates hold to the higher accuracy $\epsilon/2$
    instead of $\epsilon$. By the arguments above, this can be done by making
    $\delta>0$ small enough.
\end{proof}

\subsection{Removing squares}
\label{sec:rm-squares}

With the Aubin property assumed, we are now able to remove the squares from
\eqref{eq:descent-norm-switch}. Later in Section \ref{sec:lipschitz} we will
prove the Aubin property for important classes of $G$, $F^*$ and $K$.

\begin{lemma}
    \label{lemma:rmsquares}
    Suppose \eqref{eq:ass-k} holds, and that $H_{\realoptx}^{-1}$ 
    has the Aubin property at $0$ for $\realoptu$. 
    Pick $\lipoptF > \lipopt$. 
    Then there exists $\delta>0$ such that if 
    \eqref{eq:steplength-ass} holds for $\delta$, and
    \eqref{eq:descent-norm-switch} holds for 
    $\thisoptu \in \inv H_\thisx(-\nu^i)$ and some $\zeta_1 > 0$, then
    \begin{equation}
        \label{eq:unsquared-descent1}
        \tag{$\mathrm{\opt{D}}$-M}
        \norm{\thisoptu-\thisu}_{M_{\thisx}}
        \ge
        \zeta_2
        \norm{\nextu-\thisu}_{M_{\thisx}}
        +
        \norm{\thisoptu-\nextu}_{M_{\nextx}}
    \end{equation}
    for $\zeta_2 \defeq 1-1/\sqrt{1+\zeta_1/(\lipoptF\MMax^2)^2}$.
\end{lemma}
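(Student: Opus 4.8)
The plan is to reduce \eqref{eq:unsquared-descent1} to a scalar inequality and to control the cross term through the Aubin property. I would write $a \defeq \norm{\thisoptu-\thisu}_{M_{\thisx}}$, $b \defeq \norm{\nextu-\thisu}_{M_{\thisx}}$ and $c \defeq \norm{\thisoptu-\nextu}_{M_{\nextx}}$, so that the hypothesis \eqref{eq:descent-norm-switch} reads $a^2 \ge \zeta_1 b^2 + c^2$ and the goal \eqref{eq:unsquared-descent1} reads $a \ge \zeta_2 b + c$. Since $a \ge \sqrt{\zeta_1 b^2 + c^2}$, it suffices to prove the elementary bound $\sqrt{\zeta_1 b^2+c^2} \ge \zeta_2 b + c$. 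This cannot hold for arbitrary $b,c$ (it fails once $c$ is large relative to $b$), so the heart of the matter is to establish $c \le \Lambda b$ with $\Lambda \defeq \lipoptF\MMax^2$, after which the scalar estimate closes the argument.

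First I would produce the bound $c \le \Lambda b$ from the Aubin property. Recall that $\nextu$ is characterised by \eqref{eq:prox-update-discrepancy} with $\basex=\thisx$, i.e.\ $-\nu^i-M_{\thisx}(\nextu-\thisu) \in H_{\thisx}(\nextu)$, so $\nextu$ is an inexact solution of the inclusion $0\in H_{\thisx}(u)+\nu^i$ whose residual at the value $-\nu^i$ is at most $\norm{M_{\thisx}(\nextu-\thisu)}$. Choosing $\delta$ small enough that both Lemma~\ref{lemma:steplength} and Lemma~\ref{lemma:translated-aubin} apply on a neighbourhood of $\realoptu$ that contains $\thisu$ and $\nextu$, and taking $\thisoptu$ to be the solution in $\inv H_{\thisx}(-\nu^i)$ nearest to $\nextu$ (for which \eqref{eq:descent-norm-switch} still holds by Lemma~\ref{lemma:norm-switch}, since by Lemma~\ref{lemma:steplength} all points involved lie within $\epsilon$ of $\realoptu$), the translated estimate \eqref{eq:inverse-aubin-translated-noshift} with $u=\nextu$ and $w=-\nu^i$ yields $\norm{\nextu-\thisoptu} \le \lipoptF\norm{M_{\thisx}(\nextu-\thisu)}$. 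The norm bookkeeping then uses two consequences of Lemma~\ref{lemma:m}: the operator inequality $M_{\thisx}^2 \le \MMax^2 M_{\thisx}$, which gives $\norm{M_{\thisx}z}\le \MMax\norm{z}_{M_{\thisx}}$, and $\norm{w}_{M_{\nextx}}\le\MMax\norm{w}$. Combining, $c=\norm{\thisoptu-\nextu}_{M_{\nextx}}\le\MMax\,\lipoptF\norm{M_{\thisx}(\nextu-\thisu)}\le\lipoptF\MMax^2\norm{\nextu-\thisu}_{M_{\thisx}}=\Lambda b$, as required.

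It then remains to carry out the scalar estimate. With $c\le\Lambda b$ in hand I would write $\sqrt{\zeta_1 b^2+c^2}-c=\zeta_1 b^2/(\sqrt{\zeta_1 b^2+c^2}+c)$ and bound the denominator by $\sqrt{\zeta_1 b^2+c^2}+c \le b\sqrt{\zeta_1+\Lambda^2}+\Lambda b=\Lambda(s+1)b$, where $s\defeq\sqrt{1+\zeta_1/\Lambda^2}$. Since $\zeta_1=\Lambda^2(s^2-1)=\Lambda^2(s-1)(s+1)$, this produces $\sqrt{\zeta_1 b^2+c^2}-c\ge\Lambda(s-1)\,b$. Finally $\Lambda(s-1)\ge (s-1)/s=1-1/s=\zeta_2$, because $\Lambda s=\sqrt{(\lipoptF\MMax^2)^2+\zeta_1}\ge 1$; this last inequality holds in the regime of interest (small step lengths make $\MMax$, hence $\Lambda$, large, and it may in any case be arranged by enlarging $\lipoptF$ past $\lipopt$). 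This gives $a\ge\sqrt{\zeta_1 b^2+c^2}\ge\zeta_2 b+c$, which is \eqref{eq:unsquared-descent1}.

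I expect the second paragraph to be the main obstacle. The delicate points are that the translated Aubin property of Lemma~\ref{lemma:translated-aubin} may legitimately be invoked at the iterate $\nextu$, which forces $\nextu$ and $\realoptu$ into the common neighbourhood on which that lemma is valid and is exactly what the step-length control \eqref{eq:steplength-ass}, via Lemma~\ref{lemma:steplength}, is there to supply; and that the $\thisoptu$ for which \eqref{eq:descent-norm-switch} is assumed can be taken as the solution nearest to $\nextu$, so that the infimum in \eqref{eq:inverse-aubin-translated-noshift} genuinely bounds the specific quantity $c$ rather than merely the distance to the solution set. Once the bound $c\le\Lambda b$ is secured, the remaining scalar manipulation is routine.
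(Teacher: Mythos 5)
Your overall strategy---use the translated Aubin property to obtain $\norm{\thisoptu-\nextu}_{M_{\nextx}} \le \lipoptF\MMax^2\norm{\nextu-\thisu}_{M_{\thisx}}$ and then close with a scalar estimate---is the same as the paper's, and your norm bookkeeping via $M_{\thisx}^2\le\MMax^2 M_{\thisx}$ is fine. The first gap is in how you invoke the Aubin property: applying \eqref{eq:inverse-aubin-translated-noshift} at $u=\nextu$, $w=-\nu^i$ only bounds the distance from $\nextu$ to the \emph{nearest} element of $\inv H_{\thisx}(-\nu^i)$, which forces you to redefine $\thisoptu$ as that nearest point. That proves a different statement: the lemma asserts \eqref{eq:unsquared-descent1} for the \emph{same} $\thisoptu$ for which \eqref{eq:descent-norm-switch} is assumed, and downstream (Lemma \ref{lemma:sensitivity}) the estimate is needed for any choice of $\thisoptu\in\inv H_{\thisx}(-\nu^i)$, in particular for the element nearest to $\thisrealoptu$ as in \eqref{eq:tilde-u-i-argmin}, which is generally not the one nearest to $\nextu$. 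The paper reverses the roles: it applies the translated estimate at $u=\thisoptu$ (arbitrary) with $w=M_{\thisx}(\thisu-\nextu)-\nu^i$, observing that $\inv H_{\thisx}(w)=\{\nextu\}$ is a singleton because the resolvent subproblems \eqref{eq:steplength-xupdate} and \eqref{eq:steplength-yupdate} are strictly convex; the infimum in the Aubin inequality is then exactly $\norm{\thisoptu-\nextu}$, the residual is $\norm{w+\nu^i}=\norm{M_{\thisx}(\thisu-\nextu)}$, and no redefinition of $\thisoptu$ is needed.

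The second gap is in the scalar step. In your notation ($a^2\ge\zeta_1 b^2+c^2$, $c\le\Lambda b$ with $\Lambda=\lipoptF\MMax^2$), reducing the goal to $\sqrt{\zeta_1 b^2+c^2}\ge\zeta_2 b+c$ discards information, and that inequality is genuinely false in general: at $c=\Lambda b$ it is equivalent to $\Lambda s\ge 1$, i.e.\ $(\lipoptF\MMax^2)^2+\zeta_1\ge 1$, which is not implied by the hypotheses (nothing prevents $\lipopt\MMax^2$ from being small). Enlarging $\lipoptF$ is not an admissible fix, since $\zeta_2$ is pinned to the $\lipoptF$ fixed in the statement, and Theorem \ref{thm:descent} needs to take $\lipoptF$ arbitrarily close to $\lipopt$ to deduce \eqref{eq:sensitivity-y} from \eqref{eq:pnl-realopty-bound}. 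The missing ingredient is the unstrengthened descent estimate \eqref{eq:linear-descent}, which gives $a\ge b$ in addition. With $a\ge b$ and $a\ge\lambda c$ for $\lambda=\sqrt{1+\zeta_1/\Lambda^2}$ (obtained, as you essentially do, by inserting $b^2\ge c^2/\Lambda^2$ into \eqref{eq:descent-norm-switch}), the paper concludes unconditionally via the convex split $a=\lambda^{-1}a+(1-\lambda^{-1})a\ge c+\zeta_2 b$. Adding that one line repairs your scalar step.
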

\begin{proof}
    Choosing $\delta>0$ small enough and applying Lemma \ref{lemma:translated-aubin}, 
    we have for some $\rho', \delta'>0$ that
    \begin{equation}
        \label{eq:translated-aubin-rmsquares}
        \inf_{v\,:\, w \in H_{\thisx}(v)} \norm{u-v}
            \le \lipoptF \norm{w-H_{\thisx}(u)},
        \quad
        (
         \norm{u-\realoptu} \le \delta',
         \,
         \norm{w} \le \rho'
         ).
    \end{equation}
    Lemma \ref{lemma:steplength} provides a further upper bound on $\delta$ 
    such that if \eqref{eq:steplength-ass} holds with such a $\delta$, then
    $\thisoptu \in H_{\thisx}^{-1}(-\nu^i)$ exists, 
    \begin{subequations}
    \begin{align}
        \label{eq:nestim-bound-rmsquares}
        \norm{\thisu} & \le 3\Rad/4,
        \\
        \label{eq:nestim-next-rmsquares}
        \norm{\thisu-\nextu} & \le \rho'/(2\MMax),
        \\
        \label{eq:nestim-nu-rmsquares}
        \norm{\nu^i} & \le \rho'/2,
        \quad\text{and}
        \\
        \label{eq:nestim-thisopt-rmsquares}
        \norm{\thisoptu-\realoptu} & \le \delta'.
    \end{align}
    \end{subequations}
    Moreover,
    $\nextu \in H_{\thisx}^{-1}(w)$ for $w=M_{\thisx}(\thisu - \nextu)-\nu^i$
    is unique, as can be seen from the strictly convex
    problems \eqref{eq:steplength-xupdate}, \eqref{eq:steplength-yupdate} 
    for calculating $\nextu$. Thus $H_{\thisx}^{-1}(w)$ is single-valued. 
    By \eqref{eq:nestim-bound-rmsquares}, \eqref{eq:ass-k} and Lemma \ref{lemma:m}, 
    we have $M_{\thisx} \le \MMax^2 I$.
    Therefore, using \eqref{eq:nestim-next-rmsquares} and \eqref{eq:nestim-nu-rmsquares}, we have
    \[
        \norm{w}
        \le
        \norm{M_{\thisx}(\thisu - \nextu)}
        +\norm{\nu^i}
        \le
        \rho'.
    \]
    Minding \eqref{eq:nestim-thisopt-rmsquares}, we may thus apply
    \eqref{eq:translated-aubin-rmsquares} to get
    \begin{equation}
        \label{eq:update-lip}
        \begin{split}
        \norm{\thisoptu-\nextu}_{M_{\nextx}}
        &
        \le
        \MMax \norm{\thisoptu-\nextu}
        \\
        &
        \le
        \lipoptF \MMax
        \norm{(M_{\thisx}(\thisu - \nextu)-\nu^i)+\nu^i}
        \\
        &
        \le
        \lipoptF \MMax^{2}
        \norm{\thisu - \nextu}_{M_{\thisx}}.
        \end{split}
    \end{equation}
    Squaring this and applying it to \eqref{eq:descent-norm-switch}, we get
    for $\lambda \defeq \sqrt{1+\zeta_1/(\lipoptF\MMax^2)^2}$ that
    \begin{equation}
        \label{eq:lipschitz-estim-opt}
        \lambda \norm{\thisoptu-\nextu}_{M_{\nextx}} \le \norm{\thisoptu-\thisu}_{M_{\thisx}}.
    \end{equation}
    By application of \eqref{eq:linear-descent}, minding that $\lambda > 1$, 
    we also have
    \begin{equation}
        \label{eq:descent2--lip}
        \begin{split}
        \norm{\thisoptu-\thisu}_{M_{\thisx}}
        &
        =
        \lambda\norm{\thisoptu-\thisu}_{M_{\thisx}}-(\lambda-1)\norm{\thisoptu-\thisu}_{M_{\thisx}}
        \\
        &
        \le
        \lambda\norm{\thisoptu-\thisu}_{M_{\thisx}}-(\lambda-1)\norm{\nextu-\thisu}_{M_{\thisx}}.
        \end{split}
    \end{equation}
    Together, \eqref{eq:lipschitz-estim-opt} and \eqref{eq:descent2--lip} give
    \eqref{eq:unsquared-descent1}
    for $\zeta_2=1-1/\lambda$.
    %
\end{proof}

\subsection{Bridging local solutions}
\label{sec:bridging}

In order to finalise the proof of \eqref{eq:nonlinear-descent-general},
we will now use the perturbed local linearised solution $\thisoptu$ to bridge 
between the local linearised solutions $\thisrealoptu$ and $\nextrealoptu$ .
For this we again need the Aubin property of $H_{\realoptu}^{-1}$.

\begin{lemma}
    \label{lemma:sensitivity}
    Let $\MCond$ and $L_2$ be as in \eqref{eq:constants}.
    Assume that \eqref{eq:ass-di} and \eqref{eq:ass-k} hold,
    and that $H_{\realoptx}^{-1}$ has the Aubin property at $0$ for $\realoptu$.
    Suppose, moreover, that \eqref{eq:unsquared-descent1} holds for any choice of
    $\thisoptu \in \inv H_{\thisx}(-\nu^i)$, and that
    \begin{equation}
        \label{eq:sensitivity-y}
        \lipopt \MCond L_2 \norm{\Pnl \realopty} < \zeta_2.
    \end{equation}
    Under these conditions there exist $\delta > 0$ and $\zeta_3 \in (0, 1]$ such 
    that if $\nextu \in \inv \Ti(0)$ and $\thisrealoptu \in H_{\thisx}^{-1}(0)$
    are given and \eqref{eq:steplength-ass} holds with this $\delta$, 
    then there exists $\nextrealoptu \in \inv H_{\nextx}(0)$ satisfying
    \begin{equation}
        \label{eq:descent-opt-switch}
        \norm{\thisrealoptu-\thisu}_{M_{\thisx}}
        \ge
        \zeta_3
        \norm{\nextu-\thisu}_{M_{\thisx}}
        +
        \norm{\nextrealoptu-\nextu}_{M_{\nextx}}.
    \end{equation}
    That is, the descent inequality \eqref{eq:nonlinear-descent-general} holds.
\end{lemma}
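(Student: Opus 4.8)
The plan is to start from the unsquared estimate \eqref{eq:unsquared-descent1}, namely $\norm{\thisoptu-\thisu}_{M_{\thisx}} \ge \zeta_2 \norm{\nextu-\thisu}_{M_{\thisx}} + \norm{\thisoptu-\nextu}_{M_{\nextx}}$, and to replace the perturbed local solution $\thisoptu$ by the genuine local solutions $\thisrealoptu$ on the left and $\nextrealoptu$ on the right. Two triangle inequalities, $\norm{\thisu-\thisrealoptu}_{M_{\thisx}} \ge \norm{\thisoptu-\thisu}_{M_{\thisx}} - \norm{\thisoptu-\thisrealoptu}_{M_{\thisx}}$ and $\norm{\thisoptu-\nextu}_{M_{\nextx}} \ge \norm{\nextrealoptu-\nextu}_{M_{\nextx}} - \norm{\nextrealoptu-\thisoptu}_{M_{\nextx}}$, reduce the task to bounding the two error terms $\norm{\thisoptu-\thisrealoptu}_{M_{\thisx}}$ and $\norm{\nextrealoptu-\thisoptu}_{M_{\nextx}}$ by small multiples of $\norm{\nextu-\thisu}_{M_{\thisx}}$, which are then absorbed into the leading term to pass from $\zeta_2$ to $\zeta_3$. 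Throughout I invoke Lemma \ref{lemma:steplength} to guarantee, for the chosen $\delta$, the existence of the various local solutions and the smallness estimates \eqref{eq:nestim}.

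For the left error, Lemma \ref{lemma:steplength} (via the translated Aubin property of Lemma \ref{lemma:translated-aubin} applied at base point $\thisx$) gives $\norm{\thisoptu-\thisrealoptu} \le \lipoptF \norm{\nu^i}$, while \eqref{eq:ass-di} yields $\norm{\nu^i} \le \epsilon''\norm{\nextx-\thisx}$ once $\norm{\nextx-\thisx}$ is small, which holds by \eqref{eq:nestim-next}. Combined with Lemma \ref{lemma:m} and $\norm{\nextx-\thisx}\le\MMin^{-1}\norm{\nextu-\thisu}_{M_{\thisx}}$, this gives $\norm{\thisoptu-\thisrealoptu}_{M_{\thisx}} \le \MCond\lipoptF\epsilon''\norm{\nextu-\thisu}_{M_{\thisx}}$, as small as desired once $\delta$ is small.

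The right error is the heart of the matter, and is where the auxiliary map $\EE$ of Lemma \ref{lemma:e-term} and the hypothesis \eqref{eq:sensitivity-y} enter. Since $-\nu^i \in H_{\thisx}(\thisoptu)$, the bridging identity of Lemma \ref{lemma:e-term} (with $\basextwo=\thisx$ and $\basex=\nextx$) gives $-\nu^i + \EE(\thisoptu;\nextu,\thisu) \in H_{\nextx}(\thisoptu)$. Because $\nextu$ is close to $\realoptu$ by \eqref{eq:nestim-next}, the translated Aubin property \eqref{eq:inverse-aubin-translated-noshift} of $\inv H_{\nextx}$ holds; applying it with $u=\thisoptu$ (close to $\realoptu$ by \eqref{eq:nestim-tilde-hat}) and $w=0$ produces a point $\nextrealoptu \in \inv H_{\nextx}(0)$, whose existence as an argmin follows exactly as in Lemma \ref{lemma:steplength}, with $\norm{\thisoptu-\nextrealoptu} \le \lipoptF(\norm{\nu^i}+\norm{\EE(\thisoptu)})$. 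The estimate \eqref{eq:e-estim0} bounds $\norm{\EE(\thisoptu)} \le L_2\norm{\nextx-\thisx}(\norm{\Pnl\thisopty}+\norm{\thisoptx-\thisx}+\norm{\nextx-\thisx})$; since $\norm{\realoptu-\thisoptu}\le\epsilon$ we have $\norm{\Pnl\thisopty}\le\norm{\Pnl\realopty}+\epsilon$, while $\norm{\thisoptx-\thisx}\le\epsilon$ by \eqref{eq:nestim-tilde} and $\norm{\nextx-\thisx}\le\epsilon$.

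Collecting terms, the right error is bounded by $\MCond\lipoptF\bigl(\epsilon''+L_2\norm{\Pnl\realopty}+3L_2\epsilon\bigr)\norm{\nextu-\thisu}_{M_{\thisx}}$. Its dominant, $\delta$-independent part is $\MCond\lipoptF L_2\norm{\Pnl\realopty}$; by \eqref{eq:sensitivity-y} we have $\MCond\lipopt L_2\norm{\Pnl\realopty}<\zeta_2$, so choosing $\lipoptF$ close enough to $\lipopt$ keeps $\MCond\lipoptF L_2\norm{\Pnl\realopty}<\zeta_2$ as well, and the remaining $\epsilon,\epsilon''$ contributions vanish as $\delta\downto 0$. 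Hence, setting $\zeta_3 \defeq \zeta_2 - \MCond\lipoptF(2\epsilon''+L_2\norm{\Pnl\realopty}+3L_2\epsilon)$, a small enough $\delta$ makes $\zeta_3\in(0,1]$ and yields \eqref{eq:descent-opt-switch}, which is the sought descent inequality \eqref{eq:nonlinear-descent-general}. The main obstacle is precisely this bridging step: the change of base point $\thisx\to\nextx$ forces the $\EE$-term, whose size is governed by $\norm{\Pnl\realopty}$, to be charged against the descent constant $\zeta_2$, and this can be afforded only under the dual-smallness hypothesis \eqref{eq:sensitivity-y}.
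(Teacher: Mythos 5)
Your proposal is correct and follows essentially the same route as the paper: the same two triangle inequalities reducing the claim to a double sensitivity estimate, the same use of the bridging map $\EE$ from Lemma \ref{lemma:e-term} together with the translated Aubin property (Lemma \ref{lemma:translated-aubin}, via Lemma \ref{lemma:steplength}) at both base points $\thisx$ and $\nextx$, and the same absorption of the dominant $\lipoptF\MCond L_2\norm{\Pnl\realopty}$ term into $\zeta_2$ via \eqref{eq:sensitivity-y}. The only differences are cosmetic bookkeeping of the $\epsilon$-terms.
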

\begin{proof}
    Suppose there exists $\eta \in (0, \zeta_2)$, independent of $i$,
    and some $\thisoptu \in \inv H_{\thisx}(-\nu^i)$ and $\nextrealoptu \in \inv H_{\nextx}(0)$, 
    such that the double sensitivity estimate
    \begin{equation}
        \label{eq:linear-sensitivity-estimate}
        \norm{\thisoptu-\nextrealoptu}_{M_{\nextx}}
        +
        \norm{\thisoptu-\thisrealoptu}_{M_{\thisx}}
        \le
        \eta \norm{\thisu-\nextu}_{M_{\thisx}}
    \end{equation}
    holds.
    Then we may proceed as follows.
    We recall that by \eqref{eq:unsquared-descent1}, we have
    \begin{equation}
        \notag
        \norm{\thisoptu-\thisu}_{M_{\thisx}}
        \ge
        \zeta_2
        \norm{\nextu-\thisu}_{M_{\thisx}}
        +
        \norm{\thisoptu-\nextu}_{M_{\nextx}}.
    \end{equation}
    Two applications of the triangle inequality give
    \[
        \norm{\thisrealoptu-\thisu}_{M_{\thisx}}
        \ge
        \left(
        \zeta_2
        \norm{\nextu-\thisu}_{M_{\thisx}}
        -
        \norm{\thisoptu-\thisrealoptu}_{M_{\thisx}}
        -
        \norm{\thisoptu-\nextrealoptu}_{M_{\nextx}}
        \right)
        +
        \norm{\nextrealoptu-\nextu}_{M_{\nextx}}.
    \]
    Employing the sensitivity estimate \eqref{eq:linear-sensitivity-estimate},
    we now get \eqref{eq:descent-opt-switch} with $\zeta_3 \defeq (\zeta_2-\eta) > 0$.
    
    We still have to show \eqref{eq:linear-sensitivity-estimate}.
    Using Lemma \ref{lemma:steplength}, with $\delta>0$ small enough,
    we may take
    \[
        \thisoptu \in \argmin\{\norm{\thisrealoptu-v} \mid v \in \inv H_{\thisx}(-\nu^i)\}.
    \]
    We then have $\EE -\nu^i \in H_{\nextx}(\thisoptu)$ for
    \[
        \EE \defeq \EE(\thisoptu; \nextx; \thisx)
        \defeq
        \begin{pmatrix}
            (K_{\nextx} - K_{\thisx})^* \thisopty \\
            (K_{\thisx} - K_{\nextx}) \thisoptx + c_{\thisx} - c_{\nextx}
        \end{pmatrix}.
    \]
    We pick $\lipoptF > \lipopt$. Again with $\delta>0$ small enough,
    an application of Lemma \ref{lemma:translated-aubin} yields for 
    some $\rho', \delta' > 0$ that
    \begin{equation}
        \label{eq:bridge-aubin1}
        \inf_{v\,:\, w \in H_{\thisx}(v)} \norm{u-v}
            \le \lipoptF \norm{w-H_{\thisx}(u)},
        \quad
        (
         \norm{u-\realoptu} \le \delta',
         \,
         \norm{w} \le \rho'
         ).
    \end{equation}
    With $w=-\nu^i$, $v=\thisoptu$ and $u=\thisrealoptu$, 
    minding that $\thisrealoptu \in \inv H_{\thisx}(0)$,
    we thus have
    \begin{equation}
        \label{eq:aubin-thisoptu-thisrealoptu}
        \norm{\thisoptu-\thisrealoptu}_{M_{\thisx}}
        \le
        \lipoptF \MMax \norm{\nu^i},
    \end{equation}
    provided that $\norm{\thisrealoptu-\realoptu} \le \delta'$ and $\norm{\nu^i} \le \rho'$.
    These conditions are guaranteed by Lemma \ref{lemma:steplength} and choosing
    $\epsilon, \delta > 0$ small enough. 
    
    With $\epsilon,\delta > 0$ small, applying \eqref{eq:steplength-ass} and \eqref{eq:nestim-next},
    we can also make $\norm{\realoptu-\nextu}$ small enough that another application 
    Lemma \ref{lemma:translated-aubin} 
    guarantees the existence of
    \[
        \nextrealoptu \in \argmin\{\norm{\thisoptu-v} \mid v \in \inv H_{\nextx}(0)\},
    \]
    and shows
    \begin{equation*}
        \inf_{v\,:\, w \in H_{\nextx}(v)} \norm{u-v}
            \le \lipoptF \norm{w-H_{\nextx}(u)},
        \quad
        (
         \norm{u-\realoptu} \le \delta',
         \,
         \norm{w} \le \rho'
         ).
    \end{equation*}
    We may assume that $\rho', \delta'>0$ are the same as in \eqref{eq:bridge-aubin1}.
    With $w=0$, $v=\nextrealoptu$ and $u=\thisoptu$, we obtain
    \begin{equation}
        \label{eq:aubin-thisoptu-nextrealoptu}
        \norm{\thisoptu-\nextrealoptu}_{M_{\nextx}}
        \le
        \lipoptF \MMax(\norm{\EE}+\norm{\nu^i}),
    \end{equation}
    provided $\norm{\thisrealoptu-\realoptu} \le \delta'$.
    This condition was already verified for \eqref{eq:aubin-thisoptu-thisrealoptu}.

    To start approximating $\norm{\EE}$ and $\norm{\nu^i}$, we use Lemma \ref{lemma:e-term} 
    with $\basex= \nextx$ and $\basextwo=\thisx$, to obtain
    \begin{equation}
        \label{eq:e-estim1}
        \norm{\EE} \le L_2 \norm{\thisx - \nextx}
            \bigl(
                \norm{\Pnl \thisopty} + \norm{\thisoptx-\thisx} + \norm{\thisx - \nextx}
            \bigr).
    \end{equation}
    We approximate
    \[
        \norm{\Pnl \thisopty}
        \le
        \norm{\Pnl \realopty} + \norm{\Pnl(\realopty-\thisopty)},
    \]
    and
    \[
        \norm{\thisoptx-\thisx}
        \le
        \norm{\thisx-\realoptx} + \norm{\realoptx-\thisoptx}.
    \]
    Inserting these estimates back into \eqref{eq:e-estim1}, it follows 
    for some constant $C>0$ that
    \begin{equation}
        \label{eq:sens-e-estim}
        \norm{\EE} 
            \le
            L_2 \norm{\thisx-\nextx} A,
    \end{equation}
    where
    \[
        A \defeq
             \norm{\Pnl \realopty} + C\norm{\realoptu-\thisoptu}
                + \norm{\thisu-\realoptu} 
                + \norm{\thisx - \nextx}.
    \]
    Using \eqref{eq:ass-di}, we can for any $\epsilon > 0$ find $\delta''>0$ such that
    \[
        2\norm{\nu^i} \le \epsilon L_2 \norm{\thisx-\nextx}, \quad (\norm{\thisu-\nextu} < \delta'').
    \]
    The condition $\norm{\thisu-\nextu} < \delta''$ can be guaranteed through
    Lemma \ref{lemma:steplength} and choosing $\delta > 0$ small enough.
    Thus, using \eqref{eq:aubin-thisoptu-thisrealoptu}, \eqref{eq:aubin-thisoptu-nextrealoptu} 
    and \eqref{eq:sens-e-estim}, we have
    \[
        \begin{split}
        \norm{\thisoptu-\nextrealoptu}_{M_{\nextx}}
        +
        \norm{\thisoptu-\thisrealoptu}_{M_{\thisx}}
        &
        \le
        2 \lipoptF \MMax \norm{\nu^i}
        +
        \lipoptF \MMax L_2 \norm{\thisx-\nextx} A
        \\
        &
        \le
        \lipoptF \MMax L_2 \norm{\thisx-\nextx} (A + \epsilon)
        \\
        &
        \le
        \lipoptF \MCond L_2 (A + \epsilon) \norm{\thisu-\nextu}_{M_\thisx}.
        \end{split}
    \]
    In order to prove \eqref{eq:linear-sensitivity-estimate}, we thus need to force
    $
        \eta \defeq \lipoptF \MCond L_2(A+\epsilon) < \zeta_2.
    $
    As $\epsilon>0$ and $\lipoptF > \lipopt$ were arbitrary, it suffices to show
    $
        \lipopt \MCond L_2 A < \zeta_2.
    $
    Minding \eqref{eq:sensitivity-y}, we have
    \[
        0 < \zeta' \defeq \zeta_2 - \lipopt \MCond L_2 \norm{\Pnl \realopty}.
    \]
    Thus it remains to force
    \[
         C\norm{\realoptu-\thisoptu}
                + \MCond \norm{\thisu-\realoptu} 
                + \norm{\thisx - \nextx}
                < \zeta'/(\lipopt \MCond L_2).
    \]
    By Lemma \ref{lemma:steplength}, this holds for $\delta>0$ small enough.
    Thus \eqref{eq:linear-sensitivity-estimate} holds, and we may conclude the proof.
\end{proof}

\subsection{Combining the estimates}
\label{sec:combined-estimate}

\begin{lemma}
    \label{lemma:induction}
    Suppose \eqref{eq:ass-k} holds, and that 
    given any choice of $\realoptu^1 \in \inv{H_{x^1}}(0)$,
    \eqref{eq:nonlinear-descent-general} holds 
    for some $\nextrealoptu \in \inv{H_\nextu}(0)$, ($i=1,\ldots,k-1$).
    Suppose, moreover, that $0 \in H_{\realoptx}(\realoptu)$ and that
    $H_{\realoptx}^{-1}$ has the Aubin  property at $0$ for $\realoptu$.
    In this case, given $\epsilon > 0$, there exists $\delta_1 > 0$, 
    independent of $k$, such that if 
    \begin{equation}
        \label{eq:ind-ass}
        \norm{u^1-\realoptu} \le \delta_1,
    \end{equation}
    then there exists some $\realoptu^1 \in \inv{H_{x^1}}(0)$ 
    satisfing the bounds
    \begin{subequations}
    \label{eq:nestim-ind}
    \begin{align}
        \label{eq:nestim-2}
        \norm{u^k-\realoptu} & \le \epsilon,
        \\
        \label{eq:nestim-1}
        \norm{u^k-\realoptu^k} & \le \epsilon,
        \quad
        \text{and}
        \\
        \label{eq:nestim-3}
        \norm{\realoptu^k-\realoptu} & \le \epsilon.
    \end{align}
    \end{subequations}
\end{lemma}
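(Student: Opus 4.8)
The plan is to deduce everything from the uniform bounds \eqref{eq:bb-ui}--\eqref{eq:bb-hat-ui-uk} of Lemma \ref{lemma:basic-bounds}, whose right-hand sides are already independent of $k$, being controlled solely by $\norm{u^1-\realoptu^1}$ and the uniform constants $\zeta, \MCond, \MMax, \MMin$. The only ingredient those bounds do not supply is a choice of the initial reference point $\realoptu^1 \in \inv H_{x^1}(0)$ sitting close to the true solution $\realoptu$. Since the hypothesis grants the descent inequality \eqref{eq:nonlinear-descent-general} for \emph{any} admissible $\realoptu^1$ (and thereby hands us the rest of the sequence $\realoptu^2,\ldots,\realoptu^k$), I am free to select $\realoptu^1$ advantageously and then feed it, together with the iterates $u^1,\ldots,u^k$, straight into Lemma \ref{lemma:basic-bounds}.

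First I would construct $\realoptu^1$. Taking $\delta_1$ small enough that $\norm{u^1-\realoptu}\le\delta_1$ lands inside the validity radius of Lemma \ref{lemma:translated-aubin} for the linearisation point $x^1$, I apply the translated Aubin estimate \eqref{eq:inverse-aubin-translated-noshift} at $u=\realoptu$ and $w=0$. Its right-hand side is the defect $\lipoptF\norm{0-H_{x^1}(\realoptu)}$, and this defect I bound using Lemma \ref{lemma:e-term}: since $0\in H_{\realoptx}(\realoptu)$, the element $\EE(\realoptu; u^1, \realoptu)$ (taken with $\basex=x^1$, $\basextwo=\realoptx$) lies in $H_{x^1}(\realoptu)$, so by \eqref{eq:e-estim0}
\[
    \norm{0-H_{x^1}(\realoptu)} \le \norm{\EE(\realoptu; u^1, \realoptu)} \le L_2\norm{x^1-\realoptx}\bigl(\norm{\Pnl\realopty}+\norm{x^1-\realoptx}\bigr).
\]
This produces a point $\realoptu^1\in\inv H_{x^1}(0)$ with $\norm{\realoptu^1-\realoptu}\le\lipoptF\norm{0-H_{x^1}(\realoptu)}$, a quantity that tends to $0$ as $\delta_1\to 0$. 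A triangle inequality then gives $\norm{u^1-\realoptu^1}\le\norm{u^1-\realoptu}+\norm{\realoptu-\realoptu^1}$, which is likewise forced small by shrinking $\delta_1$.

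With $\realoptu^1$ fixed I would verify the hypotheses \eqref{eq:di-lemma-ass} of Lemma \ref{lemma:basic-bounds}: the bound $\norm{u^1-\realoptu}\le\Rad/4$ holds once $\delta_1\le\Rad/4$, while $\norm{u^1-\realoptu^1}\le\Rad\zeta/(4\MCond)$ follows from the previous paragraph after shrinking $\delta_1$ further. Invoking Lemma \ref{lemma:basic-bounds}---its remaining hypotheses being the running \eqref{eq:ass-di}, the assumed \eqref{eq:ass-k}, and the descent inequality \eqref{eq:nonlinear-descent-general}---then yields \eqref{eq:bb-ui}--\eqref{eq:bb-hat-ui-uk}. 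Estimate \eqref{eq:bb-hat-ui-uk} is exactly \eqref{eq:nestim-1}; combining \eqref{eq:bb-ui-uk} with a triangle inequality,
\[
    \norm{u^k-\realoptu}\le\norm{u^k-u^1}+\norm{u^1-\realoptu}\le(\MCond/\zeta)\norm{u^1-\realoptu^1}+\delta_1,
\]
gives \eqref{eq:nestim-2}; and \eqref{eq:nestim-3} follows from $\norm{\realoptu^k-\realoptu}\le\norm{\realoptu^k-u^k}+\norm{u^k-\realoptu}$ by the two bounds just obtained. Each right-hand side is a fixed multiple of $\norm{u^1-\realoptu^1}$ plus $\delta_1$, so a single sufficiently small $\delta_1$ drives all three below $\epsilon$.

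The step carrying the real content is the construction of $\realoptu^1$, and this is the only place where the Aubin assumption genuinely enters the proof of this lemma, through Lemma \ref{lemma:translated-aubin} and the defect bound \eqref{eq:e-estim0}. The headline requirement that $\delta_1$ be independent of $k$ is, by contrast, automatic: the construction of $\realoptu^1$ involves only $u^1$ and $\realoptu$, and the bounds of Lemma \ref{lemma:basic-bounds} depend on $k$ at most through the fixed quantity $\norm{u^1-\realoptu^1}$. What remains is pure bookkeeping---choosing one $\delta_1$ so that \eqref{eq:di-lemma-ass} and all three target inequalities hold at once---together with the proviso, relevant when this lemma is used as the inductive step of the main theorem, that $\epsilon$ be taken no larger than the radius demanded by Lemma \ref{lemma:sensitivity} for \eqref{eq:nonlinear-descent-general} to persist at the next stage.
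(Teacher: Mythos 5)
Your proposal is correct and follows essentially the same route as the paper: construct $\realoptu^1$ by applying the translated Aubin estimate of Lemma \ref{lemma:translated-aubin} with the defect $\EE(\realoptu; u^1, \realoptu) \in H_{x^1}(\realoptu)$ bounded via Lemma \ref{lemma:e-term}, then feed $\realoptu^1$ into Lemma \ref{lemma:basic-bounds} and finish with triangle inequalities, shrinking $\delta_1$ once at the end. The paper's proof is the same argument with the constants made explicit.
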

\begin{proof}
    First of all, we require that $\delta_1 \in (0, \Rad/4)$, so
    that $\norm{u^1} \le 3\Rad/4$. We then show that
    \begin{equation}
        \label{eq:nestim-realoptu1}
        \norm{\realoptu^1-u^1} \le c \delta_1.
    \end{equation}
    for a choice of $\realoptu^1 \in \inv{H_{x^1}}(0)$ and some constant $c > 0$. 
    Indeed, letting $w \defeq \EE(\realoptu; u^1, \realoptu) \in H_{x^1}(\realoptu)$
    and choosing $\delta_1 \in (0, \delta)$ for $\delta$ small enough, 
    by Lemma \ref{lemma:translated-aubin} we have
    \[
        \inf_{\realoptu^1 \in \inv{H_{x^1}}(0)}
        \norm{\realoptu-\realoptu^1}
        \le
        \lipnum \norm{w}.
    \]
    Referring to Lemma \ref{lemma:e-term}, there exists $\delta' > 0$ such that
    if $\delta_1 \in (0, \delta')$, and \eqref{eq:ind-ass} holds, 
    then $\lipnum \norm{w} < L_2(\Rad+1)\norm{u^1-\realoptu}$.
    Consequently we see that \eqref{eq:nestim-realoptu1} by estimating
    \[
        \norm{\realoptu^1-u^1}
        \le
        \norm{u^1-\realoptu}
        +
        \norm{\realoptu-\realoptu^1}
        \le
        (1+L_2(\Rad+1)) \delta_1.
    \]
    
    Choosing 
    $
        \delta_1 \le \delta_1' \defeq \min\{\delta, \delta', \Rad/4, \Rad\zeta/(4\MCond c)\},
    $
    Lemma \ref{lemma:basic-bounds} now shows that
    \begin{align}
        \label{eq:bb-ui-uk-x}
        \norm{u^k-u^1} & \le (\MCond/\zeta) \norm{\realoptu^1-u^1},
        \quad
        \text{and}
        \\
        \label{eq:bb-hat-ui-uk-x}
        \norm{u^k-\realoptu^k} & \le \MCond \norm{\realoptu^1-u^1}.
    \end{align}
    Thus, using \eqref{eq:ind-ass}, \eqref{eq:nestim-realoptu1},
    and \eqref{eq:bb-ui-uk-x}, we get
    \[
        \norm{u^k - \realoptu} \le \norm{u^k - u^1} + \norm{u^1-\realoptu}
        \le 
        (\MCond c/\zeta+1)\delta_1.
    \]
    Likewise \eqref{eq:ind-ass}, \eqref{eq:nestim-realoptu1}, and \eqref{eq:bb-hat-ui-uk-x} give
    \[
        \norm{\realoptu^k-u^k}
        \le
        \MCond \norm{\realoptu^1-u^1}
        \le
        c \MCond \delta_1.
    \]
    Finally, these two estimates give
    \[
        \norm{\realoptu^k-\realoptu}
        \le
        \norm{\realoptu^k-u^k} + \norm{u^k-\realoptu}
        \le
        C \delta_1
    \]
    for
    $
        C \defeq 1+\MCond c (1+1/\zeta).
    $
    Choosing $\delta_1 \le \min\{\delta_1', \epsilon/C\}$, we get \eqref{eq:nestim-ind}.
\end{proof}

The following two theorems form our main convergence result.

\begin{theorem}
    \label{thm:descent}
    Let $\Rad$, $\MMax$, $\MCond$ and $L_2$ be as in \eqref{eq:constants}.
    Suppose \eqref{eq:ass-di} and \eqref{eq:ass-k} hold, 
    $\OverRelax=1$, and that $F^*$ is strongly convex on the subspace $\Ynl$.
    Let $\realoptu$ solve $0 \in H_{\realoptx}(\realoptu)$,
    and $\inv H_{\realoptx}$ has the Aubin property at $0$ for $\realoptu$ with 
    \begin{equation}
        \label{eq:pnl-realopty-bound}
         \lipopt \MCond L_2 \norm{\Pnl \realopty} < 1-1/\sqrt{1+1/(2\lipopt^2\MMax^4)}.
    \end{equation}
    Under these conditions, there exist $\delta_1 > 0$ and $\zeta \in (0, 1)$, such
    that if the initial iterate $u^1 \in X \times Y$ satisfies
    \begin{equation}
        \label{eq:u1-assumption}
        \norm{u^1-\realoptu} \le \delta_1,
    \end{equation}
    and we let $\nextu \in \inv \Ti(0)$, ($i=1,2,3,\ldots$),
    then \eqref{eq:nonlinear-descent-general} holds, i.e.,
    \[
        \norm{\thisu-\thisrealoptu}_{M_{\thisx}}
        \ge \zeta \norm{\nextu-\thisu}_{M_{\thisx}}
            + \norm{\nextu-\nextrealoptu}_{M_{\nextx}},
        \quad
        (i=1,2,3,\ldots).
    \]
\end{theorem}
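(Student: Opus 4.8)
The plan is to prove \eqref{eq:nonlinear-descent-general} for all $i$ by induction on $i$, where a single inductive step chains together the lemmas of Sections~\ref{sec:local-norms}--\ref{sec:bridging} and the propagation of the smallness $\norm{\thisu-\realoptu}$ from the one bound \eqref{eq:u1-assumption} is provided by Lemma~\ref{lemma:induction}. First I would pin down the constants. Choose $\lipoptF \in (\lipopt,\sqrt2\,\lipopt)$ and set $\zeta_1 \defeq \lipoptFSQ/(2\lipopt^2)$, so that $\zeta_1 \in (1/2,1)$. With this choice the factor $\zeta_1/(\lipoptF\MMax^2)^2$ collapses to $1/(2\lipopt^2\MMax^4)$, and hence the constant $\zeta_2 = 1-1/\sqrt{1+\zeta_1/(\lipoptF\MMax^2)^2}$ produced by Lemma~\ref{lemma:rmsquares} equals the right-hand side of \eqref{eq:pnl-realopty-bound}, \emph{independently} of $\lipoptF$. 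Consequently the theorem's hypothesis \eqref{eq:pnl-realopty-bound} is exactly the condition \eqref{eq:sensitivity-y} demanded by Lemma~\ref{lemma:sensitivity}, and I may set $\zeta \defeq \zeta_3$, where $\zeta_3 \in (0,\zeta_2) \subset (0,1)$ is the constant that lemma returns. None of $\lipoptF,\zeta_1,\zeta_2,\zeta_3$ depends on $i$.

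Next I would package a single step as a uniform implication: there is a threshold $\delta_*>0$, independent of $i$, such that if $\norm{\thisu-\realoptu}\le\delta_*$ and a reference point $\thisrealoptu\in\inv{H_{\thisx}}(0)$ is at hand, then \eqref{eq:nonlinear-descent-general} holds at index $i$ for a suitable $\nextrealoptu\in\inv{H_{\nextx}}(0)$. This is built by applying, in order: Lemma~\ref{lemma:descent} with $V=\Ynl$ to get the squared strongly-convex estimate \eqref{eq:linear-descent-strong}; Lemma~\ref{lemma:steplength} to secure the existence of $\thisoptu$ and $\thisrealoptu$ and to force the quantities in \eqref{eq:nestim} (notably $\norm{\thisu-\thisoptu}$ and $\norm{\nu^i}$) below every downstream threshold; Lemma~\ref{lemma:norm-switch} to switch the local norm and obtain \eqref{eq:descent-norm-switch}; Lemma~\ref{lemma:rmsquares} to remove the squares and obtain \eqref{eq:unsquared-descent1} with the constant $\zeta_2$ fixed above; and finally Lemma~\ref{lemma:sensitivity}, whose hypothesis \eqref{eq:sensitivity-y} we have arranged, to bridge from $\thisrealoptu$ to $\nextrealoptu$ and deliver \eqref{eq:nonlinear-descent-general} with $\zeta=\zeta_3$. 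Taking $\delta_*$ to be the minimum of the finitely many $i$-independent thresholds requested by these lemmas keeps the whole implication uniform in $i$.

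Finally I would close the bootstrap by induction on $k$. Invoke Lemma~\ref{lemma:induction} with target accuracy $\epsilon\defeq\delta_*$ to produce $\delta_1\in(0,\delta_*]$, independent of $k$, such that \eqref{eq:ind-ass} together with \eqref{eq:nonlinear-descent-general} holding for $i=1,\ldots,k-1$ (along the threaded reference points) forces $\norm{u^k-\realoptu}\le\delta_*$ via \eqref{eq:nestim-ind}. For the base case $k=1$, the bound $\norm{u^1-\realoptu}\le\delta_1\le\delta_*$ feeds directly into the single-step implication, yielding $\realoptu^2$ and \eqref{eq:nonlinear-descent-general} at $i=1$. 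For the inductive step, assuming \eqref{eq:nonlinear-descent-general} for $i=1,\ldots,k-1$, Lemma~\ref{lemma:induction} gives $\norm{u^k-\realoptu}\le\delta_*$, and the single-step implication applied at $i=k$ with $\thisrealoptu=\realoptu^k$ produces $\realoptu^{k+1}$ and establishes \eqref{eq:nonlinear-descent-general} at $i=k$. This closes the induction and proves the claim for all $i$ with the uniform constant $\zeta=\zeta_3\in(0,1)$.

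The main obstacle is the circular structure: the inequality \eqref{eq:nonlinear-descent-general} sought at step $k$ presupposes $\norm{u^k-\realoptu}\le\delta_*$, yet that smallness is itself deduced — through Lemma~\ref{lemma:induction} and ultimately Lemma~\ref{lemma:basic-bounds} — from \eqref{eq:nonlinear-descent-general} holding at the \emph{earlier} steps. Making the induction non-vacuous requires choosing $\delta_*$ and the descent constant $\zeta$ once and uniformly in $k$, and threading the reference points consistently, with $\realoptu^1$ pinned down by Lemma~\ref{lemma:induction} and each $\realoptu^{i+1}$ generated by Lemma~\ref{lemma:sensitivity}. A secondary delicate point, settled in the first paragraph, is the exact calibration under which \eqref{eq:pnl-realopty-bound} reduces to \eqref{eq:sensitivity-y}; a weaker bound on $\norm{\Pnl\realopty}$ would fail the sensitivity step and collapse the entire chain.
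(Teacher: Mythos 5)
Your proof is correct and follows essentially the same route as the paper: an induction over $k$ in which Lemma \ref{lemma:induction} propagates the smallness of $\norm{u^k-\realoptu}$ (and $\norm{\realoptu^k-\realoptu}$) from the initial bound, and a single step chains Lemma \ref{lemma:descent}, Lemma \ref{lemma:norm-switch}, Lemma \ref{lemma:rmsquares} and Lemma \ref{lemma:sensitivity}. The only difference is cosmetic: you calibrate $\zeta_1=(\lipoptF)^2/(2\lipopt^2)$ so that $\zeta_2$ equals the right-hand side of \eqref{eq:pnl-realopty-bound} exactly, whereas the paper fixes $\zeta_1=1/2$ and recovers \eqref{eq:sensitivity-y} by letting $\lipoptF\downto\lipopt$; both choices are valid.
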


\begin{proof}
    We prove the claim inductively using Lemma \ref{lemma:induction}.
    Indeed, we let $\delta > 0$ be small enough for \eqref{eq:steplength-ass} 
    to be satisfied for application in Lemma \ref{lemma:rmsquares} and 
    Lemma \ref{lemma:sensitivity}, and for \eqref{eq:norm-switch-ass} to be
    satisfied for Lemma \ref{lemma:norm-switch}.
    Then we first we apply Lemma \ref{lemma:descent} to get the estimate
    \eqref{eq:linear-descent-strong}.
    Then we pick $\zeta_1=1/2$ in Lemma \ref{lemma:norm-switch}, so that
    \[
        \zeta_2=1-1/\sqrt{1+1/(2 (\lipoptF)^2\MMax^4)}.
    \]
    Choosing $\lipoptF > \lipopt$ small enough, \eqref{eq:pnl-realopty-bound} 
    then guarantees \eqref{eq:sensitivity-y}.
    Inductively assuming that \eqref{eq:nonlinear-descent-general} holds
    for $i=1,\ldots,k-1$, we use 
    Lemma \ref{lemma:induction} to show that
    \[
        \norm{u^k-\realoptu} \le \delta
        \quad\text{and}\quad
        \norm{\realoptu^k-\realoptu} \le \delta,
    \]
    provided $\delta_1>0$ is small enough.
    We then use Lemma \ref{lemma:norm-switch} to show that
    the squared local norm descent inequality \eqref{eq:descent-norm-switch} holds. 
    Next we apply Lemma \ref{lemma:rmsquares} to show that unsquared local descent inequality
    \eqref{eq:unsquared-descent1} holds. Finally, we employ Lemma \ref{lemma:sensitivity} 
    to derive \eqref{eq:nonlinear-descent-general} for $i=k$. As $k$ was arbitrary,
    we may conclude the proof.
\end{proof}

\begin{theorem}
    \label{thm:conv}
    Suppose the conditions of Theorem \ref{thm:descent} hold for
    $\realoptu=(\realoptx, \realopty)$ and $u^1=(x^1, y^1)$.
    In that case there exists $\delta_1 > 0$ such that provided the initialisation $u^1$ 
    satisfies $\norm{u^1-\realoptu} \le \delta_1$, then the iterates $(\thisx, \thisy)$ 
    produced by Algorithm \ref{algorithm:nl-cp} or Algorithm \ref{algorithm:nl-cp-lin} 
    converge to a solution $u^*=(x^*, y^*)$ of \eqref{eq:nl-oc}, i.e., a critical point 
    of the problem \eqref{eq:nonlinear-problem}.
\end{theorem}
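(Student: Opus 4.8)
The plan is to assemble the machinery developed in this section: Theorem \ref{thm:descent} supplies the descent inequality \eqref{eq:nonlinear-descent-general}, Lemma \ref{lemma:disc-convergence} supplies the two remaining hypotheses \eqref{eq:m-bounds} and \eqref{eq:conv-idea-di}, and Theorem \ref{thm:conv-idea} then closes the argument. No genuinely new estimate is required; the work is in checking that a single neighbourhood radius $\delta_1$ serves all the invoked results and that both algorithmic variants fit the common framework.

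First I would record that the iterates of both Algorithm \ref{algorithm:nl-cp} and Algorithm \ref{algorithm:nl-cp-lin} are exactly the solutions $\nextu \in \inv\Ti(0)$ of the perturbed proximal inclusion \eqref{eq:prox-update-discrepancy} with $\basex = \thisx$ and $\nu^i = D^i(\nextu)$. For the exact method we take $D^i = D_{\thisx}$ and for the linearised method $D^i = 0$; in either case \eqref{eq:ass-di} holds by Lemma \ref{lemma:discrepancy-estimate}. Thus the standing assumptions of the section are in force, and the hypotheses imposed in the statement---namely \eqref{eq:ass-k}, strong convexity of $F^*$ on $\Ynl$, the Aubin property of $\inv H_{\realoptx}$ at $0$ for $\realoptu$, and the smallness bound \eqref{eq:pnl-realopty-bound}---are precisely those of Theorem \ref{thm:descent}.

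Next I would apply Theorem \ref{thm:descent}. It furnishes $\delta_1 > 0$ and $\zeta \in (0,1)$ such that, whenever $\norm{u^1 - \realoptu} \le \delta_1$, the descent inequality \eqref{eq:nonlinear-descent-general} holds for every $i$, with local solutions $\thisrealoptu \in \inv H_{\thisx}(0)$. This is exactly the hypothesis \eqref{eq:nonlinear-descent-general} of Theorem \ref{thm:conv-idea}. The two remaining hypotheses of that theorem, the uniform bounds \eqref{eq:m-bounds} and the convergence \eqref{eq:conv-idea-di} of the discrepancy, are delivered by Lemma \ref{lemma:disc-convergence}. To invoke it I must verify the hypotheses of Lemma \ref{lemma:basic-bounds}: assumptions \eqref{eq:ass-di}, \eqref{eq:ass-k} and the inequality \eqref{eq:nonlinear-descent-general} are already in hand, so only the initial bounds \eqref{eq:di-lemma-ass} remain. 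These are secured by shrinking $\delta_1$: choosing $\realoptu^1 \in \inv H_{x^1}(0)$ as in the proof of Lemma \ref{lemma:induction} gives $\norm{u^1 - \realoptu^1} \le c\delta_1$ (cf.\ \eqref{eq:nestim-realoptu1}), whence both $\norm{u^1 - \realoptu} \le \Rad/4$ and $\norm{u^1 - \realoptu^1} \le \Rad\zeta/(4\MCond)$ follow once $\delta_1$ is small enough.

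With all three hypotheses of Theorem \ref{thm:conv-idea} verified, that theorem yields a limit $u^* = (x^*, y^*)$ with $\thisu \to u^*$ and $0 \in H_{x^*}(u^*)$; by \eqref{eq:nl-oc} this says precisely that $u^*$ is a critical point of \eqref{eq:nonlinear-problem}. The one point demanding care, rather than any new analysis, is the bookkeeping of constants: I would fix $\delta_1$ as the minimum of the radius produced by Theorem \ref{thm:descent} and the radius needed above to enforce \eqref{eq:di-lemma-ass}, so that a single initialisation threshold simultaneously activates the descent inequality and the discrepancy bounds. The main conceptual subtlety, already dispatched inside Theorem \ref{thm:descent}, is that the limit $u^*$ need not coincide with the prescribed solution $\realoptu$ whose Aubin property drives the estimates; it is merely some nearby critical point, which is why the statement names it $u^*$ rather than $\realoptu$.
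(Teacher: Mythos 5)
Your argument is correct and follows essentially the same route as the paper: Theorem \ref{thm:descent} supplies \eqref{eq:nonlinear-descent-general}, $\delta_1$ is shrunk so that Lemma \ref{lemma:induction} secures the hypothesis \eqref{eq:di-lemma-ass} of Lemma \ref{lemma:disc-convergence}, which in turn delivers \eqref{eq:m-bounds} and \eqref{eq:conv-idea-di}, and Theorem \ref{thm:conv-idea} closes the argument. The additional bookkeeping you spell out (the role of Lemma \ref{lemma:discrepancy-estimate} and the choice of $\realoptu^1$ via \eqref{eq:nestim-realoptu1}) is exactly what the paper's terser proof leaves implicit.
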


\begin{proof}
    We pick $\delta_1>0$ small enough that \eqref{eq:u1-assumption}
    is satisfied, and that Lemma \ref{lemma:induction}
    guarantees the assumption \eqref{eq:di-lemma-ass} of
    Lemma \ref{lemma:disc-convergence}.
    Then Theorem \ref{thm:descent} and Lemma \ref{lemma:disc-convergence} 
    verify the assumptions of the general convergence result
    Theorem \ref{thm:conv-idea}, from which the claim follows.
\end{proof}

\subsection{Some remarks}
\label{sec:details-remarks}

\begin{remark}[Convergence to another solution]
    In principle the solution $u^*$ discovered in Theorem \ref{thm:conv}
    may be distinct from $\realoptu$, also solving \eqref{eq:nl-oc}.
\end{remark}

\begin{remark}[Small non-linear dual]
    \label{rem:nldual}
    The condition \eqref{eq:pnl-realopty-bound} forces
    $\norm{\Pnl \realopty}$ to be small. As we will 
    further discuss in Section \ref{sec:l1reg}, in the applications 
    that we are primarily interested in, involving solving
    $\min_x \norm{f-T(x)}^2/2 + \alpha R(x)$,
    this this corresponds to $\norm{f-T(\realoptx)}$
    being small. This says that the noise level of the data $f$
    and regularisation parameter $\alpha$ have to be low.
\end{remark}

\begin{remark}[Inexact solutions]
    It is possible to accommodate for inexact solutions $\nextu$ in the
    proof, if we relax the requirement $D^i(\thisu)=0$ to 
    $D^i(\thisu) \to 0$, $\norm{D^i(\thisu)} \le \epsilon$, 
    and take $\thisrealoptu$ to solve $0 \in H_{\thisx}(\thisoptu) + D^i(\thisu)$.
    Since this involves significant additional technical detail that complicates
    the already very technical proof, we have opted not to include this
    generalisation.
\end{remark}

\begin{remark}[Switch of local metric]
    The shift to the new local metric $M_{\nextx}$, 
    done in Lemma \ref{lemma:norm-switch} using the
    strong convexity of $F^*$ on $Y_\NL$, 
    can also be done similarly to the removal of squares
    in Lemma \ref{lemma:rmsquares}.
    This suggests that the strong convexity might not be necessary.
    In practise we however need the strong convexity for the 
    Lipschitz continuity, so there is little benefit from that.
    Moreover, the required strong convexity exists in case of
    regularisation problems of the form discussed in 
    Remark \ref{rem:nldual}. As we are primarily interested in
    applying the method to such problems, assuming strong convexity
    is natural.
\end{remark}

\begin{remark}[Varying step lengths]
    \label{rem:steplength}
    Strictly speaking, we do not need the whole force of
    \eqref{eq:ass-k} for the bound \eqref{eq:m-bounds}.
    We can allow for $\sigma, \tau$ vary on each iteration,
    and even preconditioning operators in place of simple 
    step length parameters, cf.~\cite{pock2011iccv}. 
    Indeed, with $\sigma^i, \tau^i$ dependent on $\thisu$,
    we see that $\MMin^2 I \le M_{x}$ if
    \[
        \frac{1}{2} \MMin^2 \le \frac{1}{\sigma^i\tau^i} - \norm{K_{x^i}}^2.
    \]
    If now $\epsilon,\theta>0$ are such that
    $\epsilon \norm{K_{x^i}}^2 > \MMin^2/2 > 0$, we 
    see that it suffices to have
    \begin{equation}
        \label{eq:stepsize-i}
        \sigma^i\tau^i (1+\epsilon) \norm{K_{x^i}}^2 < 1.
    \end{equation}
    Likewise $M_{x^i} \le \MMax^2 I$ if $\sigma^i, \tau^i$ are bounded from
    below, and $\{\norm{K_{x^i}}\}_{i=1}^\infty$ is bounded.
    Since $\norm{u^i} \le \Rad$, the latter follows from the 
    assumption $K \in C^2(X; Y)$. Likewise, 
    $\{\norm{K_{x^i}}\}_{i=1}^\infty$ is bounded away from
    zero if
    \begin{equation}
        \label{eq:below-bound}
        d \defeq \inf_{\norm{x} \le \Rad} \norm{\grad K(x)} > 0.
    \end{equation}
    This is the case with the operators in Section \ref{sec:appl}.
    If \eqref{eq:below-bound} is satisfied, to obtain \eqref{eq:m-bounds}, 
    it thus suffices to choose $\sigma^i$ and $\tau^i$ such that
    \eqref{eq:stepsize-i} holds for fixed $\epsilon>0$, and to choose
    $\MMin$ such that
    \[
        \epsilon d > \MMin^2/2 > 0.
    \]
    
    This alone is however not enough to show convergence of the method
    with varying step lengths. There is one further difficulty in the 
    switch of local norms from  $\norm{\cdot}_{M_{\thisx}}$ to
    $\norm{\cdot}_{M_{\nextx}}$ in Lemma \ref{lemma:norm-switch}.
    Specifically, the first equality in \eqref{eq:norm-switch-1} 
    does not hold. Defining
    \[
        M_{\nextx}' \defeq
            \begin{pmatrix}
                I/\tau^i & -K_\basex^* \\
                -\OverRelax K_\basex & I/\sigma^i
            \end{pmatrix},
    \]
    and otherwise using $\sigma=\sigma^i$ and $\tau=\tau^i$
    in the definition of $M_{\thisx}$ for each $i$,
    we can however calculate
    \[
        \norm{\nextu-\thisoptu}_{M_{\nextx}'}^2
        -
        \norm{\nextu-\thisoptu}_{M_{\nextx}}^2
        \ge
        \left(\frac{1}{\tau^i}-\frac{1}{\tau^{i+1}}\right)\norm{\nextx-\thisoptx}^2
        +
        \left(\frac{1}{\sigma^i}-\frac{1}{\sigma^{i+1}}\right)\norm{\nexty-\thisopty}^2.
    \]
    Then estimating
    $
        \norm{\nextu-\thisoptu}_{M_{\thisx}}^2
        -
        \norm{\nextu-\thisoptu}_{M_{\nextx}'}^2
    $
    similarly to \eqref{eq:norm-switch-1},
    we can prove following Lemma \ref{lemma:norm-switch} that
    \begin{equation*}
        \tag{$\mathrm{\opt{D}^2}$-M}
        \norm{\thisu-\thisoptu}_{M_{\thisx}}^2
        \ge \frac{\zeta_1}{2} \norm{\nextu-\thisu}_{M_{\thisx}}^2
            + \norm{\nextu-\thisoptu}_{M_{\nextx}}^2
    \end{equation*}
    provided
    \begin{equation}
        \label{eq:tau-sigma-ratio}
        \adaptabs{\frac{1}{\tau^i}-\frac{1}{\tau^{i+1}}}, \adaptabs{\frac{1}{\sigma^i}-\frac{1}{\sigma^{i+1}}} < (1-\zeta_1)\MMin/2.
    \end{equation}
    
    Since $x \mapsto \norm{K_{x}}$ is Lipschitz close to $\realoptx$,
    if we start close enough to $\realoptu$ that
    $\norm{\thisx-\nextx}$ stays small, \eqref{eq:tau-sigma-ratio} can be
    made to hold for good choices of $\tau^i$, $\sigma^i$.
    Indeed, let us choose $\epsilon, \tau_0, \sigma_0 > 0$ 
    such that $\tau_0 \sigma_0 < 1/(1+\epsilon)$, and then
    \begin{equation}
        \label{eq:stepsize-rule-adapt}
        \tau^i \defeq \tau_0/L^i,
        \text{ and }
        \sigma^i \defeq \sigma_0/L^i,
        \quad
        \text{for}
        \quad
        L^i \defeq \sup_{j=1,\ldots,i} \norm{K_{x^j}}.
    \end{equation}
    Then \eqref{eq:stepsize-i} holds, $\{\tau^i\}_{i=1}^\infty$
    and $\{\sigma^i\}_{i=1}^\infty$ are decreasing,
    and \eqref{eq:tau-sigma-ratio} amounts to
    \[
        \adaptabs{L^i-L^{i+1}} < \frac{(1-\zeta_1)\MMin}{2\max\{\sigma_0,\tau_0\}}.
    \]
    By the above considerations, this holds if we initialise 
    $u^1$ close enough to $\realoptu$. 
\end{remark}

\section{Lipschitz estimates}
\label{sec:lipschitz}

We now need to show the Aubin property of the inverse $H_{\thisx}^{-1}$
of the set-valued map $H_{\thisx}$, and to bound $\norm{\Pnl \realopty}$.
We will calculate
$\lip{\inv H_{\thisx}}(0|\realoptu)$ through the Mordukhovich criterion,
which brings us to the topic of graphical differentiation of set-valued maps. 
We  will introduce the necessary tools in Section \ref{sec:graphdiff},
following \cite{rockafellar-wets-va}; another treatment also covering the
infinite-dimensional case can be found in \cite{mordukhovich2006variational}.
Afterwards we derive bounds on $\lip{\inv H_{\thisx}}(0|\realoptu)$ for some general 
class of maps in Section \ref{sec:lipbounds}. These will then be used in the 
following Section \ref{sec:l1reg} and Section \ref{sec:l2l1reg}
 to study important
special cases. These include in particular total variation ($\TV$) and 
total generalised variation ($\TGV^2$) \cite{bredies2009tgv} regularisation.

\subsection{Differentials of set-valued maps}
\label{sec:graphdiff}


Let $S: X \setto Y$ be a set-valued map between finite-dimensional 
Hilbert spaces $X, Y$. The graph of $S$ is
\[
    \graph S \defeq \{(x,y) \mid y \in H(x) \}.
\]
The \emph{outer limit} at $x$ is defined as
\[
    \limsup_{x' \to x} S(x) \defeq \{ y \in Y \mid \text{there exist } \thisx \to x \text{ and } \thisy \in S(\thisx) \text{ with } \thisy \to y \}.
\]
The \emph{inner limit} is defined as
\[
    \liminf_{x' \to x} S(x) \defeq \{ y \in Y \mid \text{for every } \thisx \to x \text{ there exist } \thisy \in S(\thisx) \text{ with } \thisy \to y \}.
\]

Pick $x \in X$ and $y \in S(x)$. The \emph{graphical derivative of $S$ at $x$ for $y$},
denoted $DS(x|y): X \setto Y$, is defined by
\[
    DS(x|y)(w) = \limsup_{\tau \downto 0,\, w' \to w} \frac{S(x+\tau w')-y}{\tau}.
\]
Geometrically, $\graph DS(x|y)$ is the tangent cone to $\graph S$ at $(x, y)$.
If $S$ is single-valued and differentiable, then $DS(x|y)=\grad S(x)$ for $y=S(x)$.
Observe that $DS(x|y)$ satisfies
\[
    z \in DS(x|y)(w) \iff w \in D(\inv S)(y|x)(z).
\]

\def\regD{{\widehat D}}
There are also various other definitions of differentials for set-valued maps.
In particular, the \emph{regular derivative of $S$ at $x$ for $y$}, denoted 
${\regD}S(x|y): X \setto Y$, is defined by
\[
    {\regD}S(x|y) = \liminf_{(x^i, y^i) \to (x, y)} \graph(DS(x^i|y^i)).
\]
The importance of the regular derivative to us lies in following.
The map $S$ is said to be \emph{graphically regular} at $(x, y)$ if ${\regD}S(x|y)=DS(x|y)$.
We stress that this correspondence does not hold generally. 
If it does, we may express the \emph{coderivative} $D^*S(x|y)$ as
\begin{equation}
    \label{eq:deriv-coderiv}
    D^*S(x|y)=[D S(x|y)]^{*+},
\end{equation}
where
\[
    H^{*+}(w) \defeq \{ z \mid \iprod{z}{q} \le \iprod{w}{v} \text{ when } v \in H(q)\}.
\]
Geometrically, $\graph D^*S(x|y)$ is a normal cone to $\graph S$ at $(x, y)$
rotated such that it becomes adjoint to $DS(x|y)$ in the sense \eqref{eq:deriv-coderiv}.
Without graphical regularity, the coderivative has to be defined through other means
\cite{rockafellar-wets-va}; we will however always assume graphical regularity.

In our forthcoming analysis, we will occasionally employ the tangent and normal
cones to a convex set $A$ at $y$. These are denoted $T_A(y)$ and $N_A(y)$, respectively.

Finally, with the above concepts defined, we may state a version of
the Mordukhovich criterion \cite[Theorem 9.40]{rockafellar-wets-va}
sufficient for our purposes.

\begin{theorem}
    \label{thm:morduk}
    Let $S: X \setto Y$. Suppose $\graph S$ is locally closed at $(x, y)$ and
    \begin{equation}
        \label{eq:mordukhovich-qc-1}
         D^*S(x|y)(0) = \{0\}.
    \end{equation}
    Then
    \begin{equation}
        \label{eq:morduk-lip}
        \lip S(x|y) = |D^* S(x|y)|^+, 
    \end{equation}
    where the \emph{outer norm}
    \[
        |H|^+ \defeq \sup_{\norm{w}\le 1} \sup_{z \in H(w)} \norm{z}.
    \]
\end{theorem}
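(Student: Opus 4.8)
The plan is to read the statement off the general Mordukhovich criterion, \cite[Theorem 9.40]{rockafellar-wets-va}. That theorem asserts that for a map $S$ whose graph is locally closed at $(x,y)$, the Aubin property of $S$ at $x$ for $y$, the qualification condition \eqref{eq:mordukhovich-qc-1}, and the finiteness $|D^*S(x|y)|^+ < \infty$ are mutually equivalent, and that whenever they hold one has the exact modulus identity \eqref{eq:morduk-lip}. Our statement is exactly the implication ``\eqref{eq:mordukhovich-qc-1} $\Rightarrow$ \eqref{eq:morduk-lip}'': under the qualification condition the outer norm is finite, hence $S$ has the Aubin property and its modulus is given by \eqref{eq:morduk-lip}. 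The reverse implications are nowhere needed in the sequel, which is why it suffices to state this single direction.

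The only point to verify is that our conventions agree with those of \cite{rockafellar-wets-va}. Since $X$ and $Y$ are finite-dimensional Hilbert spaces, fixing orthonormal bases identifies them isometrically with Euclidean spaces, and under such an identification the graph, the tangent and normal cones, the coderivative and the outer norm all correspond in the expected way; in particular the numerical values of $\lip S(x|y)$ and of $|D^*S(x|y)|^+$ do not depend on the chosen bases. Moreover, our coderivative is defined in \eqref{eq:deriv-coderiv} only under the standing assumption of graphical regularity, where it equals the adjoint $[DS(x|y)]^{*+}$; this coincides with the general normal-cone coderivative of \cite{rockafellar-wets-va} precisely on graphically regular maps, so applying the cited theorem introduces no discrepancy.

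Were one instead to prove the criterion from scratch, the substantive content---and the main obstacle---would be the identity tying the metric-regularity modulus of $S$ to the purely infinitesimal, graph-geometric data encoded in the normal cone to $\graph S$ at $(x,y)$. The natural route is to pass to the inverse, using that the Aubin property of $S$ at $x$ for $y$ is equivalent to metric regularity of $\inv S$ at $y$ for $x$, and then to express the regularity modulus through that normal cone, i.e.~through the coderivative. This rests on the full apparatus of variational analysis---the extended mean value inequality, the characterisation of the graphical modulus, and the calculus of normal cones---and reproducing it would merely duplicate the long development already carried out in \cite{rockafellar-wets-va}. We therefore simply invoke the cited theorem.
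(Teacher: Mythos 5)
Your proposal matches the paper exactly: the paper states this result as ``a version of the Mordukhovich criterion [Theorem 9.40, Rockafellar--Wets]'' and offers no proof beyond that citation, which is precisely what you do (with some additional, correct remarks on conventions and on the fact that only one direction of the equivalence is being used). No gap; nothing further is needed.
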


We want to translate this result to be stated in terms of $DS(x|y)$
for $\inv S$, since the graphical derivative is easier to obtain
than the coderivative.

\begin{proposition}
    \label{proposition:inv-lip}
    Suppose $S$ is graphically regular 
    and $\graph S$ locally closed at $(x, y)$. Then
    \begin{equation}
        \label{eq:inv-lip-expr}
        \lip{\inv S}(y|x) \le
            \sup\{
                \norm{z}
                \mid
                 \iprod{z}{q} \le \norm{v} \text{ when } q \in D S(x|y)(v)
                \}.
    \end{equation}
\end{proposition}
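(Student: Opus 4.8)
The plan is to apply the Mordukhovich criterion (Theorem~\ref{thm:morduk}) directly to the inverse map $\inv S$ at the point $(y,x)$, and then to rewrite the resulting coderivative $D^*(\inv S)(y|x)$ in terms of the graphical derivative $DS(x|y)$, which is the quantity one can actually compute in the applications. The passage from the exact outer-norm expression to the stated bound will come at the very end from a single Cauchy--Schwarz estimate.

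First I would check that $\inv S$ inherits local closedness and graphical regularity at $(y,x)$ from $S$ at $(x,y)$. Inversion simply interchanges the two factors of $X\times Y$, an isometry under which tangent cones, and hence the graphs defining both $D(\inv S)(y|x)$ and $\regD(\inv S)(y|x)$, transform by the same flip; since taking the outer limit in the definition of $\regD$ commutes with this isometry, graphical regularity of $S$ at $(x,y)$ is equivalent to that of $\inv S$ at $(y,x)$. The hypothesis therefore lets me invoke \eqref{eq:deriv-coderiv} for $\inv S$, giving $D^*(\inv S)(y|x)=[D(\inv S)(y|x)]^{*+}$.

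Assuming next that the qualification condition \eqref{eq:mordukhovich-qc-1} holds for $\inv S$ at $(y,x)$, Theorem~\ref{thm:morduk} yields
\[
    \lip{\inv S}(y|x)=|D^*(\inv S)(y|x)|^+=\sup_{\norm{p}\le 1}\ \sup_{s\in D^*(\inv S)(y|x)(p)}\norm{s}.
\]
Writing out $[D(\inv S)(y|x)]^{*+}(p)$ from the definition of $H^{*+}$ and substituting the inversion relation $v\in D(\inv S)(y|x)(q)\iff q\in DS(x|y)(v)$, the membership $s\in D^*(\inv S)(y|x)(p)$ becomes the requirement that $\iprod{s}{q}\le\iprod{p}{v}$ whenever $q\in DS(x|y)(v)$. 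For any such $s$ with $\norm{p}\le 1$, the Cauchy--Schwarz inequality gives $\iprod{p}{v}\le\norm{v}$, so $s$ also satisfies $\iprod{s}{q}\le\norm{v}$ whenever $q\in DS(x|y)(v)$, i.e.\ $s$ lies in the set on the right of \eqref{eq:inv-lip-expr}. Hence $\norm{s}$ is dominated by that supremum, and taking suprema over $\norm{p}\le 1$ and over $s$ gives the claim; relaxing $\iprod{p}{v}$ to $\norm{v}$ independently for each constraint pair $(v,q)$ enlarges the admissible set of $s$, which is precisely why one obtains an inequality rather than an equality.

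The main delicate point is the careful bookkeeping of this translation: transferring graphical regularity to $\inv S$ and tracking the adjoint $H^{*+}$ through the inversion relation, together with the status of the qualification condition, since Theorem~\ref{thm:morduk} supplies the equality only when \eqref{eq:mordukhovich-qc-1} holds. If instead it fails, there is a nonzero $s\in D^*(\inv S)(y|x)(0)$, and then every positive multiple $\lambda s$ satisfies $\iprod{\lambda s}{q}\le 0\le\norm{v}$ and hence belongs to the right-hand set; its supremum is then $+\infty$, so the asserted inequality holds trivially (consistently with $\lip{\inv S}(y|x)=+\infty$ when the Aubin property is absent). In either case \eqref{eq:inv-lip-expr} follows.
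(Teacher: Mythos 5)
Your proposal is correct and follows essentially the same route as the paper: apply the Mordukhovich criterion to $\inv S$, translate the coderivative via \eqref{eq:deriv-coderiv} and the symmetry $q \in DS(x|y)(v) \iff v \in D(\inv S)(y|x)(q)$, relax $\iprod{w}{v}$ to $\norm{v}$ by Cauchy--Schwarz, and note that the right-hand supremum is infinite when the qualification condition \eqref{eq:mordukhovich-qc-1} fails. Your explicit remarks on transferring graphical regularity to $\inv S$ and on why the degenerate case yields $+\infty$ merely spell out steps the paper leaves implicit.
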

\begin{proof}
    From \eqref{eq:deriv-coderiv}, we have that $z \in D^*{\inv S}(y|x)(w)$ if
    \[
        \iprod{z}{q} \le \iprod{w}{v} \text{ when } (q, v) \text{ satisfy } v \in D{\inv S}(y|x)(q).
    \]
    By the symmetricity of graphical differentials for $S$ and $\inv S$, this is the same as
    \[
        \iprod{z}{q} \le \iprod{w}{v} \text{ when } (q, v) \text{ satisfy } q \in D S(x|y)(v),
    \]
    Thus, if \eqref{eq:mordukhovich-qc-1} is satisfied, 
    Theorem \ref{thm:morduk} gives
    \[
        \begin{split}
        \lip{\inv S}(y|x) & =
            \sup_{\norm{w}\le 1} \sup\{ \norm{z} \mid z \in D^*{\inv S}(y|x)(w)\}.
            \\
            &
            =
            \sup_{\norm{w}\le 1} \sup\{
                \norm{z}
                \mid
                 \iprod{z}{q} \le \iprod{w}{v} \text{ when } q \in D S(x|y)(v)
                \}
        \\
         & \le
            \sup\{
                \norm{z}
                \mid
                 \iprod{z}{q} \le \norm{v} \text{ when } q \in D S(x|y)(v)
                \}
        \end{split}
    \]
    If \eqref{eq:mordukhovich-qc-1} is not satisfied,
    then by \eqref{eq:deriv-coderiv} the supremum in \eqref{eq:inv-lip-expr} is infinite.
    Thus \eqref{eq:inv-lip-expr} holds whether \eqref{eq:mordukhovich-qc-1} holds or not.
\end{proof}

\subsection{Bounds on Lipschitz factors}
\label{sec:lipbounds}

We now want to approximate the local Lipschitz factor $\lip{\inv H_{\realoptx}}(0|\realoptu)$ 
of $H_{\realoptx}^{-1}$ at $0$ for $\realoptu$. We apply Proposition \ref{proposition:inv-lip},
assuming that $H_{\realoptx}$ is graphically regular 
and $\graph H_{\realoptx}$ is locally closed at $(\realoptu, 0)$. Then
\[
    \lip{\inv H_{\realoptx}}(0|\realoptu) 
    \le
        \sup\{
            \norm{z}
            \mid
             \iprod{z}{q} \le \norm{v} \text{ when } q \in D H_{\realoptx}(\realoptu|0)(v)
            \}.
\]
Writing $u=(x, y)$ and $v=(\xi, \nu)$, we have
\begin{equation}
    \label{eq:dh-expand}
    D H_{\realoptx}(\realoptu|0)(v)
    = \begin{pmatrix}
            D G(\realoptx|-K_{\realoptx}^* \realopty)(\xi) + K_{\realoptx}^* \nu \\
            D F^*(\realopty|K_{\realoptx} \realoptx+c_{\realoptx})(\nu) - K_{\realoptx} \xi
      \end{pmatrix}.
\end{equation}
Suppose there exist self-adjoint linear maps $\barG: X \to X$ and $\barF: Y \to Y$
and (possibly trivial) subspaces $V_G \subset X$ and $V_F \subset Y$ such that
\begin{align}
    \notag
    D G(\realoptx|-K_{\realoptx}^* \realopty)(\xi) & =
    \begin{cases}
        \barG \xi + V_G^\perp, & \xi \in V_G \\
        \emptyset, & \xi \not\in V_G, \quad \text{ and }
    \end{cases}
    \\
    \notag
    D F^*(\realopty|K_{\realoptx} \realoptx+c_{\realoptx})(\nu)& =
    \begin{cases}
        \barF \nu + V_F^\perp, & \nu \in V_F, \\
        \emptyset, & \nu \not\in V_F.
    \end{cases}
\end{align}
Then
\begin{equation}
    \label{eq:dh-form}
    D H_{\realoptx}(\realoptu|0)(v)=
    \begin{cases}
        Av + V^\perp, & v \in V, \\
        \emptyset, & v \not\in V.
    \end{cases}
\end{equation}
for
\[
    A=
    \begin{pmatrix}
        \barG & K_{\realoptx}^* \\
        -K_{\realoptx} & \barF
    \end{pmatrix},
    \quad
    \text{and}
    \quad
    V=V_G \times V_F.
\]
With $P_V$ the orthogonal projection operator into $V$, 
this allows us to approximate
\begin{equation}
    \label{eq:aubin-approx-a}
    \begin{split}
    \lip{\inv H_{\realoptx}}(0|\realoptu) 
    &
    \le
        \sup\{
            \norm{z}
            \mid
             \iprod{z}{A v + p} \le \norm{v}
             \text{ when }
             v \in V,\, p \in V^\perp
            \}
    \\
    &
    =
        \sup\{
            \norm{z}
            \mid
             \norm{P_V A^* z} \le 1
             \text{ when }
             z \in V
            \}
    \\
    &
    =
        \sup\{
            \norm{P_V z}
            \mid
             \norm{P_V A^* P_V z} \le 1
            \}
    \\
    &
    \le
        \inf\{ c^{-1}
            \mid
            c \norm{P_V z} \le \norm{P_V A^* P_V z} \text{ for all } z
            \}.
    \end{split}
\end{equation}
In the following lemma, we show that $c > 0$. To do so, we have to assume 
various forms of boundedness from the involved operators. We introduce the operator
$\Xi$ as a way to make the estimate hold for a range of regularisation
parameters; the details of the procedure will follow the lemma in Section \ref{sec:l1reg}.

\def\gammax{{\tilde\gamma}}
\def\gammat{{\bar\gamma}}
\def\XiPF{\Xi_{V,F}}
\def\XiPG{\Xi_{V,G}}
\def\XiPF{\Xi_{V,F}}
\def\XiPG{\Xi_{V,G}}
\def\Fgamma{F_\gammat}
\def\Ggamma{G_\gammat}
\def\Fgammax{F_\gammax}

\begin{lemma}
    \label{lemma:liplemma}
    Let $\Xi: X \times Y \to X \times Y$ be a self-adjoint positive definite linear operator,
    $\Xi(x, y)=(\Xi_G(x), \Xi_F(y))$. Suppose that $\Xi_G$ commutes with $\barG$ and
    $P_{V_G}$, that $\Xi_F$ commutes with $\barF$ and $P_{V_F}$,
    and that one of the following conditions hold. 
    \begin{enumroman}
        \item
        \label{item:liplemma-gsc-fsc}
        $\barG \ge \gammat \Xi_G$ and $\barF \ge \gammat \Xi_F$ for some $\gammat > 0$.
        \item
        \label{item:liplemma-g0-fsc}
        $\barG=0$, $V_G=X$, $\Xi_G=I$ and $M \Xi_F \ge \barF \ge \gammat \Xi_F$ for some $M, \gammat > 0$,
        as well as
        \begin{equation}
            \label{eq:liplemma-nullspace-cond}
            \norm{P_{V_F} K_{\realoptx} \zeta} \ge a \norm{\zeta}, \quad (\zeta \in X).
        \end{equation}
    \end{enumroman}
    Then there exists a constant $c = c(M, \gammat, a)$, such that
    \begin{equation}
        \label{eq:liplemma-lower-bound}
        \norm{P_V A^* P_V z} \ge c \norm{\Xi P_V z}, \quad (z \in X \times Y).
    \end{equation}
\end{lemma}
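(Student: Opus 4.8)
The plan is to show that $A$, restricted to the subspace $V$ and weighted by $\Xi$, is bounded below, which is exactly \eqref{eq:liplemma-lower-bound}. Writing $w\defeq P_V z=(\xi,\nu)$ with $\xi\in V_G$, $\nu\in V_F$, and recalling that $A^*$ has diagonal blocks $\barG,\barF$ and off-diagonal blocks $\pm K_{\realoptx}$ (so that its off-diagonal part is skew-adjoint), I would first record the consequences of the commuting hypotheses. Since $\Xi_G$ commutes with $P_{V_G}$ and $\Xi_F$ with $P_{V_F}$, the weight $\Xi$ leaves $V$ invariant, so $\Xi w\in V$ and $\norm{\Xi w}^2=\norm{\Xi_G\xi}^2+\norm{\Xi_F\nu}^2$. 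Since $\Xi_G$ commutes with $\barG$ and $\Xi_F$ with $\barF$, the comparisons in \ref{item:liplemma-gsc-fsc} and \ref{item:liplemma-g0-fsc} can be read spectrally, yielding for instance $\iprod{\barF\nu}{\Xi_F\nu}\ge\gammat\norm{\Xi_F\nu}^2$ and, in case \ref{item:liplemma-g0-fsc}, $\norm{\barF\nu}\le M\norm{\Xi_F\nu}$. The lower bound will then be extracted by Cauchy--Schwarz against well-chosen test vectors $t\in V$, using $\norm{P_V A^* w}\ge\iprod{A^* w}{t}/\norm t$.

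For case \ref{item:liplemma-gsc-fsc}, where both diagonal blocks are coercive, I would test against the componentwise vectors $t=(\Xi_G\xi,0)$ and $t=(0,\Xi_F\nu)$. These reproduce the two weighted norms through the coercive terms $\iprod{\barG\xi}{\Xi_G\xi}\ge\gammat\norm{\Xi_G\xi}^2$ and $\iprod{\barF\nu}{\Xi_F\nu}\ge\gammat\norm{\Xi_F\nu}^2$, while the off-diagonal $K_{\realoptx}$ appears only through cross terms. Those cross terms I would bound by Young's inequality and absorb into the coercive diagonal contributions; combining the two estimates then produces $\norm{P_V A^* w}\ge c\,(\norm{\Xi_G\xi}+\norm{\Xi_F\nu})$, which is \eqref{eq:liplemma-lower-bound}.

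Case \ref{item:liplemma-g0-fsc} is the substantive one, since $\barG=0$ leaves no coercivity in the $\xi$-direction from the diagonal; it must be recovered from the injectivity hypothesis \eqref{eq:liplemma-nullspace-cond}. Here I would estimate the second component of $P_V A^* w$ directly, using the reverse triangle inequality, $\norm{P_{V_F}(\barF\nu+K_{\realoptx}\xi)}\ge\norm{P_{V_F}K_{\realoptx}\xi}-\norm{\barF\nu}\ge a\norm\xi-M\norm{\Xi_F\nu}$, so that $\norm\xi$ is controlled once $\norm{\Xi_F\nu}$ is small; and I would control $\norm{\Xi_F\nu}$ from the coercivity of $\barF$ via the test $t=(0,\Xi_F\nu)$, which gives $\norm{P_V A^* w}\ge\gammat\norm{\Xi_F\nu}-\norm{K_{\realoptx}\xi}$. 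A case split according to whether $a\norm\xi$ dominates $M\norm{\Xi_F\nu}$ (or a single Young combination of the two lower bounds with weights calibrated to $a,\gammat,M$) then delivers a joint bound $c\,(\norm\xi+\norm{\Xi_F\nu})\le\norm{P_V A^* w}$; since $\Xi_G=I$ and $V_G=X$, this is exactly \eqref{eq:liplemma-lower-bound}.

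The main obstacle throughout is the coupling of the two blocks through $K_{\realoptx}$, together with its interaction with the weight $\Xi$. In case \ref{item:liplemma-gsc-fsc} this coupling is only a nuisance, defused by the skew-adjoint structure and the coercivity of both diagonal blocks, but keeping the weight $\Xi$ \emph{exactly} (rather than forfeiting a factor measuring its conditioning) forces one to choose the test vectors so that the $K_{\realoptx}$-cross terms are genuinely absorbed by the coercive diagonal rather than merely dominated. In case \ref{item:liplemma-g0-fsc} the coupling is essential rather than incidental: the single component $P_{V_F}(\barF\nu+K_{\realoptx}\xi)$ must simultaneously supply coercivity in $\xi$ (through $a$) and tolerate the $\barF\nu$ contribution (through $M$), whereas $\norm{\Xi_F\nu}$ is recovered only through $\gammat$, so that extracting a clean positive $c=c(M,\gammat,a)$ hinges on balancing the three constants in the Young/case-split step. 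Before any of this I would dispatch the routine but necessary bookkeeping that $\barG$, $\barF$ and $\Xi$ are compatible with the splitting $V=V_G\times V_F$, so that the projected diagonal terms are precisely the coercive forms used above; this is guaranteed by the stated commuting relations.
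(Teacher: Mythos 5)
You have read the structure of the problem correctly (the skew--adjoint coupling, the role of the commutation hypotheses, the fact that \eqref{eq:liplemma-nullspace-cond} must supply the missing coercivity when $\barG=0$), and you even name the absorption of the $K_{\realoptx}$-cross terms as the main obstacle --- but the mechanism you propose does not overcome that obstacle. Every first-order test $\norm{P_VA^*P_Vz}\ge\iprod{A^*P_Vz}{t}/\norm{t}$ leaves a coupling error of size $\norm{K_{\realoptx}\xi}$ or $\norm{K_{\realoptx}^*\nu}$ that must be dominated by the coercive terms $\gammat\norm{\Xi_G\xi}$, $\gammat\norm{\Xi_F\nu}$, and the lemma assumes no relation whatsoever between $\norm{K_{\realoptx}}$ and $\gammat$, $M$, $a$, or the conditioning of $\Xi$ (which is precisely the quantity that degenerates in the intended application). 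In case \ref{item:liplemma-g0-fsc} the failure is provable: your two bounds are $\norm{P_VA^*P_Vz}\ge a\norm{\xi}-M\norm{\Xi_F\nu}$ and $\norm{P_VA^*P_Vz}\ge\gammat\norm{\Xi_F\nu}-\norm{K_{\realoptx}\xi}$, and any convex combination or case split extracts a positive multiple of $\norm{\xi}+\norm{\Xi_F\nu}$ only if $a\gammat>M\norm{K_{\realoptx}}$; but $M\Xi_F\ge\barF\ge\gammat\Xi_F$ forces $M\ge\gammat$, and \eqref{eq:liplemma-nullspace-cond} forces $\norm{K_{\realoptx}}\ge a$, so $M\norm{K_{\realoptx}}\ge\gammat a$ always and the required inequality is never satisfiable. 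In case \ref{item:liplemma-gsc-fsc} the same defect appears in milder form: the exact cancellation you invoke holds for $\iprod{A^*w}{w}$ but not for $\iprod{A^*w}{\Xi w}$ nor for the componentwise tests, because $\Xi_FK_{\realoptx}\ne K_{\realoptx}\Xi_G$ in general; the surviving cross terms can only be absorbed at the cost of a factor measuring the conditioning of $\Xi$, which is exactly the loss you say must be avoided.

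The missing idea is to estimate the \emph{squared} norm $\norm{P_VA^*P_Vz}^2$ as the sum of the two squared block components, which is what the paper does. That expansion makes the nonnegative quadratic terms $\norm{K_V\zeta}^2$ and $\norm{K_V^*\eta}^2$ (with $K_V\defeq P_{V_F}K_{\realoptx}P_{V_G}$) available as separate summands, and these are exactly what a first-order Cauchy--Schwarz bound never provides. In case \ref{item:liplemma-gsc-fsc} one writes $\barG_V=G_{\gammat}+\gammat\Xi_GP_{V_G}$ and $\barF_V=F_{\gammat}+\gammat\Xi_FP_{V_F}$, discards the completed squares involving $G_{\gammat}$, $F_{\gammat}$ and $K_V$, and the remaining $\gammat$-weighted cross terms cancel by skew-symmetry, yielding $c=\gammat$ with no dependence on $\norm{K_{\realoptx}}$. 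In case \ref{item:liplemma-g0-fsc} one splits off only a \emph{small} multiple $\tilde\gamma\in(0,\gammat/2)$ of $\Xi_F$ from $\barF_V$, so that its cross term with $K_V\zeta$ costs only $\tilde\gamma^2\norm{\zeta}^2+\norm{K_V^*\eta}^2$ --- absorbed by $(1-\mu)\norm{K_V\zeta}^2\ge(1-\mu)a^2\norm{\zeta}^2$ and by the first component --- while the remaining cross term is handled by Young against $\norm{K_V\zeta}^2$ and $\norm{F_{\tilde\gamma}\eta}^2\le M^2\norm{\Xi_FP_{V_F}\eta}^2$; choosing $\mu\upto1$ and $\tilde\gamma$ in an explicit window then makes both coefficients positive with $c=c(M,\gammat,a)$. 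Your argument would need to be rebuilt around this quadratic expansion; the linear test-vector estimates cannot be repaired.
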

\begin{proof}
    Let us write
    \[
        A_V \defeq P_V A^* P_V z
        =
        \begin{pmatrix}
            \barG_V & -K_{V}^* \\
            K_{V} & \barF_V
        \end{pmatrix}
    \]
    for $\barG_V \defeq P_{V_G} G P_{V_G}$, $\barF_V \defeq P_{V_F} F P_{V_F}$,
    and $K_{V} \defeq P_{V_F} K_{\realoptx} P_{V_G}$. Then $z=(\zeta, \eta)$ satisfies
    \[
        \begin{split}
        \norm{A_V z}^2
        & =
        \norm{\barG_V \zeta - K_{V}^* \eta}^2
        +
        \norm{\barF_V \eta + K_{V} \zeta}^2
        \\
        &
        =
        \norm{\barG_V \zeta}^2 + \norm{K_{V}^* \eta}^2
        - 2\iprod{\barG_V \zeta}{K_{V}^* \eta}
        + \norm{K_{V} \zeta}^2 + \norm{\barF_V \eta}^2
        + 2\iprod{K_{V} \zeta}{\barF_V \eta}
        \end{split}
    \]

    We consider point \ref{item:liplemma-gsc-fsc} first.
    We write $\barG_V = \Ggamma + \gammat \XiPG$ and $\barF_V = \Fgamma + \gammat \XiPF$
    for $\XiPF \defeq \Xi_F P_{V_F}$ and $\XiPG \defeq \Xi_G P_{V_G}$.
    Observe that the self-adjointness and positivity
    of $\XiPG$ and the commutativity with $\barG$ yield
    \[
        \iprod{\XiPG \zeta}{\Ggamma \zeta}
        =
        \iprod{\XiPG^{1/2} \zeta}{\Ggamma \XiPG^{1/2} \zeta}
        \ge 0.
    \]
    Therefore 
    \[
        \begin{split}
        \norm{G_V \zeta}^2 - 2\iprod{G_V \zeta}{K_{V}^* \eta} + \norm{K_{V}^* \eta}^2
            &
            =
            \norm{\Ggamma \zeta}^2
            +
            2\gammat \iprod{\XiPG \zeta}{\Ggamma \zeta}
            + \gammat^2 \norm{\XiPG \zeta}^2
            \\
            &
            \phantom{ = }
            -2\iprod{\Ggamma \zeta}{K_{V}^* \eta}
            -2\gammat\iprod{\zeta}{K_{V}^* \eta}
            + \norm{K_{V}^* \eta}^2
            \\
            &
            \ge
            \gammat^2 \norm{\XiPG \zeta}^2
            -2\gammat\iprod{\zeta}{K_{V}^* \eta}.
        \end{split}
    \]
    Analogously
    \[
        \norm{K_{V} \zeta}^2 
        + 2\iprod{K_{V} \zeta}{\barF_V \eta}
        + \norm{\barF_V \eta}^2
        \ge
        \gammat^2 \norm{\XiPF \eta}^2
        +2\gammat\iprod{K_{V}\zeta}{\eta},
    \]
    so that
    \[
        \norm{A_V z}^2
            \ge
            \bigl(
            \gammat^2 \norm{\XiPG \zeta}^2
            -2\gammat\iprod{\zeta}{K_{V}^* \eta}
            \bigr)
            +
            \bigl(
            \gammat^2 \norm{\XiPF \eta}^2
            +2\gammat\iprod{K_{V}\zeta}{\eta}
            \bigr)
            =
            \gammat^2 \norm{\Xi P_V z}^2.
    \]
    This shows that \eqref{eq:liplemma-lower-bound} holds with $c=\gammat$
    in case \ref{item:liplemma-gsc-fsc}.
    
    Consider then case \ref{item:liplemma-g0-fsc}.
    Now $P_{V_G}=I$, so $K_{V} = P_{V_F} K_{\realoptx}$ and $\XiPG=I$.
    Let us pick arbitrary $\gammax \in (0, \gammat/2)$.
    Then $\barF_V=\Fgammax + \gammax \XiPF$.
    Expanding, we have
    \begin{equation}
        \notag
        \begin{split}
        \norm{A_V z}^2
        &
        =
        \norm{K_{V}^* \eta}^2
        + \norm{K_{V} \zeta}^2 + \norm{\barF_V \eta}^2
        + 2\iprod{K_{V} \zeta}{\barF_V \eta}
        \\
        &
        =
        \norm{K_{V}^* \eta}^2
        + \norm{K_{V} \zeta}^2
        + \norm{\Fgammax \eta}^2
        \\
        &
        \phantom{=}\ 
        + 2\gammax\iprod{\XiPF \eta}{\Fgammax \eta}
        + \gammax^2\norm{\XiPF\eta}^2
        + 2\iprod{K_{V} \zeta}{\Fgammax \eta}
        + 2\gammax\iprod{K_{V} \zeta}{\eta}.
        \end{split}
    \end{equation}
    Using the Young's inequalities
    \[
        2\gammax\iprod{K_{V} \zeta}{\eta}
        \le \gammax^2\norm{\zeta}^2 + \norm{K_{V}^*\eta}^2,
    \]
    and
    \[
        2\iprod{K_{V} \zeta}{\Fgammax \eta}
        \le
        \mu\norm{K_{V} \zeta}^2 + (1/\mu)\norm{\Fgammax \eta}^2,
        \quad (\mu > 0),
    \]
    give
    \begin{equation}
        \label{eq:av-case2-est1}
        \norm{A_V z}^2
        \ge
        (1-\mu)\norm{K_{V} \zeta}^2
        + (1-1/\mu)\norm{\Fgammax \eta}^2
        + 2\gammax\iprod{\XiPF\eta}{\Fgammax \eta}
        + \gammax^2\norm{\XiPF\eta}^2
        - \gammax^2\norm{\zeta}^2.
    \end{equation}
    Observe, that the self-adjointness and positivity
    of $\XiPF$ and the commutativity with $\barF$ yield
    \[
        M \XiPF \ge \Fgammax \ge (\gamma/2) \XiPF,
    \]
    as well as
    \[
        \iprod{\XiPF \eta}{\Fgammax \eta}
        =
        \iprod{\XiPF^{1/2} \eta}{\Fgammax \XiPF^{1/2} \eta}
        \ge
        \frac{\gammax}{2} \norm{\XiPF\eta}^2.
    \]
    Therefore, using these estimates and \eqref{eq:liplemma-nullspace-cond}
    in \eqref{eq:av-case2-est1}, we obtain for $\mu < 1$ the estimate
    \begin{equation}
        \notag
        \norm{A_V z}^2
        \ge
        \bigl((1-\mu)a-\gammax^2\bigr)\norm{\zeta}^2
        + \bigl((1-1/\mu)M^2 + \gammax\gammat +\gammax^2\bigr)\norm{\XiPF\eta}^2.
    \end{equation}
    We require
    \[
        (1-\mu)a-\gammax^2 > 0
        \quad
        \text{and}
        \quad
        (1-1/\mu)M^2 + \gammax\gammat +\gammax^2 > 0.
    \]
    Solving for $\gammax$, we get the conditions
    \[
        \gammax < g_1(\mu) \defeq \sqrt{(1-\mu)a}
    \]
    and
    \[
        \gammax > g_2(\mu) \defeq \frac{-\gammat + \sqrt{\gammat^2 + 4(1/\mu-1)M^2}}{2}.
    \]
    We have $g_1(1)=g_2(1)=0$. Further $g_1(\mu)/g_2(\mu) \to \infty$ as $\mu \upto 1$
    (L'H{\^o}pital).
    Thus there exists $\mu < 1 $ and $\gammax$ 
    satisfying $g_2(\mu) < \gammax < g_1(\mu)$,
    and dependent only on $M$, $\gammat$, and $a$.
    The existence of $c > 0$ satisfying
    \[
        \norm{A_V z}^2
        \ge
        c^2\norm{\zeta}^2
        +
        c^2\norm{\XiPF\eta}^2
    \]
    follows. Since $\XiPG=I$, the proof is finished.
\end{proof}

\subsection{Regularisation functionals with $L^1$-type norms}
\label{sec:l1reg}

\def\alphaj{\alpha}
\def\baralpha{\bar\alpha}
\def\Bi{{\B(0, \alphaj)}}
\def\Balpha{{\B(0, \alpha)}}

Writing $Y=Y_\NL \times Y_\LIN$ and $y=(\yNL, \yLIN)$, 
we now restrict our attention to
\begin{equation}
    \label{eq:nl-l-split}
    F^*(y)=(F_\NL^*(\yNL), F_{\LIN,\alpha}^*(\yLIN)) 
    \quad
    \text{and}
    \quad
    K(x) = (T(x), \KL x),
\end{equation}
where the operator $\KL: X \to Y_\LIN$ is linear,
$\yLIN = (\yLIN_1, \ldots, \yLIN_N) \in \prod_{i=1}^N \R^{m_j}$, and
\begin{equation}
    \label{eq:flin-l1}
    F_{\LIN,\alpha}^*(\yLIN)=\sum_{i=1}^N
         \left(
         \delta_{\Bi}(\yLIN_j)
          + \frac{\gammalin}{2\alphaj} \norm{\yLIN_j}^2
         \right).
\end{equation}
for some $\gammalin \ge 0$ and $\alphaj > 0$.
Here $\B(0, \alphaj) \subset \R^{m_j}$ are closed Euclidean unit balls of radius $\alphaj$.
We assume that $T \in C^2(X; Y_\NL)$,
and the functionals $G \in C^2(X)$ and $F_\NL^* \in C^2(Y_\NL)$
to be strongly convex, however also allowing $G=0$.

\begin{example}[Total variation, $\TV$]
    \label{example:tv}
    Let $\Omega_d = \{0, \ldots, n_1-1\} \times \{0, \ldots, n_2-1\}$
    be a discrete domain, $f \in \R^m$, and
    $T: (\Omega_d \to \R) \to \R^m$ a possibly non-linear forward operator.
    We are interested in the total variation regularised reconstruction problem
    \begin{equation}
        \label{eq:gen-tv-regul}
        \min_v \frac{1}{2}\norm{f-T(v)}^2 + \alpha \TV(v).
    \end{equation}
    Denoting by
    \[
        \grad_d: (\Omega_d \to \R) \to (\Omega_d \to \R^2)
    \]
    a discrete gradient operator, we define the discrete total variation by
    \[
        \TV(v)
        \defeq \norm{\grad_d u}_{L^1(\Omega_d)}.
    \]
    We may then write
    \[
        \alpha \TV(v) = \max_\varphi \iprod{\KL u}{\varphi} - F_{\LIN,\alpha}^*(\varphi),
    \]
    where $\varphi: \Omega_d \to \R^2$, $\KL=\grad_d$, and
    \[
        F_{\LIN,\alpha}^*(\varphi)=\sum_{i=1}^{n_1} \sum_{j=1}^{n_2}
                \delta_{\Balpha}(\varphi(i,j))
            .
    \]
    A non-zero $\gammalin > 0$ in \eqref{eq:flin-l1} corresponds to Huber-regularisation
    of the $L^1$ norm, that is to the functional
    \[
        \TV_{\gammalin}(v)
        = \sum_{i=1}^{n_1} \sum_{j=1}^{n_2} \abs{\grad_d u(i, j)}_{\gammalin},
    \]
    where
    \[
        \abs{g}_\gamma = 
        \begin{cases}
            \norm{g} - \frac{\gamma}{2}, & \norm{g} \ge \gamma,
            \\
            \frac{1}{2\gamma}\norm{g}^2, & \norm{g} < \gamma.
        \end{cases}
    \]
\end{example}

\begin{example}[Second-order total generalised variation, $\TGV^2$]
    \label{example:tgv2}
    Let us now replace $\TV$ by $\TGV^2$ in \eqref{eq:gen-tv-regul}.
    This regularisation functional was introduced in \cite{bredies2009tgv}
    as a higher-order extension of $\TV$ that tends to avoid the stair-casing
    effect while still preserving edges.
    Parametrised by $\alphavec=(\beta,\alpha)$,
    it can according to \cite{sampta2011tgv}, see also \cite{l1tgv},
    be written as the differentiation cascade
    \[
        \TGV^2_\alphavec(v) =
        \min_w \alpha \norm{Dv-w} + \beta\norm{Ew}.
    \]
    for $E$ the symmetrised gradient.
    The parameter $\alpha$ is the conventional regularisation parameter, whereas the ratio
    $\beta/\alpha$ controls the smoothness of the solution. With a large ratio, one obtains results
    similar to TV, but as the ratio becomes smaller, $\TGV^2$ better reconstructs smooth
    features of the image.

    In the discrete setting, on the two-dimensional domain
    $\Omega_d = \{1, \ldots, n_1\} \times \{1, \ldots, n_2\}$, we may write
    \[
        \TGV^2_{\alphavec}(v) 
        = \min_w \max_{\varphi,\psi} \iprod{\KL (v, w)}{(\varphi, \psi)} - F_{\LIN,\alpha}^*(\varphi, \psi),
    \]
    where $w, \varphi: \Omega_d \to \R^2$, $\psi: \Omega_d \to \R^{2 \times 2}$ and
    \begin{equation}
        \label{eq:tgv-flin}
        F_{\LIN,\alpha}^*(\varphi)=\sum_{i=1}^{n_1} \sum_{j=1}^{n_2} 
                \delta_{\Balpha}(\varphi_{i,j})
            + \sum_{i=1}^{n_1} \sum_{j=1}^{n_2}
                \delta_{\Balpha}(\psi_{i,j}).
    \end{equation}
    The operator $\KL$ is defined by
    \[
        \KL(v, w)=(\grad_d u - w, (\beta/\alpha)E_d w),
        \quad
        E_d w = (\grad_d' w+(\grad_d' w)^T)/2,
    \]
    for $\grad_d': (\Omega_d \to \R^2) \to (\Omega_d \to \R^{2 \times 2})$ another discrete gradient
    operator. Instead of the ratio $\beta/\alpha$ in front of $E_d$, we
    could remove this and set the radii in \eqref{eq:tgv-flin}
    for $\psi_{i,j}$ to $\beta$. The present approach however
    makes the forthcoming analysis simpler.
\end{example}

We now wish to apply Lemma \ref{lemma:liplemma} to obtain bounds
on the Lipschitz factor of $\inv H_\realoptx$ at a solution 
$\realoptu=(\realoptx,\realopt\yNL,\realopt\yLIN)$ to $0 \in H_\realoptx(\realoptu)$.
From \eqref{eq:dh-expand}, we see that we have to
calculate $D(\subdiff F_{\LIN,\alpha}^*)(\realopt\yLIN|\KL \realoptx)$.
We begin by calculating $D(\subdiff \delta_\Balpha)$ and studying conditions
for the graphical regularity of $\subdiff \delta_\Balpha=N_{\Balpha}$.

\begin{lemma}
    \label{lemma:DsubdiffF}
    Let
    $f(y)=\delta_\Balpha(y)$, 
    ($y \in \R^m$),
    and pick $v \in \subdiff f(y)$. Then
    \begin{equation}
        \label{eq:d-subdiff-indicator}
        D(\subdiff f)(y|v)(w) = 
        \begin{cases}
            \norm{v} w / \alpha + \R y, 
             & \norm{y}=\alpha, \norm{v}>0, \iprod{y}{w}=0, \\
             [0, \infty) y, 
             & \norm{y}=\alpha, \norm{v}=0, \iprod{y}{w} \le 0,  \\
            0, & \norm{y} < \alpha \\
            \emptyset, & \text{otherwise}.
        \end{cases}
    \end{equation}
    Moreover $\subdiff f$ is graphically regular and $\graph \subdiff f$ is locally closed 
    at $(y, v)$ for $v \in \subdiff f(y)$ whenever
    either $\norm{v}>0$ or $\norm{y} < \alpha$.
\end{lemma}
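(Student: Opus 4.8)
The plan is to exploit the geometric characterisation recalled above: $\graph D S(x|y)$ is the tangent cone to $\graph S$ at $(x,y)$. Here $S=\subdiff f = N_{\B(0,\alpha)}$, whose graph splits into an ``interior'' piece and a ``boundary'' piece,
\[
    \graph \subdiff f = \bigl\{ (y',0) \mid \norm{y'} < \alpha \bigr\}
        \cup \bigl\{ (y', s'y') \mid \norm{y'}=\alpha,\ s' \ge 0 \bigr\}.
\]
Since $v \in \subdiff f(y)$, the base point $(y,v)$ lies in one of these pieces, and the whole computation reduces to identifying the contingent cone $T_{\graph \subdiff f}(y,v)$ by elementary geometry of the sphere $\norm{\cdot}=\alpha$ and reading off $D(\subdiff f)(y|v)(w)=\{z \mid (w,z) \in T_{\graph \subdiff f}(y,v)\}$.

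First I would treat the two non-degenerate cases, where the graph is locally a smooth manifold. If $\norm{y}<\alpha$, then in a small neighbourhood the graph agrees with the flat piece $\{(y',0)\}$, so its tangent cone is $\{(w,0) \mid w \in \R^m\}$ and hence $D(\subdiff f)(y|v)(w)=\{0\}$. If $\norm{y}=\alpha$ and $\norm{v}>0$, then $v=s y$ with $s=\norm{v}/\alpha>0$ and, locally, the graph is the image of the $C^\infty$ embedding $(y',s') \mapsto (y',s'y')$ on $\{\norm{y'}=\alpha\}\times(0,\infty)$; differentiating this parametrisation at $(y,s)$, with $\dot y = w \perp y$ and $\dot s = r \in \R$ free, gives the tangent vectors $(w,\, sw+ry)$. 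Reading off the derivative yields $D(\subdiff f)(y|v)(w)=\norm{v}w/\alpha + \R y$ for $\iprod{y}{w}=0$ and $\emptyset$ otherwise, as claimed. For the degenerate case $\norm{y}=\alpha$, $\norm{v}=0$, the two pieces meet, and I would compute the cone as the union of the limit directions coming from each branch: approaching through the interior forces $z=0$ with $\iprod{y}{w}\le 0$, while approaching along the boundary sphere forces $z=\sigma y$ with $\sigma \ge 0$ and $\iprod{y}{w}=0$; together these give $[0,\infty)y$ over the region $\iprod{y}{w}\le 0$ (the interior directions contributing the element $0$) and $\emptyset$ for $\iprod{y}{w}>0$.

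Local closedness is immediate in the two non-degenerate cases: choosing the neighbourhood small enough that it meets only the one relevant manifold piece, the intersection of $\graph \subdiff f$ with it is relatively closed. For graphical regularity I would argue that, in these same cases, the graph is a $C^1$ manifold near $(y,v)$, so the graphical derivative at a nearby graph point is exactly the tangent space there, and tangent spaces of a $C^1$ manifold vary continuously; consequently the inner limit defining $\regD(\subdiff f)(y|v)$ equals the tangent space at $(y,v)$, that is $\regD(\subdiff f)(y|v)=D(\subdiff f)(y|v)$. I expect the main obstacle to be the degenerate corner $\norm{y}=\alpha$, $\norm{v}=0$: there the two branches are tangent along $\{w \perp y\}$ but carry different second components, so the graph is not a manifold and graphical regularity genuinely fails; this is precisely why the regularity claim is stated only for $\norm{v}>0$ or $\norm{y}<\alpha$. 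The sign bookkeeping in the contingent cone at this corner, together with the verification that approaching tangentially from inside the ball is admissible (via an $o(\tau)$ correction pulling the points back into $\B(0,\alpha)$), are the delicate points of the argument.
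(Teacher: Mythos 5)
Your proof is correct, but it takes a genuinely different route from the paper's. You compute the graphical derivative directly as the contingent cone to $\graph \subdiff f$, using the explicit two-piece description of the graph of $N_{\B(0,\alpha)}$ (a flat interior piece and the smooth boundary branch $(y',s')\mapsto(y',s'y')$), whereas the paper goes through the second-order variational machinery of Rockafellar--Wets: it first evaluates the second-order epi-derivative $\mathrm{d}^2 f(y|v)$ via the constraint representation $\B(0,\alpha)=\{y: \norm{y}^2/2\le\alpha^2/2\}$ and \cite[13.17]{rockafellar-wets-va}, and then invokes \cite[13.40]{rockafellar-wets-va} (prox-regularity and twice epi-differentiability of convex lsc functions) to get $D(\subdiff f)(y|v)=\subdiff\bigl(\tfrac12\mathrm{d}^2 f(y|v)\bigr)$. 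Your approach is more elementary and self-contained for this particular $f$; the paper's buys a template that transfers to general smoothly-constrained indicator functions and delivers the degenerate corner case by formula rather than by hand. The regularity and closedness arguments are essentially the same in both treatments: local closedness is automatic (the paper cites maximal monotonicity of $\subdiff f$ for convex lsc $f$, while you localise to a single closed manifold piece, which also works), and graphical regularity follows because nearby graph points with $\norm{v^i}>0$ are forced onto the sphere, so the nearby graphical derivatives are tangent spaces of one smooth manifold and converge to the tangent space at $(y,v)$. One small observation your computation surfaces: in the degenerate case $\norm{y}=\alpha$, $\norm{v}=0$, the contingent cone gives $\{0\}$ for $\iprod{y}{w}<0$ and $[0,\infty)y$ only for $\iprod{y}{w}=0$; this is also what the paper's own $\subdiff h = N_{\{w:\iprod{y}{w}\le0\}}$ yields, so the displayed formula in the lemma is slightly loose there -- harmlessly, since that case is excluded from the regularity claim and never used downstream.
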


\begin{remark}
    \label{rem:strict-compl}
    The condition in the final statement can be seen as a form of \emph{strict complementarity},
    commonly found in the context of primal-dual interior point methods
    \cite{wright1997primal}.
\end{remark}

\begin{proof}
    We first show graphical regularity and local closedness assuming
    \eqref{eq:d-subdiff-indicator}. Indeed, local closedness of $\subdiff f$
    is a direct consequence of $f$ being convex and lower semicontinuous 
    \cite{rockafellar-convex-analysis}.
    With regard to graphical regularity, let $(y^i, v^i) \to (y, v)$.
    Then for large enough $i$, $\norm{v^i} > 0$. This forces $\norm{y^i}=\alpha$,
    because $v^i \not\in \{0\} = \subdiff f(y^i)$ if $\norm{y^i}<\alpha$.
    Consequently we get from \eqref{eq:d-subdiff-indicator} the expression
    \[
        D(\subdiff f)(y^i|v^i)(w^i) = \norm{v^i} w^i / \alpha + \R y^i
    \]
    for any large enough index $i$ and any $w^i$ with $\iprod{y^i}{w^i}=0$.
    Choosing $w$ with $\iprod{y}{w}=0$, and $z \in \norm{v} w / \alpha + \R y$,
    we can find $w^i \to w$ and $z^i \to z$ with $\iprod{y^i}{w^i}=0$
    and $z^i \in \norm{v^i} w^i / \alpha + \R y^i$. It is now immediate that
    \[
        \liminf_{i \to\infty} \graph D(\subdiff f)(y^i|v^i) \supset \graph D(\subdiff f)(y|v).
    \]
    The inclusion in the other direction is obvious from the definitions.
    This proves that $\regD(\subdiff f)(y|v) = D(\subdiff f)(y|v)$, i.e., graphical 
    regularity in the case $\norm{v}>0$. The case $\norm{y}<\alpha$ is trivial, because
    $\subdiff f(y^i)=\{0\}$ for any $y^i$ close enough to $y$.

    Let us now prove \eqref{eq:d-subdiff-indicator}.
    We do this by calculating the second-order subgradient $\mathrm{d}^2 f(y|v)$.
    In the present situation, writing
    \[
         C \defeq \Balpha = \{ x \in \R^m \mid g(y) \in D \},
         \quad
         D \defeq (-\infty, \alpha^2/2],
         \quad
         g(y)=\norm{y}^2/2,
    \]
    the latter is given by \cite[13.17]{rockafellar-wets-va} as
    \begin{equation}
        \label{eq:Dsubdiff-expr}
        \mathrm{d}^2 f(y|v)(w)=\delta_{K(y, v)}(w)
            + \max_{x \in X(y, v)} \iprod{w}{x \grad^2 g(y)w},
    \end{equation}
    provided the following constraint qualification is satisfied:
    \begin{equation}
        \label{eq:Dsubdiff-CQ}
        x \in N_D(g(y)),\ x\grad g(y)=0 \implies x=0.
    \end{equation}
    Here
    \[
        K(y, v) \defeq \{w \in \R^m \mid \grad g(y) w \in T_D(g(y)), \iprod{w}{v}=0\}
    \]
    is the normal cone to $N_C(y)$ at $v$, and
    \[
        X(y, v) \defeq \{
            x \in N_D(g(y)) \mid v - x \grad g(y) = 0 \}.
    \]
    
    The constraint qualification \eqref{eq:Dsubdiff-CQ} is trivially
    satisfied: if $0 \ne x \in N_D(g(y))$, then necessarily $\norm{y}=\alpha$,
    so that $x\grad g(y) = xy \ne 0$.
    We may thus proceed to expanding \eqref{eq:Dsubdiff-expr}.
    We find for $v \in N_C(y)$ that
    \[
        K(y, v) =
        \begin{cases}
            \{w \in \R^m \mid \iprod{y}{w} = 0 \}, & \norm{y}=\alpha, \norm{v}>0  \\
            \{w \in \R^m \mid \iprod{y}{w} \le 0 \}, & \norm{y}=\alpha, \norm{v}=0  \\
            \R^m, &  \norm{y} < \alpha,
        \end{cases}
    \]
    and
    \[
        X(y, v)
        =
        \norm{v}/\alpha.
    \]
    Thus for $v \in N_C(y)$ we have
    $\max_{x \in X(y, v)} \iprod{w}{x \grad^2 g(y)w}=\norm{v} \norm{w}^2/\alpha$.
    It follows from \eqref{eq:Dsubdiff-expr} that
    \begin{equation}
        \label{eq:Dsubdiff-expand}
        \mathrm{d}^2 f(y|v)(w)
        =
        \begin{cases}
            \norm{v} \norm{w}^2/\alpha, & \norm{y}=\alpha, \norm{v}>0, \iprod{y}{w} = 0, \\
            0, & \norm{y}=\alpha, \norm{v}=0, \iprod{y}{w} \le 0, \\
            0, & \norm{y}<\alpha. \\
            \emptyset, & \text{otherwise}.
        \end{cases}
    \end{equation}
    
    We still have to calculate $D(\subdiff f)(y|v)$. 
    Since $f$ is proper, convex, and lower semi-continuous, it is also prox-regular 
    and subdifferentiably continuous \cite[13.30]{rockafellar-wets-va} in the senses
    defined in \cite{rockafellar-wets-va}, that we introduce here by name just for
    the sake of binding various results from that book rigorously together.
    By \cite[13.17]{rockafellar-wets-va}, $f$ is also twice epi-differentiable. 
    It therefore follows from \cite[13.40]{rockafellar-wets-va} 
    that
    \[
        D(\subdiff f)(y|v) = \subdiff h
        \quad\text{for}\quad
        h \defeq \frac{1}{2}\mathrm{d}^2 f(y|v).
    \]
    Applying this to \eqref{eq:Dsubdiff-expand}, we easily calculate
    \eqref{eq:d-subdiff-indicator}.
\end{proof}

\begin{lemma}
    \label{lemma:l1-properties}
    If $\yLIN=(\yLIN_1, \ldots, \yLIN_N) \in Y_\LIN$ and 
    $v = (v_1, \ldots, v_N) \in \subdiff F_{\LIN,\alpha}(\yLIN)$
    satisfy
    \begin{equation}
        \label{eq:l1-properties-lincond}
        \text{either } \norm{\yLIN_j} < \alphaj\ \text{ or }\ \norm{v_j-\gamma \yLIN_j/\alphaj} > 0, \quad (j=1,\ldots,N),
    \end{equation}
    then $\subdiff F_{\LIN,\alpha}^*$ is graphically regular 
    and $\graph \subdiff F_{\LIN,\alpha}^*$ locally closed at $(\yLIN, v)$, 
    and
    \[
        D(\subdiff F_{\LIN,\alpha})(\yLIN|v)(w)=
        \begin{cases}
            A_{\LIN,\alpha}(\yLIN, v) w + V_{\LIN,\alpha}(\yLIN)^\perp, & w \in V_{\LIN,\alpha}(\yLIN), \\
            \emptyset, & w \not \in V_{\LIN,\alpha}(\yLIN).
        \end{cases}
    \]
    Here $A_{\LIN,\alpha}(\yLIN, v) \defeq (A'_{\alphaj}(\yLIN_1, v_1), \ldots, A'_{\alphaj}(\yLIN_N, v_N))$ 
    and $V_{\LIN,\alpha}(\yLIN) \defeq V_{\alphaj}^{m_i}(\yLIN_1) \times \cdots \times V_{\alphaj}^{m_N}(\yLIN_N)$ 
    with
    \[
        A'_{\alphaj}(\yLIN_j, v_j) w_j \defeq 
        \begin{cases}
            \frac{\norm{v_j-\gamma \yLIN_j/\alphaj}+\gammalin}{\alphaj} w_j, & \norm{\yLIN_j}=\alphaj, \\
            \frac{\gammalin}{\alphaj}w_j, & \norm{\yLIN_j} < \alphaj,
        \end{cases}
        \quad
        \text{and}
        \quad
        V_{\alphaj}^{m_j}(\yLIN_j) \defeq
        \begin{cases}
            (\R \yLIN_j)^\perp & \norm{\yLIN_j}=\alphaj, \\
            \R^{m_j} , & \norm{\yLIN_j} < \alphaj.
        \end{cases}
    \]
\end{lemma}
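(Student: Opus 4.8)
The plan is to reduce the statement to the pure-indicator case already settled in Lemma \ref{lemma:DsubdiffF}, exploiting two structural features of $F_{\LIN,\alpha}^*$: its separability across the blocks $j=1,\ldots,N$, and the fact that each block differs from an indicator only by the smooth quadratic $\frac{\gammalin}{2\alphaj}\norm{\freevar}^2$.

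First I would use separability. Since $F_{\LIN,\alpha}^*(\yLIN)=\sum_{j=1}^N f_j(\yLIN_j)$ with $f_j(\yLIN_j)\defeq\delta_{\Bi}(\yLIN_j)+\frac{\gammalin}{2\alphaj}\norm{\yLIN_j}^2$, the subgradient splits as $\subdiff F_{\LIN,\alpha}^*(\yLIN)=\prod_{j}\subdiff f_j(\yLIN_j)$, so $\graph\subdiff F_{\LIN,\alpha}^*$ is, after reordering coordinates, the product $\prod_j \graph\subdiff f_j$. Tangent cones together with inner and outer graphical limits all pass through finite Cartesian products, hence so do the graphical derivative, graphical regularity, and local closedness. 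It therefore suffices to prove the single-block claim: that $\subdiff f_j$ is graphically regular with locally closed graph at $(\yLIN_j,v_j)$, and that $D(\subdiff f_j)(\yLIN_j|v_j)(w)=A'_{\alphaj}(\yLIN_j,v_j)w+V_{\alphaj}^{m_j}(\yLIN_j)^\perp$ on $w\in V_{\alphaj}^{m_j}(\yLIN_j)$. Assembling the blocks then produces $A_{\LIN,\alpha}$, the product domain $V_{\LIN,\alpha}(\yLIN)$, and the additive term $V_{\LIN,\alpha}(\yLIN)^\perp$, since in each block $\R\yLIN_j=(V_{\alphaj}^{m_j}(\yLIN_j))^\perp$ when $\norm{\yLIN_j}=\alphaj$, while $\{0\}$ is the complement of $\R^{m_j}$ otherwise.

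Second, I would handle one block by peeling off the quadratic. Because $g(y)\defeq\frac{\gammalin}{2\alphaj}\norm{y}^2$ is $C^\infty$ with $\grad g(y)=\frac{\gammalin}{\alphaj}y$ and $\grad^2 g(y)=\frac{\gammalin}{\alphaj}I$, we have $\subdiff f_j=\subdiff\delta_{\Bi}+\frac{\gammalin}{\alphaj}I$, so $\graph\subdiff f_j$ is the image of $\graph\subdiff\delta_{\Bi}$ under the global linear isomorphism $\Phi(y,w)\defeq(y,\,w+\frac{\gammalin}{\alphaj}y)$ of $\R^{m_j}\times\R^{m_j}$. In particular $v_j\in\subdiff f_j(\yLIN_j)$ corresponds to $v_j'\defeq v_j-\frac{\gammalin}{\alphaj}\yLIN_j\in\subdiff\delta_{\Bi}(\yLIN_j)$, and hypothesis \eqref{eq:l1-properties-lincond} reads precisely ``$\norm{\yLIN_j}<\alphaj$ or $\norm{v_j'}>0$'', which is exactly the condition under which Lemma \ref{lemma:DsubdiffF} grants graphical regularity and local closedness of $\subdiff\delta_{\Bi}$ at $(\yLIN_j,v_j')$. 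Since $\Phi$ is a linear homeomorphism, it carries tangent cones by its own constant derivative $D\Phi(w,z)=(w,z+\frac{\gammalin}{\alphaj}w)$, so local closedness and the equality $\regD=D$ transfer verbatim from $\subdiff\delta_{\Bi}$ to $\subdiff f_j$, and the graphical derivative transforms as $D(\subdiff f_j)(\yLIN_j|v_j)(w)=D(\subdiff\delta_{\Bi})(\yLIN_j|v_j')(w)+\frac{\gammalin}{\alphaj}w$. Substituting the active branches of \eqref{eq:d-subdiff-indicator} at the shifted covector $v_j'$ and adding $\frac{\gammalin}{\alphaj}w$ collapses $\frac{\norm{v_j'}}{\alphaj}w$ into $\frac{\norm{v_j'}+\gammalin}{\alphaj}w$ on $\{w\mid\iprod{\yLIN_j}{w}=0\}$ when $\norm{\yLIN_j}=\alphaj$, and yields $\frac{\gammalin}{\alphaj}w$ on all of $\R^{m_j}$ when $\norm{\yLIN_j}<\alphaj$; this is exactly $A'_{\alphaj}(\yLIN_j,v_j)w+V_{\alphaj}^{m_j}(\yLIN_j)^\perp$.

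I expect the only genuine subtlety to be the transfer of graphical regularity under $\Phi$, i.e.\ the claim that $\regD(\subdiff f_j)=D(\subdiff f_j)$ follows from the corresponding equality for $\subdiff\delta_{\Bi}$; everything else is bookkeeping, but this step must be argued rather than asserted. Because $\Phi$ is a fixed linear isomorphism of the graph space, the inner limit of graphs defining $\regD$ and the outer limit defining $D$ are both carried by the single map $D\Phi=\Phi$, so their coincidence is preserved. Alternatively, one may bypass $\Phi$ and invoke the second-order sum rule for twice epi-differentiable, prox-regular, subdifferentially continuous functions from \cite{rockafellar-wets-va}, under which adding the $C^2$ term $g$ shifts $D(\subdiff f_j)$ by $\grad^2 g=\frac{\gammalin}{\alphaj}I$ and evaluates the indicator part at $v_j-\grad g(\yLIN_j)$, reproducing the same formula. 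In either route the condition \eqref{eq:l1-properties-lincond} enters solely to license the regularity clause of Lemma \ref{lemma:DsubdiffF} blockwise.
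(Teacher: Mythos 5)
Your proposal is correct and follows essentially the same route as the paper: split by separability across the blocks, peel off the smooth quadratic, and invoke Lemma \ref{lemma:DsubdiffF} at the shifted covector $v_j-\gamma\yLIN_j/\alphaj$, with \eqref{eq:l1-properties-lincond} licensing graphical regularity blockwise. The only cosmetic difference is that the paper justifies the quadratic-peeling step by citing the sum rule for graphical differentiation \cite[10.43]{rockafellar-wets-va}, whereas your primary argument uses the explicit graph-space isomorphism $\Phi$ --- a valid, slightly more self-contained variant that you yourself note is interchangeable with the sum-rule route.
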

\begin{proof}
    Let $g_j(\yLIN_j) \defeq \gamma/(2\alphaj)\norm{\yLIN_j}^2$.
    Then $F_{\LIN,\alpha}^*(\yLIN) = \sum_{i=1}^N f_j(\yLIN_j) + g_j(\yLIN_j)$,
    for $f_j=f$ as in Lemma \ref{lemma:DsubdiffF}.
    It follows that
    \[
        D(\subdiff F_{\LIN,\alpha}^*)(\yLIN|v)(w) = \prod_{i=1}^N D(\subdiff (f_j+g_j))(\yLIN_j|v_j)(w_j),
    \]
    and that $\subdiff F_{\LIN,\alpha}$ is graphically regular and the graph locally closed
    if $\subdiff (f_j+g_j)$ satisfies the same for each $i=1,\ldots,N$. 
    By sum rules for graphical differentiation \cite[10.43]{rockafellar-wets-va}, we have
    \[
        D(\subdiff (f_j+g_j))(\yLIN_j|v_j)(w_j)
        =
        D(\subdiff f_j)(\yLIN_j|v_j-\grad g_j(\yLIN_j))(w_j)
        + \grad^2 g(\yLIN_j).
    \]
    Moreover, since $g$ is twice continuously differentiable,
    $\subdiff (f_j+g_j)$ is graphically regular if $\subdiff f_j$ is.
    By Lemma \ref{lemma:DsubdiffF}, this is the case if 
    $\norm{\yLIN_j} < \alphaj$, or $\norm{v_j-\grad g_j(\yLIN_j)} > 0$.
    Minding that $\grad g_j(\yLIN_j)=\gamma \yLIN_j/\alphaj$,
    referring to Lemma \ref{lemma:DsubdiffF} once again for
    the expression of $D(\subdiff f_j)(\yLIN_j|v_j-\gamma\yLIN_j/\alphaj)(w_j)$,
    the claim of the present lemma follows.
\end{proof}

We now have the necessary results to bound $\lip{\inv H_\realoptx}(0|\realoptu)$.

\begin{proposition}
    \label{prop:aubin-l1}
    Suppose $F_\NL^*$ is twice continuously differentiable and strongly convex,
    and $G=0$ or $G$ is twice continuously differentiable and strongly convex.
    Define $F^*$ and $K$ by \eqref{eq:nl-l-split} for some $\alpha>0$.
    Suppose $0 \in H_\realoptx(\realoptu)$.
    If $\realopt\yLIN=(\realopt\yLIN_1, \ldots, \realopt\yLIN_N)$ and $\realoptx$ satisfy
    for some $a, b>0$ the conditions
    \begin{equation}
        \label{eq:aubin-l1-lincond}
        \gammalin(1-\norm{\realopt\yLIN_j}/\alphaj)
        +
        \norm{[\KL\realoptx]_j-\gamma\realopt\yLIN_j/\alphaj}
        > b,
        \quad
        (j=1,\ldots,N),
    \end{equation}
    and 
    \begin{equation}
        \label{eq:aubin-l1-nondeg}
        \norm{\grad K_\NL(\realoptx) \zeta}^2 + \norm{P_{V_{\LIN,\alpha}(\realopt\yLIN)} \KL \zeta}^2 \ge a^2 \norm{\zeta}^2.
    \end{equation}
    then $\inv H_\realoptx$ has the Aubin property at 0 for $\realoptu$.
    In particular, given $\baralpha > \alphaj$, ($j=1,\ldots,N$),
    there exists a constant $c=c(a, b+\gamma, \Rad, \baralpha)>0$ such that
    \begin{equation}
        \label{eq:lip-constraint-structure}
        \lip{\inv H_\realoptx}(0|\realoptu) \le \inv c.
    \end{equation}
\end{proposition}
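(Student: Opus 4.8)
The plan is to run the final statement through the reduction \eqref{eq:aubin-approx-a}, whose right-hand side is exactly what Lemma \ref{lemma:liplemma} estimates from below. The work therefore splits into two parts: (a) checking the regularity and structural hypotheses that place $D H_{\realoptx}(\realoptu|0)$ into the block form \eqref{eq:dh-form}, and (b) choosing the rescaling operator $\Xi$ so that the lower bound of Lemma \ref{lemma:liplemma} is uniform over the admissible regularisation parameters $\alphaj<\baralpha$, which is what yields the stated dependence of the constant.

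First I would verify graphical regularity and local closedness of $H_{\realoptx}$ at $(\realoptu,0)$, the standing hypothesis of Proposition \ref{proposition:inv-lip}. Since the terms $K_{\realoptx}^*y$, $-K_{\realoptx}x$ and $-c_{\realoptx}$ in $H_{\realoptx}$ are smooth, this reduces to the corresponding property of $\subdiff G$, $\subdiff F_\NL^*$ and $\subdiff F_{\LIN,\alpha}^*$. The first two are single-valued and $C^1$ under the $C^2$ hypotheses (and trivial when $G=0$), hence graphically regular, while for $F_{\LIN,\alpha}^*$ I would invoke Lemma \ref{lemma:l1-properties}, whose standing condition \eqref{eq:l1-properties-lincond} follows from \eqref{eq:aubin-l1-lincond} with $v_j=[\KL\realoptx]_j$: since $b>0$, for each $j$ either $\norm{\realopt\yLIN_j}<\alphaj$ or $\norm{v_j-\gamma\realopt\yLIN_j/\alphaj}>0$. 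The same lemma, combined with $\grad^2 F_\NL^*$, $\grad^2 G$ from \eqref{eq:dh-expand}, produces the form \eqref{eq:dh-form} with $\barG=\grad^2 G(\realoptx)$ (or $\barG=0$ when $G=0$), $V_G=X$, $\barF=\diag(\grad^2 F_\NL^*(\realopt\yNL),\,A_{\LIN,\alpha}(\realopt\yLIN,\KL\realoptx))$ and $V=X\times Y_\NL\times V_{\LIN,\alpha}(\realopt\yLIN)$.

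Next I would construct $\Xi$. On the $X$- and $Y_\NL$-blocks set $\Xi_G=I$ and $\Xi_F=I$, and on the $j$-th $\yLIN$-block set $\Xi_F=(1/\alphaj)I$; this is self-adjoint positive definite and, being scalar on each $\yLIN$-block, commutes with both $\barF$ and the projection $P_{V_{\LIN,\alpha}(\realopt\yLIN)}$, as Lemma \ref{lemma:liplemma} demands. The key computation is that, reading off the eigenvalues of $A'_{\alphaj}$ from Lemma \ref{lemma:l1-properties}, condition \eqref{eq:aubin-l1-lincond} forces $\barF\ge\tfrac12(b+\gammalin)\,\Xi_F$ on every block retained in $V_{\LIN,\alpha}$: on an active block the eigenvalue is $(\norm{v_j-\gamma\realopt\yLIN_j/\alphaj}+\gammalin)/\alphaj>(b+\gammalin)/\alphaj$, while on an inactive block $v_j-\gamma\realopt\yLIN_j/\alphaj=0$, so \eqref{eq:aubin-l1-lincond} reduces to $\gammalin(1-\norm{\realopt\yLIN_j}/\alphaj)>b$, whence $\gammalin>b$ and the eigenvalue $\gammalin/\alphaj$ still exceeds $\tfrac12(b+\gammalin)/\alphaj$ (when $\gammalin=0$ the inactive case is excluded outright). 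Together with the strong convexity of $F_\NL^*$ this gives $\barF\ge\gammat\,\Xi_F$ with $\gammat$ depending only on $b+\gammalin$ and the convexity modulus, and the matching upper bound $\barF\le M\,\Xi_F$ follows because $\norm{\KL\realoptx}$ is controlled through $\norm{\realoptx}\le\Rad/2$ and $\alphaj<\baralpha$.

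With these bounds I would apply Lemma \ref{lemma:liplemma}: case \ref{item:liplemma-gsc-fsc} when $G$ is strongly convex (so that $\barG\ge\gammat\Xi_G$ as well), and case \ref{item:liplemma-g0-fsc} when $G=0$ (so that $\barG=0$, $V_G=X$, $\Xi_G=I$, and the nullspace hypothesis \eqref{eq:liplemma-nullspace-cond} is precisely \eqref{eq:aubin-l1-nondeg}, since $P_{V_F}K_{\realoptx}\zeta=(\grad T(\realoptx)\zeta,\,P_{V_{\LIN,\alpha}(\realopt\yLIN)}\KL\zeta)$). This yields $\norm{P_V A^* P_V z}\ge c_0\norm{\Xi P_V z}$ with $c_0=c_0(M,\gammat,a)$. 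Since the eigenvalues of $\Xi$ are bounded below by $\min(1,1/\baralpha)$, I obtain $\norm{P_V A^* P_V z}\ge c\norm{P_V z}$, and feeding this into \eqref{eq:aubin-approx-a} gives $\lip{\inv H_\realoptx}(0|\realoptu)\le 1/c$ with $c=c(a,b+\gammalin,\Rad,\baralpha)$. The main obstacle is step (b): arranging $\Xi$ so that the $1/\alphaj$ scaling of $\barF$ is fully absorbed and the resulting modulus $\gammat$ is governed by $b+\gammalin$ uniformly in $\alphaj<\baralpha$, which is exactly where \eqref{eq:aubin-l1-lincond} must be exploited simultaneously as a lower bound on active blocks and as an exclusion of the degenerate inactive blocks.
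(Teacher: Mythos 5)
Your proposal is correct and follows essentially the same route as the paper's proof: reduce via Proposition \ref{proposition:inv-lip} and \eqref{eq:aubin-approx-a}, obtain the block form \eqref{eq:dh-form} from Lemma \ref{lemma:l1-properties} under \eqref{eq:aubin-l1-lincond}, introduce the same rescaling $\Xi$ with $1/\alphaj$ on the $\yLIN$-blocks, and invoke Lemma \ref{lemma:liplemma} (case \ref{item:liplemma-g0-fsc} with \eqref{eq:aubin-l1-nondeg} supplying \eqref{eq:liplemma-nullspace-cond}). You in fact spell out the eigenvalue bounds $M\Xi_F \ge \barF \ge \gammat\Xi_F$ and the active/inactive block dichotomy in more detail than the paper does.
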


\begin{proof}
    We calculate $D(\subdiff F_{\LIN,\alpha}^*)(\realopt\yLIN|\KL \realoptx)$
    using Lemma \ref{lemma:l1-properties}, whose condition \eqref{eq:l1-properties-lincond} 
    at $v=\KL \realoptx$ is guaranteed by \eqref{eq:aubin-l1-lincond}.
    Then we use \eqref{eq:dh-expand}, \eqref{eq:dh-form} to obtain
    \begin{align}
        \notag
        D H_{\realoptx}(\realoptu|0)(v) & =
        \begin{cases}
            A v + V^\perp, & v \in V, \\
            \emptyset, & v \not\in V,
        \end{cases}
        &
        A & =
        \begin{pmatrix}
            \barG & K_{\realoptx}^* \\
            K_{\realoptx} & \barF
        \end{pmatrix},
    \end{align}
    where
    \begin{align}
        \notag
        \barG
        & =
        \grad^2 G(\realoptx)
        &
        V & = X \times Y_\NL \times V_{\LIN,\alpha}(\realopt\yLIN),
        \\
        \notag
        \barF
        & =
        \begin{pmatrix}
            \grad^2 F_\NL^*(\realopt\yNL) & 0 \\
            0 & 
                A_{\LIN,\alpha}(\realopt\yLIN, \KL \realoptx)
        \end{pmatrix}
        &
        K_\realoptx
        & =
        \begin{pmatrix}
            \grad T(\realoptx) & 0 \\
            0 & \KL
        \end{pmatrix}.
    \end{align}
    Here $A_{\LIN,\alpha}$ and $V_{\LIN,\alpha}$ are given by Lemma \ref{prop:aubin-l1}.
    The lemma together with \eqref{eq:aubin-l1-lincond} also
    show that $\subdiff F_\LIN$ is graphically regular and $\graph \subdiff F_\LIN$ 
    is locally closed at $(\realopt\yLIN, \KL \realoptx)$.
    From our standing assumptions, both $F^*$ and $G$ are convex and lower semicontinuous. 
    Therefore $\graph \subdiff F$ and $\graph \subdiff G$ are locally closed.
    Moreover $\subdiff F_\NL^*$ and $\subdiff G$ are graphically 
    regular, $F_\NL^*$ and $G_\NL$ being twice continuously differentiable. 
    It follows that $H_\realoptx$ is graphically regular and $\graph H_\realoptx$ 
    is locally closed at $(0, \realoptu)$.
    
    Let us define
    \[
        \Xi_\alpha(x,\yNL,\yLIN)=(x, \Xi_{\alpha,F}(\yNL,\yLIN))
        \quad
        \text{with}
        \quad
        \Xi_{\alpha,F}(\yNL, \yLIN)=(\yNL, \yLIN_1/\alpha, \ldots, \yLIN_N/\alpha).
    \]
    The operator $\Xi_{\alpha,F}$ commutes with $\barF$ and $P_{V_F}$. Moreover,
    using \eqref{eq:aubin-l1-lincond} and the strong convexity of $F_\NL^*$, 
    we see that there exist $M=M(\Rad)>0$ and $\gammat=\gammat(\baralpha,b+\gamma)$ such that
    \[
        M \Xi_{\alpha,F} \ge \barF \ge \gammat \Xi_{\alpha,F}.
    \]
    The dependence of $M$ on $\Rad$ comes through $\sup_{\norm{\yNL}\le\Rad} \norm{\grad^2 F_\NL^*(\yNL)}$.
    Since commutativity with $G=0$ is automatic, and
    \eqref{eq:aubin-l1-nondeg} guarantees
    \eqref{eq:liplemma-nullspace-cond}, we may therefore apply 
    Lemma \ref{lemma:liplemma} to derive the bound
    \begin{equation}
        \notag
        \norm{P_V A^* P_V z} \ge c \norm{\Xi P_V z}, \quad (z \in X \times Y).
    \end{equation}
    Here the constant $c=c(M,\gammat,a)=c(a,b+\gamma,\Rad,\baralpha)$.
    Recalling \eqref{eq:aubin-approx-a}, this proves \eqref{eq:lip-constraint-structure}.
\end{proof}

\begin{remark}[Huber regularisation]
    Condition \eqref{eq:aubin-l1-lincond} is difficult to satisfy
    without Huber regularisation.
    Indeed, with $\gamma=0$, the condition becomes $\norm{[\KL\realoptx]_j} > 0$ 
    for each $i=1,\ldots,N$.
    In case of total variation regularisation in Example \ref{example:tv},
    this says that we cannot have $[\grad_d \realoptx]_j=0$; there can be no
    flat areas in the image $x$. This kind of requirement is, however,
    almost to be expected: The dual variable $\realopt\yLIN_j$, solving
    \[
        \max_{\yLIN_j \in \B(0, \alphaj)} \iprod{\yLIN_j}{[\KL\realoptx]_j}
    \]
    is not uniquely defined. Any small perturbation of $x$ can send it
    anywhere on the boundary $\BD \B(0, \alphaj)$.
    
    This oscillation is avoided by Huber regularisation., i.e., $\gamma > 0$.
    In this case optimal $\realopt\yLIN_j$ for $\realoptx$ solves
    \[
        \max_{\yLIN_j \in \B(0, \alphaj)}
        \iprod{\yLIN_j}{[\KL \realoptx]_j} + \gamma\norm{\yLIN_j}^2/(2\alphaj),
    \]
    If $\norm{[\KL\realoptx]_j}<\gamma$, necessarily $\norm{\realopt\yLIN_j} < \alphaj$.
    Clearly \eqref{eq:aubin-l1-lincond} follows.
    If, on the other hand, $\norm{[\KL\realoptx]_j}>\gamma$, necessarily
    $\norm{[\KL\realoptx]_j-\gamma\realopt\yLIN_j/\alphaj} > 0$.
    Thus \eqref{eq:aubin-l1-lincond} holds again.
    
    Even with $\gamma>0$, we do however have a problem when $\norm{[\KL\realoptx]_j}=\gamma$:
    the solution is not necessarily strictly complementary in the sense
    \eqref{eq:l1-properties-lincond}.
    A way to avoid this theoretical problem would be to replace the 2-norm cost
    in \eqref{eq:flin-l1} by a barrier function of $\Bi$. This would, however, cause
    the resolvent $\inv{(I+\sigma \subdiff F^*)}(y)$ to become very expensive to calculate.
    We therefore do not advise this in practise.
\end{remark}

\subsection{Squared $L^2$ cost functional with $L^1$-type regularisation}
\label{sec:l2l1reg}

We now make further assumptions on our problem, and limit ourselves
to reformulations of
\begin{equation}
    \label{eq:l22-l1}
    \min_x \frac{1}{2}\norm{f-T(x)}^2 + \alpha R(x).
\end{equation}
Here we assume that $T \in C^2(X; \R^m)$, and that the regularisation 
term $\alpha R(x)$ be for some linear operator $\KL: X \to Y_\LIN$ 
and $F^*_{\LIN,\alpha}$ as in \eqref{eq:flin-l1}, be written in the form
\[
    \alpha R(x)=\max_{\yLIN} \iprod{\yLIN}{\KL x} - F^*_{\LIN,\alpha}(\yLIN).
\]
This covers in particular Example \ref{example:tv} and Example \ref{example:tgv2}.

Setting
\[
    F_\NL^*(\yNL) \defeq \frac{1}{2}\norm{\yNL}^2 + \iprod{f}{\yNL}
\]
we may write
\[
    \frac{1}{2}\norm{f-T(x)}^2
    =\max_{\yNL}\ 
        \iprod{\yNL}{T(x)} - 
        F_\NL^*(\yNL).
\]
Observe that $F_\NL^*$ is strongly convex.
Thus with $y=(\yNL, \yLIN)$, we may reformulate \eqref{eq:l22-l1} 
in the saddle point form
\begin{equation}
    \label{eq:l22-l1-saddle}
    \min_x \max_y G(x) + \iprod{K(x)}{y} - F^*(y)
\end{equation}
for
\[
    G=0, \quad K(x)=(T(x), \KL x), \text{ and} \quad F^*(y)=(F_\NL^*(\yNL), F_{\LIN,\alpha}^*(\yLIN)).
\]

The optimality conditions \eqref{eq:nl-oc} presently expand to
$\realoptu_\alpha=(\realoptx_\alpha, \realopt\yNL_\alpha, \realopt\yLIN_\alpha)$
satisfying
\begin{subequations}
\label{eq:l1-l2-oc}
\begin{align}
    \label{eq:l1-l2-oc-1}
    [\grad T(\realoptx_\alpha)]^* \realopt\yNL_\alpha + \KL^* \realopt\yLIN_\alpha & = 0, \\
    \label{eq:l1-l2-oc-2}
    T(\realoptx_\alpha) - f & = \realopt\yNL_\alpha,
    \quad \text{and}\\
    \label{eq:l1-l2-oc-3}
    [\KL \realoptx_\alpha]_j - \gamma\realopt\yLIN_{\alpha,j}/\alpha & \in N_{\B(0, \alpha)}(\realopt\yLIN_{\alpha,j}),
    \quad (j=1,\ldots,N).
\end{align}
\end{subequations}
Since Proposition \ref{prop:aubin-l1} only shows the
Aubin property of $\inv H_{\realoptx_\alpha}$ without any guarantees
of smallness of the Lipschitz factor, we have 
to make $\norm{\Pnl \realopty_\alpha}$ small in order to satisfy
\eqref{eq:pnl-realopty-bound} for Theorem \ref{thm:conv}.
As $\Pnl \realopty_\alpha = \realopt\yNL_\alpha$, a solution
to \eqref{eq:l1-l2-oc} necessarily satisfies
\[
    \norm{\Pnl \realopty_\alpha}=\norm{f-T(\realoptx_\alpha)}.
\]
We will discuss how to make this small after the next proposition, 
rewriting Theorem \ref{thm:conv} for the present setting.

\begin{proposition}
    \label{prop:l1-l2-conv}
    Suppose $\realoptu_\alpha=(\realoptx_\alpha,\realopt\yNL_\alpha,\realopt\yLIN_\alpha)$ 
    satisfies the optimality conditions \eqref{eq:l1-l2-oc}, and for 
    some $a,b > 0$ the strict complementarity condition
    \begin{equation}
        \label{eq:strict-compl}
        \gammalin(1-\norm{\realopt\yLIN_{\alpha,j}}/\alpha)
        +
        \norm{[\KL\realoptx_\alpha]_j-\gamma\realopt\yLIN_{\alpha,j}/\alpha}
        > b,
        \quad
        (j=1,\ldots,N),
    \end{equation}
    as well as the non-degeneracy condition
    \begin{equation}
        \label{eq:nasty}
        \norm{\grad T(\realoptx_\alpha) \zeta}^2 + \norm{P_{V_{\LIN,\alpha}(\realopt\yLIN_\alpha)} \KL \zeta}^2 \ge a^2 \norm{\zeta}^2.
    \end{equation}
    Suppose, moreover, that $K$ and the step lengths $\tau,\sigma>0$ satisfy \eqref{eq:ass-k}.
    Pick $\bar \alpha>\alpha$, and let $c=c(a,b+\gamma,\Rad,\bar\alpha)$ 
    be given by Proposition \ref{prop:aubin-l1}.
    Then there exists $\delta_1 > 0$ such that Algorithm \ref{algorithm:nl-cp} applied 
    to the saddle point form \eqref{eq:l22-l1-saddle} of \eqref{eq:l22-l1}
    converges provided
    \[
        \norm{u^1-\realoptu_\alpha} \le \delta_1,
    \]
    and
    \begin{equation}
        \label{eq:l2-l1-conv-constant-cond}
        \norm{f-T(\realoptx_\alpha)} < \frac{1-1/\sqrt{1+c^2/(2\MMax^4)}}
                                     {\inv c \MCond L_2}.
    \end{equation}
\end{proposition}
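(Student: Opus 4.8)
The plan is to verify the hypotheses of Theorem \ref{thm:conv} — equivalently, of the underlying Theorem \ref{thm:descent} — for the saddle-point reformulation \eqref{eq:l22-l1-saddle}, in which $G=0$, $K(x)=(T(x),\KL x)$ and $F^*(y)=(F_\NL^*(\yNL),F_{\LIN,\alpha}^*(\yLIN))$. Several of these hypotheses are immediate: $\OverRelax=1$ is fixed throughout Section \ref{sec:detailed}, the step-length condition \eqref{eq:ass-k} is assumed, and \eqref{eq:ass-di} holds for Algorithm \ref{algorithm:nl-cp} by Lemma \ref{lemma:discrepancy-estimate} (the uniform bounds $\MMin^2I\le M_x\le\MMax^2I$ then coming from Lemma \ref{lemma:m}). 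Since the only non-linear block of $K$ is $T$ while $\KL$ is linear, the non-linear subspace $\Ynl$ is exactly the $\yNL$-component of $Y$, on which $F^*$ reduces to the strongly convex $F_\NL^*$; hence $F^*$ is strongly convex on $\Ynl$, as required. Finally, the optimality conditions \eqref{eq:l1-l2-oc} assumed in the statement are precisely $0\in H_{\realoptx_\alpha}(\realoptu_\alpha)$.

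It remains to supply the Aubin property together with the quantitative smallness condition \eqref{eq:pnl-realopty-bound}. For the former I would invoke Proposition \ref{prop:aubin-l1}: its two hypotheses \eqref{eq:aubin-l1-lincond} and \eqref{eq:aubin-l1-nondeg} are word-for-word the strict complementarity condition \eqref{eq:strict-compl} and the non-degeneracy condition \eqref{eq:nasty} assumed here. That proposition therefore yields that $\inv H_{\realoptx_\alpha}$ has the Aubin property at $0$ for $\realoptu_\alpha$, together with the explicit bound $\lipopt\le\inv c$ from \eqref{eq:lip-constraint-structure} for the constant $c=c(a,b+\gamma,\Rad,\bar\alpha)$.

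The crux is then to deduce \eqref{eq:pnl-realopty-bound} from the hypothesis \eqref{eq:l2-l1-conv-constant-cond}. First, reading off \eqref{eq:l1-l2-oc-2} gives $\Pnl\realopty_\alpha=\realopt\yNL_\alpha=T(\realoptx_\alpha)-f$, so that $\norm{\Pnl\realopty_\alpha}=\norm{f-T(\realoptx_\alpha)}$. The subtle point — and really the only place where care is needed — is that \eqref{eq:pnl-realopty-bound} involves $\lipopt$ itself, of which we only control an upper bound. I would resolve this by a monotonicity argument: the left-hand side $\ell\mapsto\ell\,\MCond L_2\norm{\Pnl\realopty_\alpha}$ is increasing in $\ell$, whereas the right-hand side $\ell\mapsto 1-1/\sqrt{1+1/(2\ell^2\MMax^4)}$ is decreasing in $\ell$. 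Hence substituting the upper bound $\lipopt\le\inv c$ into both sides replaces \eqref{eq:pnl-realopty-bound} by the strictly stronger sufficient condition
\[
    \inv c\,\MCond L_2\norm{f-T(\realoptx_\alpha)} < 1-1/\sqrt{1+c^2/(2\MMax^4)},
\]
which is exactly \eqref{eq:l2-l1-conv-constant-cond} after rearrangement. Chaining the inequalities (true left-hand side $\le$ substituted left-hand side $<$ substituted right-hand side $\le$ true right-hand side) then establishes \eqref{eq:pnl-realopty-bound}.

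With all hypotheses of Theorem \ref{thm:descent} in force, Theorem \ref{thm:conv} furnishes the radius $\delta_1>0$ and the guarantee that the iterates of Algorithm \ref{algorithm:nl-cp} applied to \eqref{eq:l22-l1-saddle} converge to a critical point whenever $\norm{u^1-\realoptu_\alpha}\le\delta_1$. The only genuine obstacle is the monotonicity bookkeeping just described, since one must simultaneously enlarge the increasing quantity and shrink the decreasing quantity while preserving a strict inequality; everything else is a direct application of the structural results of Sections \ref{sec:detailed} and \ref{sec:l1reg}.
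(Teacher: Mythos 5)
Your proposal is correct and follows essentially the same route as the paper: verify the hypotheses of Theorem \ref{thm:conv}, obtain the Aubin property and the bound $\lipopt \le \inv c$ from Proposition \ref{prop:aubin-l1} via \eqref{eq:strict-compl} and \eqref{eq:nasty}, identify $\norm{\Pnl\realopty_\alpha}$ with $\norm{f-T(\realoptx_\alpha)}$ through \eqref{eq:l1-l2-oc-2}, and deduce \eqref{eq:pnl-realopty-bound} from \eqref{eq:l2-l1-conv-constant-cond}. Your explicit monotonicity chaining in $\lipopt$ merely spells out the step the paper compresses into ``Condition \eqref{eq:l2-l1-conv-constant-cond} now implies \eqref{eq:pnl-realopty-bound}.''
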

\begin{proof}
    %
    We verify the assumptions of Theorem \ref{thm:conv}.
    By Lemma \ref{lemma:discrepancy-estimate}, it only remains to prove
    \eqref{eq:pnl-realopty-bound}.
    As \eqref{eq:strict-compl} and \eqref{eq:nasty} hold, Proposition \ref{prop:aubin-l1}
    shows that $\inv H_{\realoptx_\alpha}$ has the Aubin property at $0$ for $\realoptu_\alpha$
    with $\lip{\inv H_{\realoptx_\alpha}} \le \inv c$ for some constant $c>0$.
    Condition \eqref{eq:l2-l1-conv-constant-cond} now implies \eqref{eq:pnl-realopty-bound}. 
    The claim of the present proposition thus follows from Theorem \ref{thm:conv}.
\end{proof}

\begin{remark}[Non-degeneracy condition]
    The non-degeneracy condition \eqref{eq:nasty} may also be stated
    \[
        \grad T(\realoptx_\alpha) \zeta = 0 \text{ and } P_{V_{\LIN,\alpha}(\realopt\yLIN_\alpha)} \KL \zeta  = 0 \implies \zeta =0.
    \]
    Verifying this is easy if $\grad T(\realoptx_\alpha)$ has full range, but otherwise it 
    can be quite unwieldy thanks to the projection $P_{V_{\LIN,\alpha}(\realopt\yLIN_\alpha)}$. 
    Unfortunately, we have found no way to avoid this condition
    or \eqref{eq:l2-l1-conv-constant-cond}.
\end{remark}

We conclude our theoretical study with a simple exemplary result
on the satisfaction of \eqref{eq:l2-l1-conv-constant-cond}.
The problem with simply letting $\alpha \downto 0$ in 
order to get $\norm{f-T(\realoptx_\alpha)} \to 0$ is that the constant $c$
might blow up, depending on $\realoptu_\alpha$ through $a$ and $b$.
We therefore need to study the uniform satisfaction
of these conditions. 
In order to keep the present paper at a reasonable length, 
we limit ourselves to a very simple result that assumes the
existence of a convergent sequence $\{u_\alpha\}$ of solutions
to \eqref{eq:l1-l2-oc} as $\alpha \downto 0$. 
The entire topic of the existence of such a sequence merits
an independent study involving set-valued implicit function 
theorems (e.g.~\cite{dontchev1995implicit}) 
and source conditions on the data
(cf.,~e.g.~\cite{schuster2012regularization}).
Besides the existence of the minimising sequence,
we assume the existence of $x^*$ such that $f=T(x^*)$.
It is not difficult to formulate and prove equivalent results
for the noisy case, where we only have the bound
$\inf_x \norm{f-T(x)} \le \sigma$ for small $\sigma$.

\begin{proposition}
    \label{prop:lowalpha}
    Suppose $f=T(x^*)$ for some $x^* \in X$.
    Let $\realoptu_\alpha=(\realopt x_\alpha, \realopt\yNL_\alpha, \realopt\yLIN_\alpha)$ 
    solve \eqref{eq:l1-l2-oc} for $\alpha > 0$, and suppose that
    \begin{equation}
        \label{eq:lowalpha-conv}
        (\realopt x_\alpha, \realopt\yLIN_\alpha/\alpha) \to (\bar x, \bar \yLIN),
        \quad (\alpha \downto 0).
    \end{equation}
    If $\bar x$ and $\bar\yLIN$ satisfy for some $\bar a, \bar b>0$ 
    the strict complementarity condition
    \begin{equation}
        \label{eq:strict-complX}
        \gammalin(1-\norm{\bar\yLIN_j})
        +
        \norm{[\KL\bar x]_j-\gamma\bar\yLIN_j}
        > \bar b,
        \quad
        (j=1,\ldots,N),
    \end{equation}
    and the non-degeneracy condition
    \begin{equation}
        \label{eq:nastyX}
        \norm{\grad T(\bar x) \zeta}^2 + \norm{P_{V_{\LIN,\alpha}(\realopt\yLIN)} \KL \zeta}^2 \ge \bar a^2 \norm{\zeta}^2,
    \end{equation}
    then there exists $\alpha^* > 0$ such that
    \eqref{eq:strict-compl}--\eqref{eq:l2-l1-conv-constant-cond}
    hold for $\realoptu = \realoptu_\alpha$
    whenever $\alpha \in (0, \alpha^*)$.
    
    In consequence, there exist $\bar\alpha>0$ and $\delta_1 > 0$ such that 
    Algorithm \ref{algorithm:nl-cp} applied to the saddle point 
    form \eqref{eq:l22-l1-saddle} of \eqref{eq:l22-l1} converges 
    whenever $\alpha \in (0, \bar\alpha)$, and
    the initial iterate $u^1=(x^1, \yNL^1, \yLIN^1)$ satisfies
    \[
        \yNL^1=T(x^1)-f
        \quad
        \text{and}
        \quad
        \norm{(x^1, \yLIN^1/\alpha)-(\bar x, \bar \yLIN)} \le \delta_1.
    \]
\end{proposition}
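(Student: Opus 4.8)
The plan is to verify the hypotheses \eqref{eq:strict-compl}--\eqref{eq:l2-l1-conv-constant-cond} of Proposition \ref{prop:l1-l2-conv} for every small $\alpha$ with constants that stay bounded as $\alpha\downto 0$, and then to transfer the resulting convergence radius back to the scaled initialisation. First I fix a single $\Rad$: by \eqref{eq:lowalpha-conv} we have $\realopt x_\alpha\to\bar x$ and $\realopt\yLIN_\alpha=\alpha(\realopt\yLIN_\alpha/\alpha)\to 0$, while \eqref{eq:l1-l2-oc-2} and continuity of $T$ give $\realopt\yNL_\alpha=T(\realopt x_\alpha)-f\to T(\bar x)-f$; hence $\{\realoptu_\alpha\}$ is bounded and I may take $\Rad$ with $\norm{\realoptu_\alpha}\le\Rad/2$ for all small $\alpha$, fixing $L_2,\MMax,\MCond$ and a step-length pair satisfying \eqref{eq:ass-k} once and for all. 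For strict complementarity, the left-hand side of \eqref{eq:strict-compl} at $\realoptu_\alpha$ converges (using $\norm{\realopt\yLIN_{\alpha,j}}/\alpha=\norm{\realopt\yLIN_{\alpha,j}/\alpha}\to\norm{\bar\yLIN_j}$) to that of \eqref{eq:strict-complX}, which exceeds $\bar b$, so \eqref{eq:strict-compl} holds with the uniform choice $b\defeq\bar b/2$ for $\alpha$ small.

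The non-degeneracy \eqref{eq:nasty} is more delicate because of the projection $P_{V_{\LIN,\alpha}(\realopt\yLIN_\alpha)}$, whose subspace depends on the active set $\{j:\norm{\realopt\yLIN_{\alpha,j}}=\alpha\}$. I would show this set stabilises. If $\norm{\bar\yLIN_j}<1$, then $\norm{\realopt\yLIN_{\alpha,j}/\alpha}\to\norm{\bar\yLIN_j}<1$ forces $j$ inactive and the $j$-th block of the projection to be $I$. If $\norm{\bar\yLIN_j}=1$, then \eqref{eq:strict-complX} gives $\norm{[\KL\bar x]_j-\gamma\bar\yLIN_j}>\bar b>0$, so by \eqref{eq:l1-l2-oc-3} index $j$ must be active for small $\alpha$ (an inactive $j$ would force $[\KL\realopt x_\alpha]_j-\gamma\realopt\yLIN_{\alpha,j}/\alpha=0$, contradicting positivity), and the block equals $P_{(\R\realopt\yLIN_{\alpha,j})^\perp}\to P_{(\R\bar\yLIN_j)^\perp}$. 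Consequently $P_{V_{\LIN,\alpha}(\realopt\yLIN_\alpha)}\to P_{\bar V}$ in norm, where $\bar V$ is read off from the limiting active set; with $\grad T(\realopt x_\alpha)\to\grad T(\bar x)$ the quadratic form in \eqref{eq:nasty} then converges uniformly on the unit sphere to that in \eqref{eq:nastyX} (the projection there being understood as $P_{\bar V}$), and a compactness argument yields \eqref{eq:nasty} with the uniform constant $a\defeq\bar a/\sqrt 2$ for $\alpha$ small.

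With $a,b$ independent of $\alpha$, Proposition \ref{prop:aubin-l1} delivers the Aubin property of $\inv H_{\realopt x_\alpha}$ at $0$ for $\realoptu_\alpha$ with constant $c=c(a,b+\gamma,\Rad,\bar\alpha)$ that does not blow up, so $\lip{\inv H_{\realopt x_\alpha}}\le\inv c$ uniformly. For the residual in \eqref{eq:l2-l1-conv-constant-cond} I use \eqref{eq:l1-l2-oc-2} to write $\norm{f-T(\realopt x_\alpha)}=\norm{\realopt\yNL_\alpha}$, and since $\{\realopt x_\alpha\}$ is the minimising sequence while $f=T(x^*)$ is attainable, comparing energies gives $\tfrac12\norm{f-T(\realopt x_\alpha)}^2\le\tfrac12\norm{f-T(\realopt x_\alpha)}^2+\alpha R(\realopt x_\alpha)\le\alpha R(x^*)$, whence $\norm{f-T(\realopt x_\alpha)}\le\sqrt{2\alpha R(x^*)}\to 0$ (consistently forcing $T(\bar x)=f$). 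As the right-hand side of \eqref{eq:l2-l1-conv-constant-cond} is now a fixed positive number, there is $\alpha^*>0$ on which \eqref{eq:l2-l1-conv-constant-cond}, and hence all of \eqref{eq:strict-compl}--\eqref{eq:l2-l1-conv-constant-cond}, hold; equivalently \eqref{eq:pnl-realopty-bound} holds, giving the first assertion.

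For the convergence statement I apply Proposition \ref{prop:l1-l2-conv} on $(0,\bar\alpha)$ with $\bar\alpha\le\alpha^*$. Since every quantity feeding its convergence radius ($\Rad,\MMax,\MCond,L_2,c$, and the Aubin neighbourhood radii, all arising from the uniformly convergent data $\grad T(\realopt x_\alpha),P_{V_{\LIN,\alpha}(\realopt\yLIN_\alpha)}$) is uniform in $\alpha$, I may take a single radius $\delta_1^{\mathrm P}>0$. It remains to convert the scaled closeness: with $\yNL^1=T(x^1)-f$ one has $\norm{\yNL^1-\realopt\yNL_\alpha}=\norm{T(x^1)-T(\realopt x_\alpha)}\le\lip{T}\norm{x^1-\realopt x_\alpha}$, $\norm{x^1-\realopt x_\alpha}\le\delta_1+\norm{\bar x-\realopt x_\alpha}$, and $\norm{\yLIN^1-\realopt\yLIN_\alpha}\le\alpha(\delta_1+\norm{\bar\yLIN-\realopt\yLIN_\alpha/\alpha})$, so shrinking $\bar\alpha$ (to bound $\norm{\bar x-\realopt x_\alpha}$ and $\norm{\bar\yLIN-\realopt\yLIN_\alpha/\alpha}$ uniformly) and then $\delta_1$ forces $\norm{u^1-\realoptu_\alpha}\le\delta_1^{\mathrm P}$, and Proposition \ref{prop:l1-l2-conv} yields convergence. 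The main obstacle I anticipate is exactly the uniform non-degeneracy of the second paragraph, i.e.\ keeping the Aubin constant $c$ from blowing up: this rests entirely on the active-set stability and the ensuing norm-convergence $P_{V_{\LIN,\alpha}(\realopt\yLIN_\alpha)}\to P_{\bar V}$, where the strict complementarity \eqref{eq:strict-complX} and the optimality relation \eqref{eq:l1-l2-oc-3} must be combined; a secondary technical point is the $\alpha$-uniformity of $\delta_1^{\mathrm P}$ through the $\alpha$-dependent Aubin neighbourhoods.
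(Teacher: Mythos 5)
Your overall strategy coincides with the paper's: verify \eqref{eq:strict-compl}, \eqref{eq:nasty} and \eqref{eq:l2-l1-conv-constant-cond} with constants uniform in $\alpha$, invoke Proposition \ref{prop:aubin-l1} and Proposition \ref{prop:l1-l2-conv}, and transfer the convergence radius through the rescaling $(x,\yLIN)\mapsto(x,\yLIN_1/\alpha,\ldots,\yLIN_N/\alpha)$ exactly as the paper does with its operator $\Psi_\alpha$. Two sub-arguments differ. For the uniform non-degeneracy, the paper disposes of both \eqref{eq:strict-compl} and \eqref{eq:nasty} with a single compactness-and-contradiction sketch (sequences $\alpha^i\downto 0$, $\zeta^i\to\zeta$ violating the bounds would contradict \eqref{eq:strict-complX} or \eqref{eq:nastyX}); you instead prove that the active set $\{j:\norm{\realopt\yLIN_{\alpha,j}}=\alpha\}$ stabilises, using \eqref{eq:l1-l2-oc-3} together with \eqref{eq:strict-complX} to rule out inactivity when $\norm{\bar\yLIN_j}=1$, and deduce norm-convergence of the projections $P_{V_{\LIN,\alpha}(\realopt\yLIN_\alpha)}$. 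This is a genuine refinement: it makes explicit why the $\alpha$-dependent subspace in \eqref{eq:nasty} converges to the one implicitly meant in \eqref{eq:nastyX}, which the paper's one-line contradiction argument leaves to the reader, and it is the right way to see that the Aubin constant $c$ does not degenerate.

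The one step you should be more careful about is the residual bound. You derive $\norm{f-T(\realoptx_\alpha)}\le\sqrt{2\alpha R(x^*)}$ from the energy comparison $\tfrac12\norm{f-T(\realoptx_\alpha)}^2+\alpha R(\realoptx_\alpha)\le\alpha R(x^*)$, but the proposition only assumes that $\realoptu_\alpha$ solves the first-order system \eqref{eq:l1-l2-oc}; since the problem is non-convex, a critical point need not satisfy this inequality, so the comparison is not available under the stated hypotheses. (The remark immediately following the proposition in the paper draws exactly this distinction between critical points and minimisers.) The paper's own proof is admittedly terse here --- it asserts that $f=T(x^*)$ together with $\realoptx_\alpha\to\bar x$ makes $\norm{f-T(\realoptx_\alpha)}$ eventually smaller than the fixed threshold $\epsilon$ --- but it does not import global minimality to do so. If you wish to keep your quantitative bound you must either add minimality as a hypothesis or argue $T(\bar x)=f$ by other means; otherwise the rest of your argument, including the rescaled initialisation estimate at the end, matches the paper's.
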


\begin{remark}
    If each $\realoptx_\alpha$, ($\alpha>0$), solves
    the minimisation problem \eqref{eq:l22-l1}, instead of
    just the first-order optimality conditions \eqref{eq:l1-l2-oc},
    so that $\norm{f-T(\realoptx_\alpha)} \to 0$,
    then the limit $\bar x$ solves
    $$\min R(x) \quad\text{such that}\quad T(x) = f.$$
\end{remark}

\begin{proof}
    Let us begin by defining
    \[
        \Psi_{\alpha}(x, \yLIN)=(x, \yLIN_1/\alpha, \ldots, \yLIN_N/\alpha).
    \]
    Minding the renormalisation of $\yLIN$ through $\Psi_\alpha$
    in \eqref{eq:lowalpha-conv},
    the conditions \eqref{eq:strict-complX} and \eqref{eq:nastyX}
    arise in the limit from \eqref{eq:strict-compl} and \eqref{eq:nasty}.
    Indeed, if there were sequences $\alpha^i \downto 0$, $\zeta^i \to \zeta$,
    $\norm{\zeta} > 0$,
    and $\Psi_{\alpha^i}(\realoptx_{\alpha^i}, \realopt\yLIN_{\alpha^i}) \to (\bar x,\bar\yLIN)$,
    such hat \eqref{eq:strict-compl} or \eqref{eq:nasty} would not eventually hold 
    for $a=\bar a/2$ and $b=\bar b/2$, we would find a contradiction 
    to \eqref{eq:strict-complX} or \eqref{eq:nastyX}, respectively. We may thus pick
    $\alpha^{**}>0$ such that \eqref{eq:strict-compl} and \eqref{eq:nasty} hold
    for $a=\bar a/2$ and $b=\bar b/2$ and $\alpha \in (0, \alpha^{**})$.
    Denoting by $H_\alpha$ the operator $H_{\realoptx_\alpha}$
    for a specific choice of $\alpha$, Proposition \ref{prop:aubin-l1}  now shows that
    \[
        \lip{\inv H_\alpha}(0|\realopt u_\alpha) \le \inv c,
        \quad
        (\alpha \in (0, \alpha^{**})),
    \]
    for some $c=c(a,b+\gamma,\Rad,\alpha^{**})$ independent of $\alpha$ and $\realoptu_\alpha$.
    Observing that $\MCond$ and $\MMax$ are independent of $\alpha$
    (which is encoded into $F_{\LIN,\alpha}$ in our formulation), it follows that
    \eqref{eq:l2-l1-conv-constant-cond} holds if
    \[
        \norm{f-T(\realoptx_\alpha)} < \epsilon
        \defeq \frac{1-1/\sqrt{1+c^2/(2\MMax^4)}}{\inv c\MCond L_2}.
    \]
    Because $f=T(x^*)$, this can be achieved whenever $\alpha \in (0, \alpha^*)$ 
    for some $\alpha^* \in (0, \alpha^{**})$.

    Proposition \ref{prop:l1-l2-conv} now provides $\delta > 0$
    such that Algorithm \ref{algorithm:nl-cp} converges if
    \[
        \norm{\realoptu_\alpha-u^1} \le \delta.
    \]
    Setting $\yNL^1=T(x^1)-f$ and using $T \in C^2(X; \R^m)$, we have
    \[
        \norm{\realopt \yNL_\alpha-\yNL^1} 
        =
        \norm{T(\realoptx_\alpha)-T(x^1)} 
        \le
        \ell\norm{\realoptx_\alpha-x^1}
    \]
    for $\ell$ the Lipschitz factor of $T$ on a suitable 
    compact set around $\bar x$.
    Therefore, whenever $\alpha \in (0, \alpha^{*})$, we may
    estimate with $C=(1+\ell)\max\{1,\alpha^{*}\}$ that
    \[
        \begin{split}
        \norm{\realoptu_\alpha-u^1}
        &
        \le
        (1+\ell)
        \norm{(\realoptx_\alpha,\realopt\yLIN_\alpha)-(x^1,\yLIN^1)}
        \\
        &
        \le
        (1+\ell) \norm{\inv{\Psi_\alpha}} \norm{\Psi_\alpha((\realoptx_\alpha,\realopt\yLIN_\alpha)-(x^1,\yLIN^1))}
        \\
        &
        \le
        C\left(
        \norm{\Psi_\alpha(\realoptx_\alpha,\realopt\yLIN_\alpha)-(\bar x,\bar \yLIN)}
        +
        \norm{\Psi_\alpha(x^1, \yLIN^1)-(\bar x, \bar \yLIN)}\right).
        \end{split}
    \]
    By the convergence $\Psi_\alpha(\realopt x_\alpha, \realopt\yLIN_\alpha) \to (\bar x, \bar \yLIN)$,
    choosing $\bar \alpha \in (0, \alpha^{*})$ small enough, we can force
    \[
        C \norm{\Psi_\alpha(\realoptx_\alpha,\realopt\yLIN_\alpha)-(\bar x,\bar \yLIN)} \le \delta/2,
        \quad
        (\alpha \in (0, \bar \alpha)).
    \]
    Choosing $\delta_1=\delta/(2C)$, we may thus conclude the proof.
\end{proof}

\section{Applications and computational experience}
\label{sec:appl}

With convergence theoretically studied, and total variation type regularisation 
problems reformulated into the minimax form, we now move on to studying the 
numerical performance of NL-PDHGM. We do this by applying the method to two problems 
from magnetic resonance imaging: velocity imaging in Section \ref{sec:velmri},
and diffusion tensor imaging in Section \ref{sec:dti}.

\subsection{Phase reconstruction for velocity-encoded MRI}
\label{sec:velmri}

As our first application, we consider the phase reconstruction problem \eqref{eq:phase-recon}
for magnetic resonance velocity imaging. We are given a complex
sub-sampled $k$-space data $f=S\FF u^* +\nu \in \R^m$ corrupted by noise $\nu$.
Here $S$ is the sparse sampling operator, $\FF$ the discrete two-dimensional
Fourier transform, and $u^* \in L^1(\Omega_d; \C)$ the noise-free complex image in 
the discrete spatial domain $\Omega_d = \{0, \ldots, n-1\}^2$.
We seek to find a complex image $u=\rhoC \exp(i\varphi)$ approximating $u^*$.
Motivated by the discoveries in \cite{tuomov-phaserec}, we wish to regularise
$\rhoC$ and $\varphi$ separately. 
Introducing non-linearities into the reconstruction problem, 
we therefore define the forward operator
\[
    u = T(\rhoC, \varphi) \defeq S\FF[x \mapsto \rhoC(x)\exp(i\varphi(x))].
\]
For the phase $\varphi$, we choose to use second order total generalised
variation regularisation, and for the magnitude $\rhoC$, total variation
regularisation. Choosing regularisation parameters
$\alpha_\rhoC, \alpha_\varphi, \beta_\varphi > 0$ appropriate for
the data at hand, we then seek to solve
\begin{equation}
    \label{eq:vel-recon}
    \min_{\rhoC, \varphi \in L^1(\Omega_d)}
    \frac{1}{2}\norm{f-T(\rhoC, \varphi)}^2
    + \alpha_\rhoC \TV(\rhoC)
    + \TGV^2_{(\beta_\varphi, \alpha_\varphi)}(\varphi).
\end{equation}

For our experiments, due to trouble obtaining real data, we use a simple synthetic phantom
on $\Omega=[-1,1]^2$, depicted in Figure \ref{fig:velimg-res/velimgres-tau0.5sigma1.9-orig}.
It simulates the speed along the $y$ axis of a fluid rotating in a ring. 
Specifically
\[
    \rhoC(x, y)=\chi_{0.3 < \sqrt{x^2+y^2} < 0.9}(x, y),
    \quad
    \text{and}
    \quad
    \varphi(x,y)=x/\sqrt{x^2+y^2}.
\]

The image is discretised on a $n \times n$ grid with $n = 256$.
To the discrete Fourier-transformed $k$-space image, we add pointwise
Gaussian noise of standard deviation $\sigma=0.2$. For the sub-sampling operator
$S$, we choose a centrally distributed Gaussian sampling pattern with variance 
$0.15 \cdot 128$ and $15\%$ coverage of the $n \times n$ image in $k$-space.
For the regularisation parameters, which we do not claim to have chosen optimally in 
the present algorithmic paper, we choose $\alpha_\varphi=0.15 \cdot (2/n)$, $\beta_\varphi=0.20 \cdot (2/n)^2$, 
and $\alpha_\rhoC=2/n$. (The factors $2/n$ are related to spatial step 
size $1/n$ of the discrete differential on the $[-1, 1]^2$ domain. 
When the step size is omitted, i.e., implicitly taken as $1$, 
the factors disappear.)

We perform computations with Algorithm \ref{algorithm:nl-cp} (Exact NL-PDHGM),
Algorithm \ref{algorithm:nl-cp-lin} (Linearised NL-PDHGM), and the Gauss-Newton
method. As we recall, the latter is based on linearising the non-linear operator $T$ 
at the current iterate, solving the resulting convex problem, and repeating until
hopeful convergence. We solve each of the inner convex problems by PDHGM, \eqref{eq:cp-lin}.
We initialise each method with the backprojection
$(S\FF)^*f$ of the noisy sub-sampled data $f$. We use two different choices
for the PDHGM step length parameters $\sigma$ and $\tau$. The first one has equal
primal and dual parameters, both $\sigma, \tau=0.95/L$. 
Recalling Remark \ref{rem:steplength}, we update
$L = \sup_{k=1,\ldots,i} \norm{\grad K(x^k)}$ dynamically for NL-PDHGM,;
for linear PDHGM, used within Gauss-Newton, this simply reduces to $L=\norm{K}$. 
This choice of $\tau$ and $\sigma$ is somewhat na\"ive, and it has been observed 
that sometimes choosing $\sigma$ and $\tau$ in different proportions can improve the 
performance of PDHGM for linear operators \cite{pock2011iccv}. Therefore,
as our second step length parameter choice we use $\tau=0.5/L$ and $\sigma=1.9/L$.

We limit the number of PDHGM iterations (within each Gauss-Newton iteration) 
to 100000, and limit the number of Gauss-Newton iterations to 100.
As the primary stopping criterion for the NL-PDHGM methods, we use
$\norm{x^i-x^{i+1}}<\rho$ for $\rho = 1\ee^{-4}$. The same criterion is used 
to stop the outer Gauss-Newton iterations. 
As the stopping criterion of PDHGM within each Gauss-Newton iteration, we use 
the decrease of the pseudo-duality gap to less than $\rho_2 = 1\ee^{-3}=\rho/10$; 
higher accuracy than this would penalise the computational times of the
Gauss-Newton method too much in comparison to NL-PDHGM. Much lower accuracy
would almost reduce it to NL-PDHGM, if convergence would be observed at all; 
we will get back to this in our next application.

The pseudo-duality gap is discussed in detail in \cite{tuomov-dtireg}. 
We use use it to work around the fact that in reformulations of
the linearised problem into the form \eqref{eq:nonlinear-problem},
$G=0$. This causes the duality gap to be in practise infinite. 
This could be avoided if we could make $G$ to be non-zero, for example
by taking $G(x)=\delta_{B(0, M)}(x)$ for $M$ a bound on $x$. 
Often the primal variable $x$ can indeed be shown to be bounded. 
The problem is that the exact bound is not known. The idea of the pseudo-duality
gap, therefore, is to update the bound $M$ dynamically. Assuming it large
enough, it does not affect the PDHGM itself. It only affects the duality 
gap, and is updated whenever the duality gap is violated. 
For the artificially dynamically bounded problem the duality gap is
finite, and called the pseudo-duality gap. 

We perform the computations with OpenMP parallelisation on 6 cores
of an Intel Xeon E5-2630 CPU at 2.3GHz, with 64GB (that is, enough!)
random access memory available. The results are displayed 
in Table \ref{table:velimg} for the first choice of $\sigma$ and $\tau$,
and in Table \ref{table:velimg-unequal} for the second choice.
In the tables we report the number of PDHGM and Gauss-Newton iterations taken
along with the computational time in seconds, as well as the PSNR
of both the magnitude and the phase for the reconstructions.
For the calculation of the PSNR of $\varphi$, we have only included 
the ring $0.3 < \sqrt{x^2+y^2} < 0.9$, as the phase is meaningless
outside this, the magnitude being zero.
The results of the second parameter choice are also visualised in 
Figure \ref{fig:velimg}. It shows the original noise-free synthetic data, the
backprojection of the noisy sub-sampled data, and the obtained reconstructions, 
which have little difference between the methods, validating the results.
Also, because there is no difference Linearised and Exact NL-PDHGM, we
only display the results for the latter.

The main observation from Table \ref{table:velimg}, with equal $\tau$ and $\sigma$,
is that Exact NL-PDHGM, Algorithm \ref{algorithm:nl-cp}, is significantly faster
than Gauss-Newton, taking only around two minutes in comparison
to almost an hour for Gauss-Newton. Gauss-Newton nevertheless appears
to converge for the present problem, having taken 13 iterations
to each the stopping criterion. Another observation is that
although Linearised NL-PDHGM, Algorithm \ref{algorithm:nl-cp-lin},
takes exactly as many iterations as Exact NL-PDHGM, it requires
far more computational time, as the linearisation of $K$ is more
expensive to calculate than $K$ itself. Observing
Table \ref{table:velimg-unequal}, we see that choosing $\tau$ and $\sigma$,
in unequal proportion significantly improves the performance
of NL-PDHGM, taking less than a minute. In case of Gauss-Newton 
the improvement is merely marginal. Finally, studying Figure
\ref{fig:velimg}, we may verify that the method has converged
to a reasonable solution.

\begin{table}[p]
    \caption{Phase/magnitude reconstruction using
        Exact NL-PDHGM, 
        Linearised NL-PDHGM, 
        and the Gauss-Newton method: equal primal and dual step length parameters.
        For Gauss-Newton, PDHGM is used in the inner iterations,
        and the number of PDHGM iterations reported is the
        total over all Gauss-Newton iterations.
        The PSNR of $\varphi$ excludes the area outside the ring, where $\rhoC=0$.
        The stopping criterion for NL-PDHGM and Gauss-Newton is
        $\norm{x^i-x^{i+1}} < \rho$. The stopping
        criterion for linear PDHGM, used for inner Gauss-Newton iterations, 
        is pseudo-duality gap less than $\rho_2$.
    }
    \label{table:velimg}
    \centering
    
    $\rho=1\ee^{-4}$, $\rho_2=1\ee^{-3}$, $\tau=0.95/L$, $\sigma=0.95/L$ for $L = \sup_i \norm{\grad K(x^i)}$
    \medskip
    
    \begin{tabular}{l|rrrrr}
        Method & PDHGM iters. & GN iters. & Time & PSNR($\rhoC)$ & PSNR($\varphi$) \\
        \hline
        Backprojection & -- & -- & -- & 19.2 & 41.0\\
Exact NL-PDHGM & 15800 & -- & 129.1s & 25.5 & 52.7\\
Linearised NL-PDHGM & 15800 & -- & 171.2s & 25.5 & 52.7\\
Gauss-Newton & 137900 & 12 & 1353.7s & 25.5 & 53.9\\

    \end{tabular}

    \medskip
    
    \caption{Phase/magnitude reconstruction using
        Exact NL-PDHGM, 
        Linearised NL-PDHGM, 
        and the Gauss-Newton method: unequal primal and dual step length parameters.
        For Gauss-Newton, PDHGM is used in the inner iterations,
        and the number of PDHGM iterations reported is the
        total over all Gauss-Newton iterations.
        The PSNR of $\varphi$ excludes the area outside the ring, where $\rhoC=0$.
        The stopping criterion for NL-PDHGM and Gauss-Newton is
        $\norm{x^i-x^{i+1}} < \rho$. The stopping
        criterion for linear PDHGM, used for inner Gauss-Newton iterations, 
        is pseudo-duality gap less than $\rho_2$.
    }
    \label{table:velimg-unequal}
    \centering
    
    $\rho=1\ee^{-4}$, $\rho_2=1\ee^{-3}$, $\tau=0.5/L$, $\sigma=1.9/L$ for $L = \sup_i \norm{\grad K(x^i)}$
    \medskip
    
    \begin{tabular}{l|rrrrr}
        Method & PDHGM iters. & GN iters. & Time & PSNR($\rhoC)$ & PSNR($\varphi$) \\
        \hline
        Backprojection & -- & -- & -- & 19.2 & 41.0\\
Exact NL-PDHGM & 8200 & -- & 57.8s & 25.5 & 51.2\\
Linearised NL-PDHGM & 8200 & -- & 87.6s & 25.5 & 51.2\\
Gauss-Newton & 112145 & 12 & 1119.1s & 25.5 & 53.7\\

    \end{tabular}
\end{table}

\newlength{\resfigw}
\setlength{\resfigw}{0.23\textwidth}
\newlength{\colw}
\newlength{\w}

\newcommand{\resplotz}[3][]{%
    \setlength{\colw}{\resfigw}%
    \begin{subfigure}[t]{\colw}%
    \begin{tikzpicture}[outer sep=0pt,inner sep=0pt]%
        \pgftext[at=\pgfpoint{0}{\colw+3pt},left,bottom]{%
            \includegraphics[width=\colw]{{#2_abs}.png}
        }%
        \pgftext[at=\pgfpoint{0}{0},left,bottom]{%
            \includegraphics[width=\colw]{{#2_phase}.png}
        }%
        {#1}%
    \end{tikzpicture}%
    \caption{#3}
    \ifdefined\subfigprefix\label{\subfigprefix#2}\else\relax\fi%
    \end{subfigure}\relax%
    }

\begin{figure}[p]
    \centering
    \def\subfigprefix{fig:velimg-}
    \begin{minipage}[t]{0.94\textwidth}
    \vspace{0pt}
    \resplotz{res/velimgres-tau0.5sigma1.9-orig}{Original}
    \resplotz{res/velimgres-tau0.5sigma1.9-backproj}{Backprojection}
    \resplotz{res/velimgres-tau0.5sigma1.9-cpock-exact}{Exact NL-PDHGM}
    \resplotz{res/velimgres-tau0.5sigma1.9-gn}{Gauss-Newton}
    \end{minipage}%
    \hspace{2pt}%
    \begin{minipage}[t]{0.015\textwidth}
        \vspace{0pt}
        \setlength{\colw}{\resfigw}%
        \begin{tikzpicture}[outer sep=0pt,inner sep=0pt]%
            \node at (0, 2\colw+3pt) {};
            \node at (0, 0) {};
            \pgftext[at=\pgfpoint{0}{1.5\colw+3pt},left,center]{%
                \includegraphics[height=0.7\resfigw]{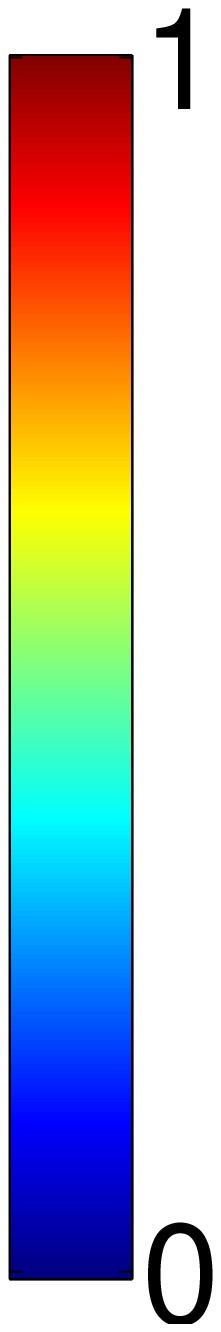}
            }%
            \pgftext[at=\pgfpoint{0}{.5\colw},left,center]{%
                \includegraphics[height=0.7\resfigw]{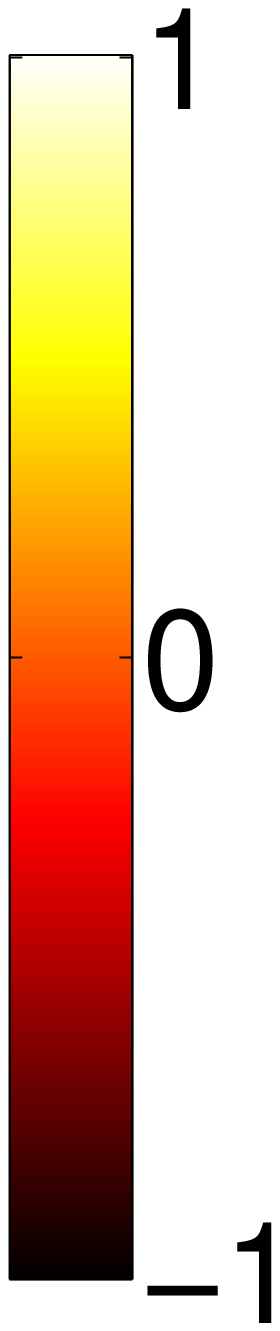}
            }%
        \end{tikzpicture}%
    \end{minipage}
    
    \caption{Non-linear phase/magnitude reconstruction using
        Exact NL-PDHGM 
        and the Gauss-Newton method. Also pictured is the original
        noise-free test data, and the backprojection of the noisy sub-sampled data.
        The upper image in each column is the magnitude $\rhoC$, and the lower
        image the phase $\varphi$. 
        }
    \label{fig:velimg}
\end{figure}

A full and fair comparison of the non-linear model \eqref{eq:vel-recon}
against the linear model \eqref{eq:phase-recon-lin} is out of the scope 
of the present paper. Especially, considering that we employed in 
\cite{tuomov-phaserec} Bregman iterations \cite{oshburgolxuyin} for 
contrast enhancement, without the introduction of a contrast enhancement
technique for the non-linear model as in \cite{bachmayr2009iterative},
it is not fair to directly compare against the results therein. 
Nevertheless, in order to justify the non-linear model, we have a simple 
comparison in Table \ref{table:linvelimg} and Figure \ref{fig:linvelimg}.
We computed solutions to the 
linear model for varying $\alpha$ in the interval $[0.01, 3.0]\cdot (2/n)$ 
(spacing $0.005$ below $\alpha=1.0\cdot(2/n)$, and $0.1$ above), $\beta=1.1\alpha\cdot(2/n)$, 
and chose the optimal $\hat \alpha$ by three different criteria. 
These Morozov's discrepancy principle \cite{Morozov}, i.e., the 
largest $\alpha$ such that $\norm{f-S\FF u_\alpha}$ is below the noise 
level, as well as the optimal PSNR for both the amplitude $\rhoC$ and 
phase $\varphi$. For the non-linear model, we chose the parameters
manually. We find this reasonable, because for multi-dimensional parameters 
we do not at this time have to our avail something practical like the discrepancy principle,
and a scan of the parameter space is not doable in practical applications.
Moreover, in order to show that the non-linear model improves over the linear model,
it is only necessary to pick parameters for the linear model optimally.
With this in mind, we specifically picked
$\alpha_\rhoC=1\cdot(2/n)$, $\alpha_\varphi=0.20\cdot(2/n)$, and 
$\beta_\varphi=1.1\alpha_\varphi\cdot(2/n)$ for the non-linear model.
We also increased the stopping threshold of NL-PDHGM to $\rho=1\ee^{-5}$
for better quality solutions. For the linear model, we set the pseudo-duality
gap threshold to $\rho_2=1\ee^{-4}$. This appears to be enough for the comparison,
keeping in mind that the stopping criteria are not comparable.
What we can draw from Table \ref{table:linvelimg} is that for most criteria,
the linear model fails to balance between PSNR($\varphi$) and PSNR($\rhoC$) 
as well as the non-linear model.
With the discrepancy principle as parameter choice criterion,
and $\TV$ as the regulariser, the linear model however beats 
the non-linear model in terms of both PSNRs.
Inspecting Figure \ref{fig:linvelimg-res/velimgres-lincompare_lin1}, we however observe extensive
stair-casing in both the amplitude and phase. This is avoided by the excellent reconstruction
by the non-linear model in Figure \ref{fig:linvelimg-res/velimgres-lincompare_nonlin}.
With the higher $\alpha$ in Figure \ref{fig:linvelimg-res/velimgres-lincompare_lin2},
the phase reconstruction is good even by the linear model, but the amplitude has become
smoothed out. This is also avoided by non-linear model.


\begin{table}[t]
    \caption{Results for the linear model \eqref{eq:phase-recon-lin} in comparison
            to the non-linear model \eqref{eq:vel-recon}.
            Optimal $\hat \alpha$ has been chosen by Morozov's
            discrepancy principle, and the PSNR for both the
            amplitude $\rhoC$ and phase $\varphi$. For each
            criterion the PSNRs and optimal $\hat \alpha$ are reported.
            In case of the non-linear model, $\TV$ regularisation is used for the amplitude,
            and $\TGV^2$ regularisation for the phase, with parameters
            $\alpha_\rhoC=1$, $\alpha_\varphi=0.20$, and $\beta_\varphi=1.1\alpha_\varphi$.
            }
    \label{table:linvelimg}
    \centering
    \begin{tabular}{llr|rrrr}
        Criterion & Regulariser & $\hat \alpha \cdot n/2$ & PSNR($\rhoC$) & PSNR($\varphi$) \\ 
        \hline
        Discrepancy principle & $\TV$ & 0.25 & 26.7 & 52.0 \\ 
        PSNR($\rhoC$) & $\TV$ & 0.20 & 26.7 & 51.1 \\ 
        PSNR($\varphi$) & $\TV$ & 1.25 & 22.1 & 57.8 \\ 
        \hline
        Discrepancy principle & $\TGV^2$ & 0.25 & 22.7 & 53.0  \\ 
        PSNR($\rhoC$) & $\TGV^2$ & 0.15 & 24.8 & 50.8  \\ 
        PSNR($\varphi$) & $\TGV^2$ & 2.60 & 17.8 & 57.3  \\ 
        \hline
        Manual  (non-linear model) & Mixed & &  25.2 & 54.0 \\
    \end{tabular}
\end{table}

\begin{figure}[t]
    \centering
    \begin{minipage}[t]{0.94\textwidth}
    \vspace{0pt}
    \def\subfigprefix{fig:linvelimg-}
    \resplotz{res/velimgres-tau0.5sigma1.9-orig}{Original}
    \resplotz{res/velimgres-lincompare_nonlin}{Non-linear}
    \resplotz{res/velimgres-lincompare_lin1}{Linear, $\TV$, $\alpha=0.25$}
    \resplotz{res/velimgres-lincompare_lin2}{Linear, $\TV$, $\alpha=1.25$}
    \end{minipage}%
    \hspace{2pt}%
    \begin{minipage}[t]{0.015\textwidth}
        \vspace{0pt}
        \setlength{\colw}{\resfigw}%
        \begin{tikzpicture}[outer sep=0pt,inner sep=0pt]%
            \node at (0, 2\colw+3pt) {};
            \node at (0, 0) {};
            \pgftext[at=\pgfpoint{0}{1.5\colw+3pt},left,center]{%
                \includegraphics[height=0.7\resfigw]{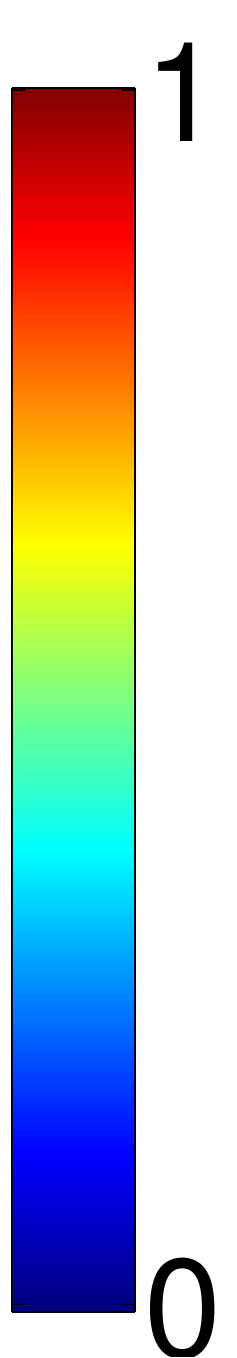}
            }%
            \pgftext[at=\pgfpoint{0}{.5\colw},left,center]{%
                \includegraphics[height=0.7\resfigw]{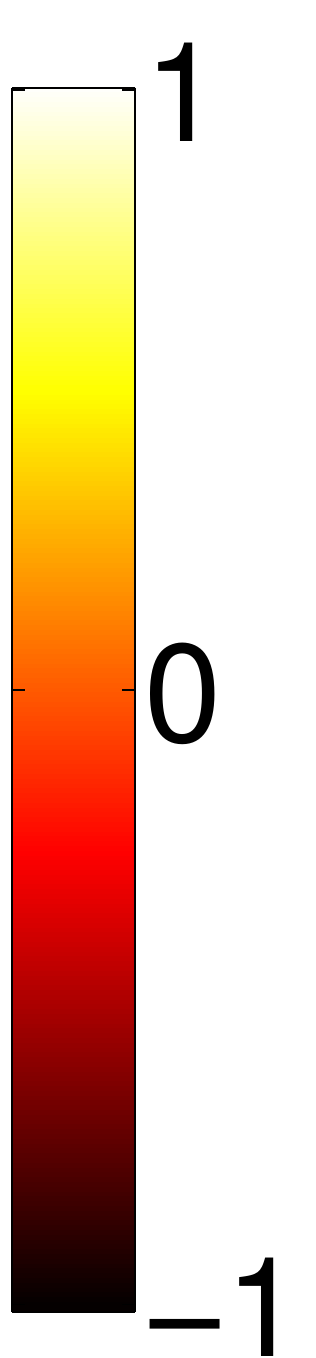}
            }%
        \end{tikzpicture}%
    \end{minipage}
    
    \caption{Phase/magnitude reconstruction using
        linear and non-linear model. Also pictured is the original
        noise-free test data; the backprojection of the noisy data may be 
        found in Figure \ref{fig:velimg-res/velimgres-tau0.5sigma1.9-backproj}.
        The upper image in each column is 
        the magnitude $\rhoC$, and the lower image the phase $\varphi$. 
        Only results for $\TV$ regularisation are shown for the linear model.
        }
    \label{fig:linvelimg}
\end{figure}

\subsection{Diffusion tensor imaging}
\label{sec:dti}

As a continuation of our earlier work on $\TGV^2$ denoising of diffusion tensor 
MRI (DTI) \cite{tuomov-dtireg,ipmsproc,escoproc} using linear models, 
we now consider an improved model. Here the purpose of the non-linear 
operator $T$ is to model the so-called Stejskal-Tanner equation
\begin{equation}
    \label{eq:stejskal-tanner}
    s_j(x)=s_0(x) \exp(\iprod{b_j}{v(x)b_j}),
    \quad (j=1,\ldots,N).
\end{equation}
Here $v: \Omega \to \Sym^2(\R^3)$, $\Omega \subset \R^3$ is a mapping to symmetric second 
order tensors (representable as symmetric $3 \times 3$ matrices). Each $v(x)$
models the covariance of a Gaussian probability distribution at $x$ for the
diffusion of water molecules. Each of the diffusion-weighted MRI 
measurements $s_j$, ($j=1,\ldots,N$), is obtained with a different 
non-zero diffusion sensitising gradient $b_j$, while $s_0$ is obtained
with a zero gradient. After correct the original $k$-space data for
coil sensitivities, each $s_j$ is assumed real. As a consequence, $s_j$ 
has in effect Rician noise distribution \cite{gudbjartsson1995rician}.

Our goal is to denoise $v$. We therefore consider
\begin{equation}
    \label{eq:dti-recons}
    \min_v \sum_{j=1}^N \frac{1}{2} \norm{s_j-T_j(v)}^2 + \TGV^2_{(\beta,\alpha)}(v),
\end{equation}
where
\[
    T_j(v)(x) \defeq s_0(x) \exp(\iprod{b_j}{v(x)b_j}),
    \quad
    (j=1,\ldots,N).
\]
Due to the Rician noise of $s_j$, the Gaussian noise model implied
by the $L^2$-norm is not entirely correct. However, the $L^2$ model 
is not necessary too inaccurate, as for suitable parameters the 
Rician distribution is not too far from a Gaussian distribution. 
For correct modelling, there would be approaches. One would be 
to modify the fidelity term to model Rician noise, as was done
in \cite{getreuer2011rician,martin2013tgvdti} for single MR images. 
The second option would be to include the coil sensitivities 
in an $L^2$ model, either by knowing them, or by estimating 
them simultaneously, as was done in \cite{knoll2012parallel} 
for parallel MRI. This kind of models with direct tensor
reconstruction will be the subject of a future study.
For the present work, we are content with the simple $L^2$ model,
which already presents computational challenges through the 
non-linearities of the Stejskal-Tanner equation.

As our test data set, we have an in vivo measurement of the human brain,
The data set is three-dimensional with 25 slices of size $128 \times 128$
for 21 different diffusion sensitising gradients, including the 
zero gradient. Moreover, 9 measurement were measured to construct
by averaging the ground truth depicted in Figure \ref{fig:nldtires-res/nldtires-tau0.5sigma1.9-f0}.
Only the first of the measurements is used for the backprojection 
in Figure \ref{fig:nldtires-res/nldtires-tau0.5sigma1.9-f}
and the reconstructions with \eqref{eq:dti-recons}.
It has about 8.5 million data points. 
The diffusion tensor image $v$ is correspondingly $128 \times 128 \times 25$ with
6 components for each symmetric tensor element $v(x) \in \R^{3 \times 3}$.
In addition we have the additional variable $w$, and dual variables.
This gives altogether 42 values per voxel in the reconstruction space,
or 17 million values. Considering the size of a double data type (8 bytes),
and the need for copies and temporary variables, the (C language) program 
solving this problem has a rather significant memory footprint of about 
one gigabyte.


As in Section \ref{sec:velmri}, we evaluate all three, Algorithm \ref{algorithm:nl-cp}
(Exact NL-PDHGM), Algorithm \ref{algorithm:nl-cp-lin} (Linearised NL-PDHGM), and the
Gauss-Newton method. The parametrisation and method setup is the same as in the previous
section, except for the step length parameter choice $\tau=\sigma=0.95/L$, we use the
lower accuracy $\rho=1\ee^{-3}$. For the choice $\tau=0.5/L$, $\sigma=1.9/L$ we use
$\rho=1\ee{-4}$ as before. Also, in both cases, in addition to $\rho_2=\rho/10$,
we perform Gauss-Newton computations with the higher accuracy $\rho_2=\rho$
for the inner PDHGM iterations.  The regularisation parameters are also naturally different.
We choose $\alpha=0.0006/\sqrt{128\cdot 128 \cdot 25}$, and $\beta=0.00066/(128\cdot 128 \cdot 25)$.

\setlength{\resfigw}{0.35\textwidth}

\newcommand{\resplotd}[3][]{%
    \setlength{\colw}{\resfigw}%
    \begin{subfigure}[b]{\colw}%
    \begin{tikzpicture}[outer sep=0pt,inner sep=0pt]%
        \pgftext[at=\pgfpoint{0}{0},left,bottom]{%
            \includegraphics[width=\colw]{{#2}.png}
        }%
        {#1}%
    \end{tikzpicture}%
    \caption{#3}
    \ifdefined\subfigprefix\label{\subfigprefix#2}\else\relax\fi%
    \end{subfigure}\relax%
    }
\newcommand{\resplotdz}[3][]{%
    \setlength{\colw}{.19\textwidth}%
    \begin{subfigure}[t]{\colw}%
    \begin{tikzpicture}[outer sep=0pt,inner sep=0pt]%
        \pgftext[at=\pgfpoint{0}{\colw+3pt},left,bottom]{%
            \includegraphics[width=\colw,bb=32 16 64 48,clip]{{#2}.png}
        }%
        {#1}%
    \end{tikzpicture}%
    \caption{#3}
    \ifdefined\subfigprefix\label{\subfigprefix#2}\else\relax\fi%
    \end{subfigure}\relax%
    }

\newlength{\scf}
\def\shadowshift{0.15ex}
\def\SQl{32}
\def\SQb{16}
\def\SQr{64}
\def\SQt{48}

\def\drawzoomsquare{%
        \setlength{\scf}{0.0078\colw} 
        \draw[line width=1.5, color=black, shift={(\shadowshift,-\shadowshift)}] (\SQl\scf, \SQb\scf) rectangle (\SQr\scf, \SQt\scf);%
        \draw[line width=1.5, color=white] (\SQl\scf, \SQb\scf) rectangle (\SQr\scf, \SQt\scf);%
    }
    
\begin{table}
    \caption{DTI reconstruction using
        Exact NL-PDHGM, 
        Linearised NL-PDHGM, 
        and the Gauss-Newton method: equal primal and dual step length parameters.
        For Gauss-Newton, PDHGM is used in the inner iterations,
        and the number of PDHGM iterations reported is the
        total over all Gauss-Newton iterations.
        The stopping criterion for NL-PDHGM and Gauss-Newton is
        $\norm{x^i-x^{i+1}} < \rho$. 
        The stopping criterion for linear PDHGM within Gauss-Newton iterations 
        is pseudo-duality gap less than $\rho_2$.
        The number of PDHGM iterations (per Gauss-Newton iteration) is limited to 
        100000, and the number of Gauss-Newton iterations to 100.
    }
    \label{table:nldti}
    \centering
    
    $\rho=1\ee^{-3}$, $\tau=0.95/L$, $\sigma=0.95/L$ for $L = \sup_i \norm{\grad K(x^i)}$
    \medskip
    
    \begin{tabular}{l|rrrrr}
        Method & PDHGM iters. & GN iters. & Time & PSNR \\
        \hline
        Backprojection & -- & -- & -- & 17.6\\
Exact NL-PDHGM & 4600 & -- & 1351.8s & 20.1\\
Linearised NL-PDHGM & 4600 & -- & 2068.9s & 20.1\\
Gauss-Newton; $\rho_2=\rho$ & 36600 & 6 & 8222.4s & 20.6\\
Gauss-Newton; $\rho_2=\rho/10$ & 5338 & 100 & 1384.8s & 21.9\\

    \end{tabular}

    \medskip
    
    \caption{DTI reconstruction using
        Exact NL-PDHGM, 
        Linearised NL-PDHGM, 
        and the Gauss-Newton method: primal step length smaller than dual.
        For Gauss-Newton, PDHGM is used in the inner iterations,
        and the number of PDHGM iterations reported is the
        total over all Gauss-Newton iterations.
        The stopping criterion for NL-PDHGM and Gauss-Newton is
        $\norm{x^i-x^{i+1}} < \rho$.
        The stopping criterion for linear PDHGM within Gauss-Newton iterations
        is pseudo-duality gap less than $\rho_2$.
        The number of PDHGM iterations (per Gauss-Newton iteration) is limited to 
        100000, and the number of Gauss-Newton iterations to 100.
    }
    \label{table:nldti-unequal}
    \centering
    
    $\rho=1\ee^{-4}$, $\tau=0.5/L$, $\sigma=1.9/L$ for $L = \sup_i \norm{\grad K(x^i)}$
    \medskip
    
    \begin{tabular}{l|rrrrr}
        Method & PDHGM iters. & GN iters. & Time & PSNR \\
        \hline
        Backprojection & -- & -- & -- & 17.6\\
Exact NL-PDHGM & 8700 & -- & 2381.9s & 20.3\\
Linearised NL-PDHGM & 8700 & -- & 3302.5s & 20.3\\
Gauss-Newton; $\rho_2=\rho$ & 77264 & 8 & 18679.1s & 20.4\\
Gauss-Newton; $\rho_2=\rho/10$ & 33601 & 100 & 8442.2s & 21.2\\

    \end{tabular}
\end{table}

\begin{figure}[t]
    \def\subfigprefix{fig:nldtires-}
    \begin{minipage}[t]{0.7\textwidth}
    \vspace{0pt}
    \resplotd[\drawzoomsquare]{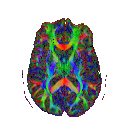}{Ground truth}
    \resplotd{res/nldtires-tau0.5sigma1.9-f}{Backprojection}
    \resplotd{res/nldtires-tau0.5sigma1.9-tgv2-nl-cpock-exact-unconstr}{Exact NL-PDHGM}
    \resplotd{res/nldtires-tau0.5sigma1.9-tgv2-nl-gn-unconstr}{Gauss-Newton}
    \end{minipage}
    \setlength{\colw}{0.25\textwidth}%
    \setlength{\w}{0.18\textwidth}%
    \begin{minipage}[t]{0.3\textwidth}
    \vspace{1cm}
    \def\legendstyle{\scriptsize}
    \input{legend-dirplot3d.tikz}
    \end{minipage}

    \caption{Visualisation of one slice of the DTI reconstruction using
        Exact NL-PDHGM 
        and the Gauss-Newton method. Also pictured is the ground truth
        and the backprojection reconstruction.
        The data displayed is colour-coded principal eigenvector.
        The area outside the brain has been masked out.
        The rectangle in (\subref{fig:nldtires-res/nldtires-tau0.5sigma1.9-f0})
        indicates the region displayed in Figure \ref{fig:nldti-lincompare-psnr} 
        and Figure \ref{fig:nldti-lincompare-discr}.
        }
    \label{fig:nldti}
\end{figure}

The results of the computations are reported in Table \ref{table:nldti},
Table \ref{table:nldti-unequal}, and Figure \ref{table:nldti}.
They confirm our observations in the velocity imaging example,
but we do have some convergence issues.
In case of Gauss-Newton, we do not observe convergence with accuracy
$\rho_2=\rho/10$ for the inner PDHGM iterations, and have to use
$\rho_2=\rho$. We find this quite interesting, since NL-PDHGM
gives better convergence results, and is essentially Gauss-Newton with
just a single step in the inner iteration.
Although not reported in the tables, we also observed that if using the more 
accurate stopping threshold $\rho = 1\ee^{-4}$, instead of $1\ee^{-3}$ for the 
computations in Table \ref{table:nldti}, with equal $\sigma$ and $\tau$, 
NL-PDHGM did not appear to convergence until reaching the maximum 
iteration count of 100000.
This may be due to starting too far from the solution, or due to
the fact that convergence of even the linear PDHGM can become very slow
in the limit. Nevertheless, with the stopping threshold $\rho = 1\ee^{-3}$,
we quickly obtained satisfactory solutions, as can be observed by
comparing the PSNRs between Table \ref{table:nldti} and
Table \ref{table:nldti-unequal}, as well as Figure \ref{table:nldti},
which visualises the results from the latter. Minding the large scale
of the problem, we also had reasonably quick convergence to a greater
accuracy with the unequal parameter choice. This gives us confidence 
for further application and study of NL-PDHGM in future work.

Finally, studying Table \ref{table:nldti}, the reader may observe that the PSNR of the 
unconverged solution produced by the Gauss-Newton method is better than the other 
solutions;  this is simply because we did not choose the 
regularisation parameters optimally, only being interested in studying 
convergence of the methods for the present paper. Therefore we also
do not compare the non-linear model completely rigorously against our
earlier efforts with linear models. This will be the topic of future research.
However, in order to justify the non-linear model, we have a simple comparison to
report in Table \ref{table:nldti-lincompare},
Figure \ref{fig:nldti-lincompare-psnr}, and Figure \ref{fig:nldti-lincompare-discr}.
In order to facilitate comparison, we have zoomed the plots into the rectangle indicated
in Figure \ref{fig:nldtires-res/nldtires-tau0.5sigma1.9-f0}.
We compare the model non-linear model \eqref{eq:dti-recons} against the linear
model of \cite{tuomov-dtireg} with fidelity term $\norm{f-u}^2$ in tensor space,
where we first solve the noisy tensor field $f$ by linear regression from 
\eqref{eq:stejskal-tanner}. Observe that we may linearise the equation by 
taking the logarithm on both sides. We also include in our comparison the model 
of \cite{ipmsproc} involving this linearisation in the fidelity term.
We calculated solutions to all of these models for
$\alpha \in [0.0002, 0.00110] / \sqrt{128 \cdot 128 \cdot 25}$, spacing $0.00005$, and picked again the
optimal $\hat \alpha$ by both the PSNR and the discrepancy principle.
Reading Table \ref{table:nldti-lincompare}, we see that the non-linear 
performs clearly the best when $\hat \alpha$ is chosen by the discrepancy 
principle. When $\hat \alpha$ is chosen by optimal PSNR, the tensor space 
linear model is better. 
However, studying Figure \ref{fig:nldti-lincompare-psnr-res/nldtires_lincompare_psnr-tgv2-ten-cpock-unconstr}, 
the result is significantly smoothed, reminding us that the PSNR is a poor quality criterion for
imaging problems. In case of selection of $\hat\alpha$ by the discrepancy principle
in Figure \ref{fig:nldti-lincompare-discr}, the situation is not so dire,
but still the linear models exhibit a level of smoothing.
The non-linear model \eqref{eq:dti-recons} performs visually significantly better.

\begin{table}
    \caption{DTI reconstruction using linear and non-linear models}
    \label{table:nldti-lincompare}
    
    \centering
    
    \begin{tabular}{ll|rr}
        Method & Criterion & $\hat \alpha \cdot \sqrt{128\cdot 128 \cdot 25}$ & PSNR \\
        \hline
        Linear, tensor space & Disrepancy principle & 0.00025 & 19.8 \\
Linear, DWI space & Disrepancy principle & 0.00030 & 18.5 \\
Non-linear & Disrepancy principle & 0.00020 & 21.1 \\

        \hline
        Linear, tensor space & PSNR & 0.00090 & 22.8 \\
Linear, DWI space & PSNR & 0.00110 & 19.7 \\
Non-linear & PSNR & 0.00020 & 21.1 \\

    \end{tabular}
\end{table}

\begin{figure}
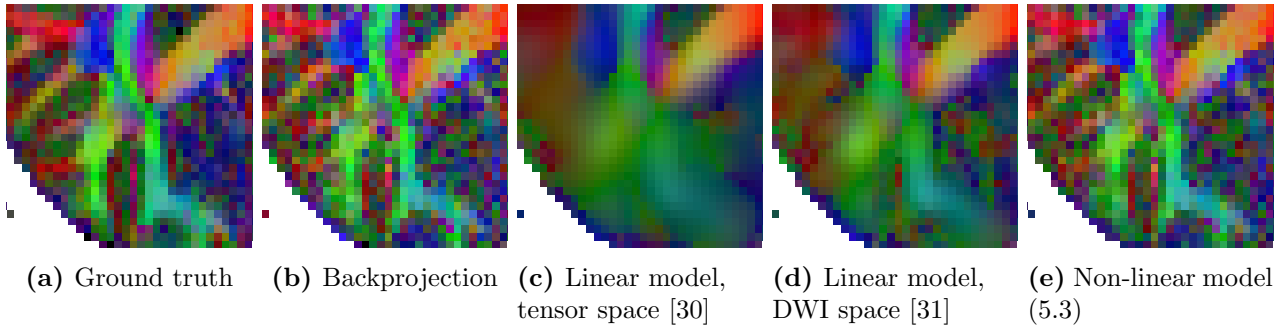

    \centering
    \def\subfigprefix{fig:nldti-lincompare-psnr-}
    \resplotdz{res/nldtires_lincompare_psnr-f0}{Ground truth}
    \resplotdz{res/nldtires_lincompare_psnr-f}{Backprojection}
    \resplotdz{res/nldtires_lincompare_psnr-tgv2-ten-cpock-unconstr}{Linear model,\\ tensor space \cite{tuomov-dtireg}}
    \resplotdz{res/nldtires_lincompare_psnr-tgv2-raw-cpock-unconstr}{Linear model,\\ DWI space \cite{ipmsproc}}
    \resplotdz{res/nldtires_lincompare_psnr-tgv2-nl-cpock-exact-unconstr}{Non-linear model \eqref{eq:dti-recons}}
    %
    %

    \caption{DTI reconstruction using linear and non-linear models, $\alpha$ selected by best PSNR.
        The region indicated by the rectangle in Figure \ref{fig:nldtires-res/nldtires-tau0.5sigma1.9-f0} is plotted.}
    \label{fig:nldti-lincompare-psnr}
\end{figure}

\begin{figure}[t]
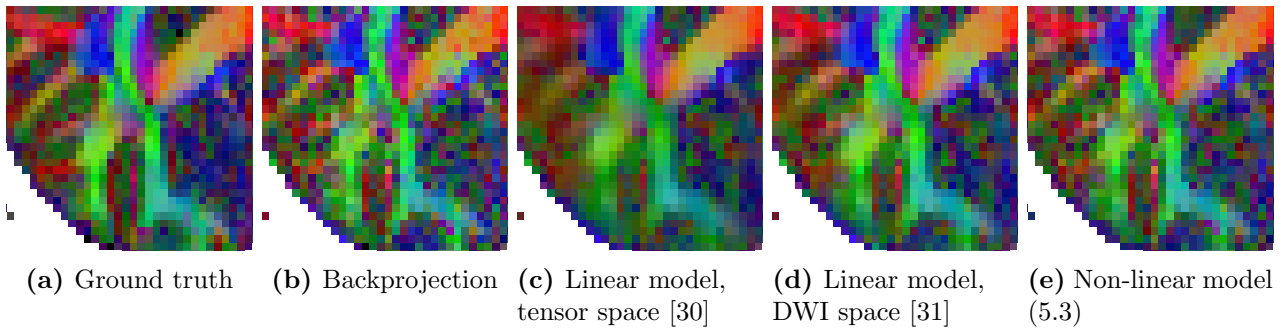

    \centering
    \def\subfigprefix{fig:nldti-lincompare-discr-}
    \resplotdz{res/nldtires_lincompare_discr-f0}{Ground truth}
    \resplotdz{res/nldtires_lincompare_discr-f}{Backprojection}
    \resplotdz{res/nldtires_lincompare_discr-tgv2-ten-cpock-unconstr}{Linear model,\\ tensor space \cite{tuomov-dtireg}}
    \resplotdz{res/nldtires_lincompare_discr-tgv2-raw-cpock-unconstr}{Linear model,\\ DWI space \cite{ipmsproc}}
    \resplotdz{res/nldtires_lincompare_discr-tgv2-nl-cpock-exact-unconstr}{Non-linear model \eqref{eq:dti-recons}}
    %
    %

    \caption{DTI reconstruction using linear and non-linear models, $\alpha$ selected by the discrepancy principle.
        The region indicated by the rectangle in Figure \ref{fig:nldtires-res/nldtires-tau0.5sigma1.9-f0} is plotted.}
    \label{fig:nldti-lincompare-discr}
\end{figure}

\def\shrinkspace{\vspace{-2.8ex}}

\shrinkspace
\section*{Acknowledgements}

This work has been financially supported by the King Abdullah University
of Science and Technology (KAUST) Award No.~KUK-I1-007-43 as well as the
EPSRC / Isaac Newton Trust Small Grant ``Non-smooth geometric 
reconstruction for high resolution MRI imaging of fluid transport
in bed reactors'' and the EPSRC first grant Nr.~EP/J009539/1
``Sparse \& Higher-order Image Restoration''.

\nobreak

The author is grateful to Florian Knoll for providing the in vivo DTI data set.

\shrinkspace
\section*{Software implementation}

Our C language implementation of the method for $\TV$ and $\TGV^2$ regularised
problems may be found under \url{http://iki.fi/tuomov/software/}.

\shrinkspace
\providecommand{\noopsort}[1]{}\providecommand{\homesiteprefix}{http://iki.fi/tuomov/mathematics}\providecommand{\eprint}[1]{\href{http://arxiv.org/abs/#1}{arXiv:#1}}

\end{document}